      \newcommand*{\ud}{\hspace{0.1cm}\mathrm{d}}
      \newcommand{\bR}{\mathbb{R}}
      \newcommand{\fm}{E}
      \newcommand{\es}{{S_\lambda}}
      \newcommand{\E}{{\cal E}}
      \newcommand{\m}{{\cal M}}
      \newcommand{\ct}{{\cal T}}
      \newcommand{\cl}{{\cal L}}
      \newcommand{\cc}{{\cal C}}
      \newcommand{\cq}{{\cal Q}}
      \newcommand{\F}{{\cal F}}
      \newcommand{\wt}{\widetilde}
   \def\nn{\nonumber}
   \def\ni{\noindent}
   \def\!{\mskip-0.6\thinmuskip}
   \newcommand{\snorm}[1]{
   \bgroup\left\vert#1\right\vert\egroup}
   \newcommand{\dnorm}[1]{
   \bgroup\left\vert\!\left\vert#1\right\vert\!\right\vert\egroup}
   \newcommand{\tnorm}[1]{
   \bgroup\left\vert\!\left\vert\!\left\vert#1\right\vert\!\right\vert\!\right\vert\egroup}
   \newcommand{\iprod}[2]{\bgroup\left<#1 , #2\right>\egroup}
   \def\nn{\nonumber}
   \def\ni{\noindent}
\begin{document}

\title{The nonlinear fractional diffusion equations with Nagumo-type sources and  perturbed orders
}


\author{Nguyen Minh Dien \and Erkan Nane \and and Dang Duc Trong
}


\institute{
              Nguyen Minh Dien
              \at
              Faculty of Natural Sciences,
              Thu Dau Mot University, Thu Dau Mot City, Binh Duong Province, Viet Nam
             \\
Department of Mathematics and Computer Science,
              University of Science, Vietnam National University HCMC, Ho Chi Minh City, Viet Nam
 \\
              \email{diennm@tdmu.edu.vn}
              \\
~
              \and
              Erkan Nane
              \at
              Department of  Mathematics and Statistics, Auburn University, Alabama 36849, USA
              \\
              \email{ezn0001@auburn.edu}
              ~
              \and
              Dang Duc Trong
              \at
              Department of Mathematics and Computer Science,
              University of Science, Vietnam National University HCMC, Ho Chi Minh City, Viet Nam
              \\
              \email{ddtrong@hcmus.edu.vn}
              }

\date{Received: date / Accepted: date}

\maketitle

\begin{abstract}
We consider a class of nonlinear fractional equations having
the Caputo fractional derivative of the time variable $t$, the fractional order of the self-adjoint positive definite unbounded operator in a Hilbert space and a singular nonlinear source.
These  equations are generalizations  of  some well--known fractional equation such as the  fractional
Cahn--Allen equation, the fractional Burger equation, the fractional Cahn--Hilliard equation,
the fractional Kuramoto--Sivashinsky equation, etc.
~
We study both the initial value and the final value problem.
~
Under some suitable assumptions, we investigate the existence, uniqueness of maximal solution, and stability of
solution of the problems with respect to perturbed fractional orders.
~
For $t=0$, we show that the final value problem is instable and deduce that the problem is ill-posed.
A regularization method is proposed to recover the initial data from the inexact fractional orders and the final data.
By some regularity assumptions of the exact solutions of the problems, we obtain an
error estimate of  H\"older type.

\keywords{Caputo derivative \and initial value problem \and final value problem \and existence and uniqueness}
 \subclass{26A33 \and 35R11 \and 35R30 \and 35A01 \and 35A02}
\end{abstract}

\section{Introduction}

\subsection{Statement of the problem}
Let $H$ be a Hilbert space,
$A : D(A) \subset H \to H$ be a self-adjoint positive definite unbounded operator and
$f: [0, +\infty) \times \mathcal{B} \to L^2((0, \infty), H)$ with $\mathcal{B}\subset H$.
For $\alpha \in (0, 1]$, $\beta>0$,
we consider the problem of finding a function $u: [0,T]\to H$ satisfying
~
\begin{equation}\label{main}
D_t^\alpha u  +   A^\beta u
=
f(t, u(t)), \, \, t > 0,
\end{equation}
~
~
where
 $  D_t^\alpha$
is the Caputo fractional derivative
\[
D_t^\alpha u (t)
=
\left\{
\begin{array}{lll}
 &
     \frac{1}{\Gamma(1-\alpha)}
     \int_0^t (t-s)^{-\alpha}u'(s) \ud s,
 &
     \textrm{if} \, \, \alpha\in (0, 1),
    \\
  &
     u'(t),
  &
    \textrm{if} \, \, \alpha=1,
\end{array}
\right.
\]
and the power of the operator $A^\beta$ will be defined later.
The equation \eqref{main} is a general form of a lot of well-known equations as
the  Ginzburg--Landau equation ($ \alpha=1, A=-\Delta, f(t,u)=au-bu^3$),  Burger equation
($ \alpha=1, A=-\Delta,  f(t,u)=u u_x$),  Kuramoto--Sivashinsky equation ($ \alpha=1, A=\Delta^2,  f(t,u)=\nabla
^2 u + (1/2)\| \nabla u \|^2)$).
In the present paper, we will investigate the stability of solution of the initial value  and the final value problems of \eqref{main}.
As known, the equation (\ref{main}) subjects to the initial data
\begin{equation}
\label{initial value}
  u(0)=\zeta
\end{equation}
is called the fractional initial value (or the Cauchy, the forward) problem (FIVP)
and the problem (\ref{main}) subject to the final data
\begin{equation}
\label{final value}
  u(T)=\varphi
\end{equation}
is called the fractional final value (or backward) problem (FFVP).

\subsection{History and motivation}
The abstract parabolic equations $u_t+Au=f$ was considered for thirty years with  a lot of papers,
readers can see the classical book by Cazenave and Haraux \cite{cazenave1998introduction} and references therein.
The FIVP  were also studied in a lot of papers.
Xing et al \cite{ChengLiYamamoto} discussed the existence, uniqueness, analyticity and the long-time asymptotic behavior of solutions of space-time
fractional reaction--diffusion equations in $\bR^n$
\[
   D_t^\alpha u +(- \Delta)^\beta u =p(x) u,
\]
subject to the initial condition $u(x, 0) =a(x)$.
Existence and uniqueness of the maximal solutions of some linear and nonlinear fractional problems were investigated in
\cite{Clement, Duan, Foias}. The blow--up and global solution of  time--fractional nonlinear
diffusion--reaction equations were studied recently with some kind of nonlinear sources  as
$f(t,u)=au+u^p$ (see Cao et al \cite{cao2019}),  $f(t,u)=|u|^p$
 (Zhang \cite{Zhang}),  Asogwa  et al. \cite{asogwa-foondun-mijena-nane-2018} studied  finite time blow up results for a variation  of equation \eqref{main}.
In many practical situation, the source is often assumed to satisfy
\[
   \dnorm{f(t,u)-f(t,v)}
\le
   K(t,M)\| u-v\|
   \ \ \ {\rm for \ } \| u\| ,\| v\| \le M,
\]
where $K:(0,T)\times (0,\infty)\to \mathbb{R}$. Generally, the source $f$ is assumed to be  locally Lipschitz with respect to the variable $u$, i.e., $\sup_{t\in (0,T)}|K(t,M)|<\infty$.
However, a singular source satisfying $\sup_{t\in (0,T)}|K(t,M)|=\infty$ is rarely studied.  Another kind of the source was studied in \cite{THNZ2018, TNHK2018}, but only for the backward problem. In the present paper, we consider the singular source with $K(t,M)=t^{-\nu} \kappa(M)$, $\nu>0$. The source is similar to  the one considered in a paper of Nagumo (see, e.g., \cite{Gorenflo}, Chap. 7). As known (see, e.g., \cite{Fabry}), the function $h:(0,\infty)\to (0,\infty)$ satisfies the Nagumo condition if $\int_0^\infty \frac{s}{h(s)}ds=\infty$. Hence,  the function $K$ is a Nagumo function and we call the source $f(t,u)$ is a Nagumo-type source.
The existence and uniqueness of maximal solution of the initial value  problem with respect to the singular source is still not studied.
{\it This is the first motivation of our paper.}

In the  papers mentioned above, the parameters $\alpha,\beta$ are assumed to be perfectly known.
But in the real word of applications, the fractional orders can only be approximated from the mathematical model
or statistical methods. In \cite{Aldoghaither-siam, cheng2009uniqueness}, the Caputo derivatives
can be identified approximately from observation data $u(x_0,t)$ with $t>0$, or $u(x,T)$ with $x\in\Omega\subset \mathbb{R}^n$.
Besides, Kateregga \cite{Kateregga2017}
used statistical methods as the quantiles, logarithmic moments method, maximum likelihood,
and the empirical characteristic function method
to identify the parameters of the L\'{e}vy process.
In these examples, the fractional orders are obtained only as approximate values.
Hence, a natural question is that whether the solutions of fractional equations is continuous
with respect to the perturbed orders.
The papers devoted to these questions are still rare.
We can list here some papers. Li and Yamamoto \cite{LiZhangJiaYamamoto2013}
investigated the solution $u_{\gamma, D}$ of the problem
\[
  D_t^\gamma u
 =
  \frac{\partial}{ \partial x} \left(D(x)\frac{\partial u}{\partial x} \right),
  (x, t) \in (0, 1) \times(0, T),
\]
subject to the Neumann condition $u_x(0, t) =u_x(1, t)=0$ and the initial condition $u(x, 0)=f(x)$. They proved
\[
   \| u_{\gamma_1, D_1} - u_{\gamma_2, D_2}\|_{L^2(0, T)}
\le
   C(|\gamma_1 -\gamma_2| + \|D_1 - D_2\|_{C[0,1]}).
\]
Trong et al \cite{trong2017potential} studied the continuity of solutions
of some linear fractional PDEs with perturbed orders.
In \cite{dt2019,tdv2019, vdt2019}, we investigated stability of solution
of some class of nonlinear space--fractional diffusion problems taking into account the disturbance of parameters.
In our knowledge, until now, we do not find another paper which considers the stability
of the nonlinear  FIVP with respect to the parameters $\alpha, \beta$.
{\it This is the second motivation of our paper.}

Besides the FIVP, we also consider the FFVP.
For the classical problem with $\alpha=1$, it is well-known that this problem is ill-posed in the sense of Hadamard.
The proposed methods to regularize these kind of problems are very abundant such as:
quasi--reversibility  \cite{JN2016}, quasi--boundary value \cite{DB2001, ttq2009},
Tikhonov \cite{ZM2010}, truncated \cite{NTT2010, trong2011regularization},
Landweber iteration and iterative Lavrentiev \cite{LX2012}, etc.

Recently, the  FFVP with $\alpha\in (0,1)$ was investigated.
Different from the case $\alpha=1$,  the linear FFVP is stable for $0<t<T$ and instable at $t=0$ (see \cite{wei2013, Tuan2017}).
Hence, only regularization of solution at $t=0$ is needed.
For the linear case, Ting Wei et al \cite{wei2014qr} used the Tikhonov method to regularizing the homogeneous problem.
Tuan et al \cite{Tuan2017} also used the Tikhonov method to regularize the nonhomogeneous
time--fractional problem
\begin{equation}
 \label{inhomo}
    D_t^\alpha u
   =
    L u + f(x, t), \  u(x,T)=u_T(x),
\end{equation}
where
$
   L u
  =
  \sum_{i=1}^d
   \frac{\partial}{\partial x_i}\left(A_{ij}\frac{\partial u}{\partial x_j} \right)
   +C(x) u
$.
Yang et al \cite{YangRenLi} also regularize the problem (\ref{inhomo}) by the quasi-reversibility method.
For the nonlinear case, the problem is completely different since the integral form of the problem is a nonlinear Fredholm Integral equation. Unlike the fractional linear case and the classical nonlinear final value heat equation,
the nonlinear FFVP is nonlocal and can not be transformed into a Volterra-type equation. Some pioneering results on existence and uniqueness solutions of some FFVP
were studied in \cite{THNZ2018, TNHK2018}.  The papers considered the operator $A$ having positive discrete eigenvalues and gives uniqueness and existence results of solution of FFVP for $T$ small.

From the  overview above, we discuss the motivation of the paper for the FFVP. The stability of
the solution of nonlinear FFVP with respect to unknown fractional orders $\alpha,\beta$  is still not investigated in the mentioned paper. Therefore, in the current paper, we study the stability of the nonlinear FFVP on the special space $C_{s,T}$ (defined in Section 2).

We note that the norm of the space cannot give any information of its functions at $t=0$.
Hence, we have to consider separately the case $t=0$. As mentioned, the problem is instability at $t=0$ and
we cannot find any paper that  dealt with the instability
and regularization for the nonlinear case with perturbed fractional orders has to be established. {\it This is the third motivation  of the paper.}

\subsection{Outline of the paper}

Summarizing  the discussion of the FIVP and the FFVP, in the present papers, we will

\begin{itemize}

\item[$\bullet$]
Investigate the existence and uniqueness of the maximal solution of the nonlinear FIVP with respect to a singular nonlinear source. To solve the problem, we have to establish an appropriate Gronwall-type inequality which also has a specific  merit  in investigating other fractional problems.

\item[$\bullet$]
Study the stability of the nonlinear FIVP with respect to  the perturbed orders.
For $\alpha \to 1^-$, we will prove that  the solution of the nonlinear FIVP tends to  that of classical
nonlinear initial value parabolic problem.

\item[$\bullet$]
Consider the existence and uniqueness solution of the FFVP for $t>0$.
In the case the problem has a unique solution, some stability results will be given.

\item[$\bullet$]
Analyze the ill-posedness of the FFVP with respect to the unknown fractional parameters $\alpha,\beta$ at $t=0$,
and establish a  method to regularize the solution of the FFVP.
\end{itemize}

The rest of the papers is divided into four sections.
The second section is devoted to some notations, definitions, and properties of the Mittag-Leffler function.
In the third section, we consider the existence, uniqueness, and stability of solutions of the nonlinear FIVP with respect to the perturbed orders.
In the fourth section, we investigate the existence, uniqueness, and stability of solution of the FFVP for $t>0$
and propose a method to regularize the FFVP at $t=0$.
In Sections 3-4 we only present the proofs of main theorems. Proofs of Lemmas will be given in the fifth section.

\section{Preliminary results}
To state precisely our problem, we will give some definitions. We denote the inner product in Hilbert space $H$ by $\langle. ,.\rangle$ and the associated norm by $\|.\|$.
For $\rho, \ct>0$, we define
\begin{equation*}
   \cc^\rho(\ct)
=
   \left\{\psi \in C\left((0, \ct] \right) :\ \int_0^\ct(\ct-\tau)^{\rho-1} | \psi(\tau) |^2 \ud \tau <\infty \right\}
.
\end{equation*}

Let us denote by $\{\es\}$ the spectral resolution of the identity associated to operator $A$.
We follow \cite[page 29]{yakubov} to define the power of the self-adjoint positive definite unbounded operator as
\[
  A^\beta u = \int_\theta^{+\infty} \lambda^\beta \ud \es u, \ \ \beta \in \mathbb{R},
\]
where $\theta$ is the lower bound of the spectrum of the operator $A$.

Generally, for a continuous function  $h: \mathbb{R} \to \mathbb{R}$, we denote the domain of $h(A)$ to be
\begin{equation}
\label{spectral-representation}
D(h(A))
:=
\left\{
      w\in H:
      \
     \int_\theta^{+\infty} |h(\lambda)|^2 \ud \dnorm{\es w}^2<+\infty
\right\}.
\end{equation}
If $w \in D(h(A))$, we define the linear operator
\[
   h(A) w =\int_\theta^{+\infty} h(\lambda) \ud \es w.
\]
In particularly, if $h(A)=A^s$ for some $s \ge 0$, we have the Hilbert space $D(A^s)$ with the norm
$
\dnorm{w}_s
=
\left( \int_\theta^{+\infty} \lambda^{2 s} \ud \dnorm{\es w}^2 \right)^{1/2}
$.
Let $0 \le s_* \le s^*$
and
$s_1, s_2  \in [s_*, s^*]$, $s_2\leq s_1$.
It is easy to see that
\[
D(A^{s_1})
\subset
D(A^{s_2})
\subset
D(A^0) = H
\, \,
\text{and}
\,\,
\dnorm{w}_{s_2}
\le
\theta^{s_2-s_1} \dnorm{w}_{s_1}
.
\]

For $\m, s>0$, we denote
 the closed ball of radius $\m>0$ centered at origin
in the Banach spaces $D(A^s)$
and
$C( [0,T] ; D(A^s ))$ by
\begin{eqnarray*}
B_s(\m)
&=&
\{
v\in D(A^s ): \|v\|_s\leq\m
\},
\nn
\\
B_{s,T}(\m)
&=&
\left\{
v\in C([0,T];D(A^s )) : | v |_{s,T} \leq \m
\right\},
\end{eqnarray*}
respectively, where $|v|_{s,T}=\sup_{0 \le t \le T} \| v(t)\|_s$.

For $\mathcal{T},\rho, s>0$, we denote
\[
C_{s,\rho}(\mathcal{T})
=
\{w\in C((0,\mathcal{T}], D(A^s)):\ \tnorm{w}_{s, \rho}<\infty\}
\]
where
$
   \tnorm{w}_{s, \rho} =\sup_{t \in (0, T]} t^\rho \dnorm{w(t)}_s
   .
$

Given the  notations defined above, we can state precisely the assumption for the singular source.
In fact, we consider the source function of the problem satisfying the following assumption.
\\

\noindent{\bf Assumption F1}.
{\it
Let $\beta>0$,  $s \in [0, \beta/2]$, $\nu \le \alpha/2$
and
$f: (0,T] \times D(A^s) \to L^2(0,T; H)$.
For every  $\m>0$, we
assume that
\begin{equation}
   L(s,\m):=   \sup_{0<t\leq T}\sup_{(w_1,w_2) \in \mathcal{D}_s (\m)}\frac{t^\nu \dnorm{f(t,w_1)-f(t,w_2)}}
 {\dnorm{ w_1-w_2 }_s}<\infty.
\label{local-lips}
\end{equation}
where
$\mathcal{D}_s(\m)
=
\{(w_1,w_2):\ w_1,w_2\in D(A^s), \| w_1\|_s, \| w_2\|_s\leq\m,  w_1\not= w_2\}$.
}

Put $\Omega=(0, 1)$, $H=L^2(\Omega)$,
we can directly check that some common sources of the following equations
satisfy  \textbf{Assumption F1} :
the Ginzburg Landau equation for $\nu=0, s=0$,
the Burger equation for $\nu=0, s=1$,
the Cahn--Hilliard and Kuramoto--Sivashinsky equations for $\nu=0, s=2$.

To investigate the stability of the solution of the  two problems (FIVP, FFVP),
we will restrict the value $(\alpha, \beta)$ in the bounded domain $\Delta$.
More precisely, for $0<\alpha_*<\alpha^*<2$, $\alpha^*<2\alpha_*$, and $0<\beta_*<\beta^*$,
the domain $\Delta$ is defined as
\begin{equation}
\label{delta}
   \Delta
   =
   \left\{
      (\alpha, \beta): \ \alpha_* \le \alpha \le \alpha^*,
      \beta_*\le \beta \le \beta^*
   \right\}
   .
\end{equation}

In this section,  we also introduce the Mittag--Leffler function and its properties
which play important roles in the proof of main results of current paper. We recall that the  Gamma and Beta functions  are
$$ \Gamma(z)=\int_0^\infty t^{z-1}e^{-t}dt,\ B(p,r)=\int_0^1t^{p-1}(1-t)^{r-1}dt\ \ \ \text{for}\ \text{Re}(z), p, r>0. $$
The Mittag--Leffler function with two parameters is defined as
   \begin{equation*}
     \fm_{p, r} (z)
  =
     \sum_{k=0}^{+\infty} \frac{z^k}{\Gamma(k p+r)}, \fm_{p} (z):=\fm_{p, 1} (z),
     \,\, z \in \mathbb{C} \ \ \ \text{for}\ p, r  >0.
   \end{equation*}




\begin{lemma}[see \cite{sakamoto2011fractional}]
\label{sakamoto}
   Letting $\lambda>0, p>0$ and $k\in \mathbb{N}$, we have
   \[
     \frac{\ud^k}{\ud t^k} \fm_{p}(-\lambda t^p)
  =
     - \lambda t^{p - k } \fm_{p, p-k+1}(-\lambda t^p), \, \, t\ge 0.
   \]
 \end{lemma}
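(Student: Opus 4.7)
The plan is to verify the identity by differentiating the defining power series of $\fm_p$ term by term. Starting from
\[
\fm_p(-\lambda t^p)=\sum_{n=0}^{\infty}\frac{(-\lambda)^n\,t^{np}}{\Gamma(np+1)},
\]
I would first note that this series has infinite radius of convergence in $t^p$; by standard estimates on $1/\Gamma(np+1)$, the same is true of every formal $t$-derivative series. Hence on any compact subinterval of $(0,\infty)$ all the termwise $k$-fold derivatives converge absolutely and uniformly, which legitimizes interchanging $\frac{\ud^k}{\ud t^k}$ with the infinite sum.

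Next, I would apply the elementary rule $\frac{\ud^k}{\ud t^k}t^a=\frac{\Gamma(a+1)}{\Gamma(a-k+1)}t^{a-k}$ with $a=np$, adopting the convention that $1/\Gamma$ vanishes at nonpositive integers (so the $n=0$ term drops for $k\ge 1$, consistently with $\frac{\ud^k}{\ud t^k}1=0$). After cancelling the factor $\Gamma(np+1)$ in each summand, this produces
\[
\frac{\ud^k}{\ud t^k}\fm_p(-\lambda t^p)=\sum_{n=1}^{\infty}\frac{(-\lambda)^n\,t^{np-k}}{\Gamma(np-k+1)}.
\]
The last step is a reindexing $m=n-1$, which factors out $-\lambda t^{p-k}$:
\[
\sum_{m=0}^{\infty}\frac{(-\lambda)^{m+1}\,t^{(m+1)p-k}}{\Gamma(mp+(p-k+1))}
=-\lambda\, t^{p-k}\sum_{m=0}^{\infty}\frac{(-\lambda t^p)^m}{\Gamma(mp+(p-k+1))}
=-\lambda\, t^{p-k}\fm_{p,p-k+1}(-\lambda t^p),
\]
which is precisely the claimed expression.

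The only delicate point is the bookkeeping when $np-k+1$ happens to be a nonpositive integer for small $n$ (so that $1/\Gamma(np-k+1)=0$): one must confirm that the termwise differentiation rule for $t^{np}$ and the reciprocal-Gamma convention in the series for $\fm_{p,p-k+1}$ vanish simultaneously. Since both sides are built from the same convention, the vanishings agree and no separate case analysis is needed. Apart from the uniform-convergence justification for termwise differentiation, the entire argument reduces to a routine index shift, so I do not anticipate any genuine obstacle.
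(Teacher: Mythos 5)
Your proof is correct. The paper does not prove this lemma at all — it simply cites Sakamoto--Yamamoto — and your termwise differentiation of the power series followed by the index shift $m=n-1$ is exactly the standard argument behind that reference, including the correct handling of the reciprocal-Gamma convention at nonpositive integers.
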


\begin{lemma}
\label{trong1}
Let  $0<p_*<p^*<2$ such that $p^*<2p_*$, and $r_*>0$.
Then for any $p, p_0 \in [p_*, p^*]$, and $r, r_0 \ge r_*$, and $\lambda \ge 0$,
we have

(a).
There exists a constant $C=C(p_*, p^*, r_*)>0$ such that
\[
\snorm{\fm_{p, r}(-\lambda)}
+
\snorm{\frac{\partial \fm_{p, r}}{\partial p}(-\lambda)}
+
\snorm{\frac{\partial \fm_{p, r}}{\partial r}(-\lambda)}
\le
C.
\]
   Moreover, we have
\begin{equation}
   0\leq E_\alpha(-z)\leq 1,\ 0\leq E_{\alpha,\alpha}(-z)\leq \frac{1}{\Gamma(\alpha)}\ \ \ \text{for}\ z\geq 0.
\label{Mittag-bound}
\end{equation}
(b).
Let $0<p_*<p^*<1$.
There exist two constants $C_1, C_2$ which  depend only  on $p_*, p^*$
such that
\[
\frac{1}{\Gamma(1-p)}
\frac{C_1}{1+\lambda}
\le
\fm_{p}(- \lambda)
\le
\frac{1}{\Gamma(1-p)}
\frac{C_2}{1+\lambda}
.
\]

(c).
There exists a constant $C=C(p_*, p^*)$ such that
\[
\left|
  \fm_{p}(-\lambda^r t^p)
  -
  \fm_{p_0}(-\lambda^{r_0} t^{p_0})
\right|
\le
C \lambda^{r^*} (1+\ln \lambda)
 \left(|p - p_0| +|r - r_0 | \right),
\, \,
 \forall \, \lambda \ge 1,
\]

(d).
We denote
\[
\E_{a, b}(\lambda, t, \tau)
=
(t-\tau)^{a-1} \fm_{a, a}(-\lambda^b (t-\tau)^a).
\]
Then, there exists a constant $C=C(p_*, p^*, r_*)$ such that
\begin{equation*}
\int_0^t
\left|
   \E_{p, r}(\lambda, t, \tau) - \E_{p_0, r_0}(\lambda, t, \tau)
\right|
\ud \tau
\le
C
\left((1+\lambda^r)|p- p_0| +|\lambda^r - \lambda^{r_0} | \right).
\end{equation*}


\end{lemma}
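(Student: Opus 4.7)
My plan is to handle the four parts in order, using a common toolkit: the series representation $E_{p,r}(z)=\sum_{k\ge 0}z^k/\Gamma(kp+r)$, the Hankel-contour integral representation, and the classical asymptotic expansion $E_{p,r}(-\lambda)=\lambda^{-1}/\Gamma(r-p)+O(\lambda^{-2})$ as $\lambda\to\infty$, valid for $p\in(0,2)$. Together these cover the entire range $\lambda\ge 0$ once one splits into small-$\lambda$ and large-$\lambda$ regimes.

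For part (a), the boundedness of $E_{p,r}(-\lambda)$ uniformly on $[p_*,p^*]\times[r_*,\infty)\times[0,\infty)$ follows by combining the series bound on $\lambda\le 1$ (dominated by $\sum_k 1/\Gamma(kp_*+r_*)<\infty$) with the asymptotic expansion on $\lambda>1$. For the partial derivatives in $p$ and $r$ I would differentiate the series term-by-term, producing series with digamma factors $\psi(kp+r)$; absolute convergence on $\lambda\le 1$ is immediate from the factorial growth of $\Gamma$, while on $\lambda>1$ I would instead differentiate the Hankel representation under the integral sign on a contour avoiding the negative real axis, using dominated convergence to justify the exchange. The inequalities in (\ref{Mittag-bound}) are classical consequences of complete monotonicity of $x\mapsto E_\alpha(-x)$ on $[0,\infty)$ for $\alpha\in(0,1]$ and the evaluation $E_{\alpha,\alpha}(0)=1/\Gamma(\alpha)$.

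For part (b), I would combine the asymptotic expansion (which gives $E_p(-\lambda)=\lambda^{-1}/\Gamma(1-p)+O(\lambda^{-2})$ on $\lambda\ge\lambda_0$ for some $\lambda_0$ depending only on $p_*,p^*$) with complete monotonicity and continuity on the compact set $[0,\lambda_0]\times[p_*,p^*]$, which makes $E_p(-\lambda)$ bounded above and below by positive constants. Matching both pieces to the profile $(1+\lambda)^{-1}/\Gamma(1-p)$ yields the two-sided inequality, where the reflection formula $\Gamma(p)\Gamma(1-p)=\pi/\sin(p\pi)$ controls $\Gamma(1-p)$ uniformly in $p\in[p_*,p^*]\subset(0,1)$.

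For parts (c) and (d), I would split
\[E_p(-\lambda^rt^p)-E_{p_0}(-\lambda^{r_0}t^{p_0})=\bigl[E_p-E_{p_0}\bigr](-\lambda^{r_0}t^{p_0})+\bigl[E_p(-\lambda^rt^p)-E_p(-\lambda^{r_0}t^{p_0})\bigr].\]
The first bracket is controlled via the $\partial_p E_p$ bound from (a). For the second, the mean value theorem in the argument yields a factor $|\lambda^rt^p-\lambda^{r_0}t^{p_0}|$, which after another mean value theorem in $(p,r)$ is bounded by $\lambda^{r^*}\ln\lambda\cdot|r-r_0|+Ct^{p^*}|\ln t|\cdot|p-p_0|$ for $\lambda\ge 1$, producing the required $\lambda^{r^*}(1+\ln\lambda)$ dependence. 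Part (d) follows by applying the same decomposition to $E_{p,p}$ in place of $E_p$ together with the singular factor $(t-\tau)^{p-1}$, then integrating in $\tau\in(0,t)$; the logarithm is absorbed into $(1+\lambda^r)$ via $\ln\lambda\le C\lambda^{r_*}$. The main obstacle I anticipate is the uniform-in-$\lambda$ control of $\partial_p E_{p,r}(-\lambda)$ and $\partial_r E_{p,r}(-\lambda)$ in part (a), since termwise differentiation of the series only delivers bounds locally uniform in $\lambda$; the cleanest route is through the Hankel integral representation, differentiated under the integral sign along a contour where the integrand decays exponentially. Once (a) is in hand the remaining parts reduce to mean value theorem estimates.
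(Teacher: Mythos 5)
Your handling of the inequalities in (\ref{Mittag-bound}) --- complete monotonicity of $E_\alpha(-z)$ on $[0,\infty)$, hence monotone decay from the values $E_\alpha(0)=1$ and $E_{\alpha,\alpha}(0)=1/\Gamma(\alpha)$ --- is exactly the paper's argument. For everything else the paper offers no proof: it explicitly proves only (\ref{Mittag-bound}) and delegates parts (a)--(d) to the reference \cite{trong2017potential}, so your sketch is a reconstruction of an omitted proof rather than an alternative to one given here. As a reconstruction it follows the standard route (and, in substance, the route of the cited reference): series bound for $\lambda\le 1$, Hankel-contour/asymptotic expansion $\fm_{p,r}(-\lambda)=\lambda^{-1}/\Gamma(r-p)+O(\lambda^{-2})$ for $\lambda$ large, the reflection formula to control $\Gamma(1-p)$ uniformly on $[p_*,p^*]\subset(0,1)$ in (b), and mean-value-theorem decompositions in the order parameters and in the argument for (c) and (d). Two places where the sketch is thinner than a complete proof requires. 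First, in (a) the parameter $r$ ranges over the unbounded set $[r_*,\infty)$, so the uniformity as $r\to\infty$ of the bounds on $\partial_p\fm_{p,r}(-\lambda)$ and $\partial_r\fm_{p,r}(-\lambda)$ obtained by differentiating the Hankel representation (where digamma factors $\psi(kp+r)$ appear and the contour estimates must be checked uniformly in $r$) needs an explicit argument; you correctly flag this as the main obstacle, but it is the crux of (a). Second, in (d) the integrand carries the singular factor $(t-\tau)^{p-1}$ with two different exponents $p$ and $p_0$, and obtaining the precise right-hand side $C\big((1+\lambda^r)|p-p_0|+|\lambda^r-\lambda^{r_0}|\big)$ is most naturally done through the exact antiderivative $\int_0^t\E_{p,r}(\lambda,t,\tau)\,\ud\tau=\lambda^{-r}\big(1-E_p(-\lambda^r t^p)\big)$ (the identity already used in Lemma \ref{Q-lemma}) rather than by integrating a pointwise bound; merely absorbing $\ln\lambda$ into $\lambda^{r_*}$ does not by itself yield the stated form, and the case $\lambda\in[0,1]$ must be treated separately since the logarithm trick fails there. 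Neither point is a wrong turn, but both must be filled in before the argument is complete.
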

\begin{proof}
 We only prove \eqref{Mittag-bound}. Readers can see the proof of other cases in \cite{trong2017potential}.
From the complete monotonicity of the Mittag-Leffler function $E_{\alpha}(-z)$ for $z\geq 0$ (see \cite{Gorenflo-Mittag}, Chap. 3) we have $(-1)^n\frac{d^n}{dz^n}E_\alpha(-z)\geq 0$ for $z\geq 0$.
Hence we have  $E_\alpha(-z), E_{\alpha,\alpha}(-z)$ is decreasing which give
 $0\leq E_{\alpha}(-z)\leq 1$, $0\leq E_{\alpha,\alpha}(-z)\leq \frac{1}{\Gamma(\alpha)}$ for $z\geq 0$.
\end{proof}
In this paper, we also need the useful inequality

\begin{lemma}
\label{gronwall}
~
Let $\alpha, q\in \mathbb{R}$, $0<\alpha\leq 1$, $q<\alpha$, and let $v, g\in C[0, T]$.
Then the equation
\[
  u(t)
=
   v(t) + g(t) \int_0^t (t - \tau )^{\alpha -1} \tau^{-q} u(\tau) \ud \tau
\]
has a unique solution $u\in C[0,T]$ which  satisfies
\begin{equation}
   |u(t)|
 \le
   \Gamma(1-q) \| v\|_{C[0,t]}
   E_{\alpha-q, 1-q} \left(\| g\|_{C[0,t]}\Gamma(\alpha) t^{\alpha-q}\right).
\label{u-bound}
\end{equation}
for  $t \in [0,T]$.
As a consequence, if $w\in C[0,T]$ satisfies
\[
   0
\le
   w(t)
\le
   v(t)+g(t)\int_0^t (t-\tau)^{\alpha-1}\tau^{-q}w(\tau)d\tau
   \ \ \ \ \text{for}\ t\in [0,T],
\]
and if $g(t)\geq 0$ for $t\in [0,T]$, then
\[
    w(t)
 \le
   \Gamma(1-q) \| v\|_{C[0,t]}
   E_{\alpha-q, 1-q} \left(\| g\|_{C[0,t]}\Gamma(\alpha) t^{\alpha-q}\right)
    \ \ \text{for}\ t\in [0,T].
\]
\end{lemma}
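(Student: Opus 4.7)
The plan is to treat the equation as a fixed-point problem for the linear Volterra operator
$$
(Ku)(t) := g(t)\int_0^t (t-\tau)^{\alpha-1}\tau^{-q} u(\tau)\,\ud \tau,
$$
and to deduce existence, uniqueness, and the Mittag--Leffler bound simultaneously from the Neumann series $u = \sum_{n=0}^\infty K^n v$. The hypotheses $0<\alpha\le 1$ and $q<\alpha$ give $q<1$, so $\tau^{-q}$ is locally integrable, and every iterated integrand has the form $(t-\tau)^{\alpha-1}\tau^c$ with $c>-1$; the Beta identity $\int_0^t (t-\tau)^{\alpha-1}\tau^c\,\ud\tau = B(\alpha,c+1)\,t^{\alpha+c}$ then applies at each step.

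With $V(t):=\|v\|_{C[0,t]}$ and $G(t):=\|g\|_{C[0,t]}$ (both nondecreasing in $t$), the inductive claim is
$$
|K^n v(t)| \le V(t)\,\Gamma(1-q)\,\frac{\bigl[G(t)\,\Gamma(\alpha)\,t^{\alpha-q}\bigr]^n}{\Gamma\bigl(n(\alpha-q)+1-q\bigr)}, \qquad n\ge 0.
$$
The base case is trivial. The inductive step applies the Beta identity above with $c=n(\alpha-q)-q$ and rearranges the cascade of ratios $\Gamma(\alpha)\Gamma(1+k(\alpha-q)-q)/\Gamma(\alpha+1+k(\alpha-q)-q)$ for $k=0,\dots,n-1$ so that it collapses to $[\Gamma(\alpha)]^n\,\Gamma(1-q)/\Gamma(n(\alpha-q)+1-q)$. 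This bookkeeping---making the product of Beta factors match the Mittag--Leffler coefficient---is the delicate algebraic step.

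Summation over $n$ identifies the Neumann-series majorant with $\Gamma(1-q)\,V(t)\,E_{\alpha-q,1-q}(G(t)\Gamma(\alpha) t^{\alpha-q})$ as in \eqref{u-bound}. Because $E_{\alpha-q,1-q}$ is entire, this majorant is finite, which forces uniform absolute convergence of the Neumann series on $[0,T]$; its sum is a continuous function solving $u=v+Ku$, and the pointwise bound is read off. Uniqueness follows because the difference $w$ of any two solutions satisfies $w=K^n w$ for every $n$, and the operator norm of $K^n$ on $C[0,T]$ tends to zero by the same iterated bound.

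For the inequality consequence, the assumption $g\ge 0$ makes $K$ order-preserving, so iterating $w\le v+Kw$ yields $w(t)\le \sum_{k=0}^{N-1} K^k v(t)+K^N w(t)$ for each $N$. Boundedness of $w\in C[0,T]$ together with the iterated estimate forces $K^N w\to 0$ uniformly, and letting $N\to\infty$ (after replacing $v$ by its nondecreasing envelope $V$ where needed) reproduces the same Mittag--Leffler bound. The main obstacle I anticipate is the telescoping step: the shift $x\mapsto x+\alpha$ appearing in the Gamma denominators does not exactly match the shift $x\mapsto x+(\alpha-q)$ that would be wanted for a naive cancellation, so the extra $q$ per iteration must be absorbed via a careful rewriting (or by appealing to Gamma-function monotonicity to dominate the iterated Beta product by the Mittag--Leffler coefficient).
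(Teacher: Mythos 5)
Your route --- Neumann/Picard iteration for the Volterra operator $K$, a term-by-term Gamma-function estimate summed into a Mittag--Leffler majorant, and order-preservation of $K$ for the comparison statement --- is exactly the paper's route: the paper iterates $u_{n+1}=Su_n$ from $u_0=0$, so its telescoping series $\sum_n(u_{n+1}-u_n)$ is precisely your Neumann series $\sum_n K^nv$, and its monotone-iteration argument for the inequality is your last paragraph. The existence, uniqueness and comparison mechanisms are all sound.

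The problem is the step you yourself flag as ``delicate bookkeeping'': it is not a bookkeeping issue but a genuine gap, and it cannot be closed in the form you (and the paper) assert. The Beta cascade gives the iterated coefficients
$a_n=\Gamma(\alpha)^n\prod_{k=0}^{n-1}\Gamma(k(\alpha-q)+1-q)/\Gamma((k+1)(\alpha-q)+1)$,
and matching these against the Mittag--Leffler coefficients $b_n=\Gamma(1-q)\Gamma(\alpha)^n/\Gamma(n(\alpha-q)+1-q)$ requires $\Gamma(k(\alpha-q)+1-q)\le\Gamma(k(\alpha-q)+1)$ for every $k\ge1$. Since $\Gamma$ is not monotone on $(0,\infty)$ (its minimum is near $1.4616$), this fails: already for $n=1$ with $\alpha=1$, $q=1/2$, $v\equiv g\equiv1$ one computes $Kv(t)=B(1,\tfrac12)\,t^{1/2}=2\sqrt{t}$, whereas your inductive claim gives $\Gamma(\tfrac12)t^{1/2}/\Gamma(1)=\sqrt{\pi}\,t^{1/2}<2\sqrt{t}$. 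Worse, in that example the exact solution is $u(t)=e^{2\sqrt{t}}=1+2\sqrt{t}+\cdots$, while the right-hand side of \eqref{u-bound} is $\sqrt{\pi}\,E_{1/2,1/2}(\sqrt{t})=1+\sqrt{\pi}\,\sqrt{t}+2t+\cdots$, which is strictly smaller for small $t>0$; so neither ``careful rewriting'' nor ``Gamma monotonicity'' can rescue the stated inequality --- it is false as written, and the paper's own proof asserts the identical term-by-term bound without justification and has the same defect. What is true for $0\le q<\alpha$ (the only range the paper uses, $q=2\nu\ge0$), and suffices for all subsequent applications, is the bound with an extra constant: setting $C_q=\sup_{k\ge1}\Gamma(k(\alpha-q)+1-q)/\Gamma(k(\alpha-q)+1)<\infty$ one gets $a_n\le C_q^{\,n}\,b_n$, hence
$|u(t)|\le\Gamma(1-q)\,\|v\|_{C[0,t]}\,E_{\alpha-q,1-q}\bigl(C_q\,\|g\|_{C[0,t]}\Gamma(\alpha)\,t^{\alpha-q}\bigr)$;
your convergence, uniqueness and comparison arguments then go through verbatim with this corrected majorant (for $q=0$ the product telescopes exactly and $C_0=1$, recovering the classical fractional Gronwall inequality).
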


\begin{proof}
See Appendix \ref{gronwall-app}.
\end{proof}

We also need the  following results

\begin{lemma}
\label{Q-lemma}
Let $T, \theta>0, \alpha \in (0, 1], \beta>0$, $s \in [0, \beta/2]$, $r \ge 0$,  $t_1,t_2\in (0,T]$, $t_1<t_2$,
 $w\in C([0,T]; D(A^r))$.

(i) For $\zeta\in D(A^{s+r})$, we have
$ E_\alpha(-t^\alpha A^\beta)\zeta\in  D(A^{s+r})$ and
$\| E_\alpha(-t^\alpha A^\beta)\zeta\|_{s+r}\leq \|\zeta\|_{s+r} $.

(ii)
Put
\begin{eqnarray}
\label{Q-define}
Q_{ \alpha, \beta, A}(w)(t_1,t_2)
 &=&
\int_{t_1}^{t_2}
   \E_{\alpha, \beta}(A, t_2, \tau) w(\tau)
\ud \tau,
\end{eqnarray}
where
\[
   \E_{\alpha, \beta}(\lambda, t, \tau)
=
   (t-\tau)^{\alpha-1} \fm_{\alpha, \alpha}(-\lambda^\beta (t-\tau)^\alpha)
=
   \frac{1}{\lambda^\beta}\frac{d}{d\tau}E_\alpha(-\lambda^\beta(t-\tau)^\alpha).
\]
If $w\not\equiv 0$ on $[t_1,t_2]$ then
\begin{equation}
\label{Q-es}
   \dnorm{Q_{ \alpha, \beta, A}(w)( t_1,t_2 )}_{s+r}^2
   <
   \frac{1}{\Gamma(\alpha)}\
   \sup_{\lambda\geq\theta} \lambda^{2s-\beta}H_0(\lambda,t_1,t_2)
   \int_{t_1}^{t_2} (t - \tau )^{\alpha - 1} \dnorm{ w(\tau) }_r^2  \ud \tau.
\end{equation}
where
\[
   H_0(\lambda,t_1,t_2)
 :=
   \left(1- E_{\alpha}\big(- \lambda^\beta (t_2-t_1)^\alpha \big)\right).
\]

(iii) Put
\begin{eqnarray}
  R_{ \alpha, \beta, A}(w)( t_1, t_2 )
&=&
  \int_0^{t_1}
     |\E_{\alpha, \beta}(A, t_1, \tau)-\E_{\alpha, \beta}(A, t_2, \tau)| w(\tau)
  \ud \tau
 .
 \nn
\end{eqnarray}
Then
\begin{eqnarray}
   \dnorm{R_{ \alpha, \beta, A}(w)( t_1, t_2 )}_{s+r}^2
&\le &
   \frac{1}{\Gamma(\alpha)}\
   \sup_{\lambda\geq\theta}\lambda^{2s-\beta}H(\lambda,t_1,t_2)
   \int_0^{t_1}
   (t_1 -\tau)^{\alpha-1}  \dnorm{w(\tau)}_r^2
   \ud \tau.
\label{equicontinuous-estimate-1}
\end{eqnarray}
where
\[
   H(\lambda,t_1,t_2)
=
   1-E_\alpha(-\lambda^\beta (t_2-t_1)^\alpha)
+
   E_\alpha(-\lambda^\beta t_2^\alpha)-E_\alpha(-\lambda^\beta t_1^\alpha).
\]

\end{lemma}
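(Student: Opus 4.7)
The plan is to leverage the spectral theorem for $A$: passing to the spectral ($L^2$) representation in which $A$ becomes multiplication by $\lambda$, each claim reduces to a pair of scalar integrals in $\lambda$ and $\tau$. The engine of all three parts is the identity
\[
\E_{\alpha,\beta}(\lambda, t, \tau) = \frac{1}{\lambda^\beta}\frac{\partial}{\partial \tau}\fm_\alpha(-\lambda^\beta(t-\tau)^\alpha),
\]
which is Lemma \ref{sakamoto} with $k=1$; integrating it in $\tau$ telescopes to produce exactly the factors $H_0$ and $H$ on the right-hand sides. Part (i) I would dispatch in one line: the spectral theorem gives $\dnorm{\fm_\alpha(-t^\alpha A^\beta)\zeta}_{s+r}^2 = \int_\theta^{+\infty}\lambda^{2(s+r)}|\fm_\alpha(-t^\alpha\lambda^\beta)|^2\ud\dnorm{\es\zeta}^2$, and the bound $0\leq\fm_\alpha(-z)\leq 1$ from Lemma \ref{trong1}(a) immediately yields $\leq\dnorm{\zeta}_{s+r}^2$.

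For part (ii), I would first record $\int_{t_1}^{t_2}\lambda^\beta\E_{\alpha,\beta}(\lambda, t_2, \tau)\ud\tau = 1 - \fm_\alpha(-\lambda^\beta(t_2-t_1)^\alpha) = H_0(\lambda, t_1, t_2)$ from the identity above. In the spectral representation, $w(\tau)$ and $Q_{\alpha,\beta,A}(w)(t_1,t_2)$ become scalar-valued functions of $\lambda$, say $w(\tau,\lambda)$ and $Q(\lambda)$, with $Q(\lambda) = \int_{t_1}^{t_2}\E_{\alpha,\beta}(\lambda, t_2, \tau) w(\tau,\lambda)\ud\tau$. Cauchy--Schwarz on this $\tau$-integral with the non-negative weight $\lambda^\beta\E_{\alpha,\beta}(\lambda, t_2, \tau)$ (of total mass $H_0$) produces
\[
|Q(\lambda)|^2 \leq H_0(\lambda, t_1, t_2)\cdot\frac{1}{\lambda^\beta}\int_{t_1}^{t_2}\E_{\alpha,\beta}(\lambda, t_2, \tau)|w(\tau, \lambda)|^2\ud\tau.
\]
Bounding $\E_{\alpha,\beta}(\lambda, t_2, \tau) \leq (t_2-\tau)^{\alpha-1}/\Gamma(\alpha)$ via $\fm_{\alpha,\alpha}(-z)\leq 1/\Gamma(\alpha)$ (Lemma \ref{trong1}(a)), multiplying by $\lambda^{2(s+r)}$, integrating against the spectral measure, and swapping orders by Fubini, I can then pull out $\sup_{\lambda\geq\theta}\lambda^{2s-\beta}H_0(\lambda, t_1, t_2)$ and recognise the remaining $\lambda$-integral as $\dnorm{w(\tau)}_r^2$. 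Strictness of the inequality comes from $\fm_{\alpha,\alpha}(-z) < 1/\Gamma(\alpha)$ for $z > 0$ combined with $w \not\equiv 0$, since the strict bound is active on a positive-measure set.

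Part (iii) will follow the same template once the absolute value is stripped off, and this is the main obstacle. The decisive step is the monotonicity $\partial_t \E_{\alpha,\beta}(\lambda, t, \tau) \leq 0$ for each fixed $\tau$ and $\lambda > 0$: writing $\E_{\alpha,\beta}(\lambda, t, \tau) = -\lambda^{-\beta}\partial_u\fm_\alpha(-\lambda^\beta u^\alpha)|_{u=t-\tau}$, it suffices to show that $u\mapsto\fm_\alpha(-\lambda^\beta u^\alpha)$ is completely monotone, which follows from complete monotonicity of $\fm_\alpha(-\cdot)$ (see the proof of Lemma \ref{trong1}(a)) and the fact that $u\mapsto\lambda^\beta u^\alpha$ is a Bernstein function for $\alpha\in(0,1]$, since the composition of a completely monotone function with a Bernstein function is completely monotone. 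Hence $|\E_{\alpha,\beta}(\lambda, t_1,\tau) - \E_{\alpha,\beta}(\lambda, t_2,\tau)| = \E_{\alpha,\beta}(\lambda, t_1,\tau) - \E_{\alpha,\beta}(\lambda, t_2,\tau)$, and the telescoping identity gives
\[
\int_0^{t_1}\lambda^\beta[\E_{\alpha,\beta}(\lambda, t_1,\tau) - \E_{\alpha,\beta}(\lambda, t_2,\tau)]\ud\tau = H(\lambda, t_1, t_2).
\]
Running the same weighted Cauchy--Schwarz as in (ii) with this difference as weight, and bounding it pointwise by $\E_{\alpha,\beta}(\lambda, t_1,\tau) \leq (t_1-\tau)^{\alpha-1}/\Gamma(\alpha)$, will then close the estimate.
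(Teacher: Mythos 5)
Your proposal is correct and follows essentially the same route as the paper's proof: spectral representation, the telescoping identity $\E_{\alpha,\beta}=\lambda^{-\beta}\partial_\tau \fm_\alpha(-\lambda^\beta(t-\tau)^\alpha)$ to produce $H_0$ and $H$ as the total mass of the kernel, weighted Cauchy--Schwarz against that kernel, and the pointwise bound $\fm_{\alpha,\alpha}(-z)\le 1/\Gamma(\alpha)$. The only cosmetic difference is in (iii), where you justify the monotonicity of $u\mapsto u^{\alpha-1}\fm_{\alpha,\alpha}(-\lambda^\beta u^\alpha)$ via complete monotonicity of the Bernstein composition $u\mapsto \fm_\alpha(-\lambda^\beta u^\alpha)$, whereas the paper asserts it more tersely from $\fm_{\alpha,\alpha}(-z)\ge 0$ being decreasing (i.e., a product of nonnegative decreasing factors); both establish the same sign fact needed to remove the absolute value.
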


\begin{proof}
See appendix \ref{Q-lemma-app}.
\end{proof}

\section{The FIVP}

The continuity of solution of the homogeneous FIVP studied in \cite{trong2017potential},
but  not studied for the nonlinear case.
Hence in this section, we study the well-posedness of solution of the FIVP with nonlinear source.
In fact, we will prove  existence results for  the mild solutions of the problem.
In the case the  problem has a unique mild solution,
we prove that this solution  depends continuously on the fractional orders $\alpha, \beta$ and
the initial data $\zeta$.

By the definition of the spectral resolution of the operator $A$ and the Laplace transform, we can rewrite the FIVP as
the following integral equation

\begin{equation} \label{mild-solution}
u(t)
 =
     \fm_\alpha \big(- t^\alpha A^\beta \big) \zeta
    +
    \int_0^t
       \E_{\alpha, \beta}(A, t, \tau)
       f(\tau, u(\tau))
    \ud \tau,
\end{equation}
where
$
\E_{\alpha, \beta}(z, t, \tau)
=
(t-\tau)^{\alpha-1} \fm_{\alpha, \alpha}(-z^\beta (t-\tau)^\alpha)
$.
A function $u$ that satisfies  Eq. (\ref{mild-solution})
is called a mild solution of the FIVP.
We prove that the FIVP has a unique mild solution that
depends continuously on the input data: fractional order and the initial value data.

Firstly, we have the following existence result.

\begin{theorem}
\label{existence-thm}
Let $0<\alpha\leq 1$, $\beta>0$,  $\nu \le \alpha/2$, $0 \le s \le \beta/2$
and
$\zeta \in D(A^s)$.
Let $f \in C\left((0,T)\times D(A^s); H\right)$ and let $h: [0,T]\to\mathbb{R}$ be Lebesgue measurable.
Suppose that there exists a increasing function $\psi: [0, +\infty) \to [0, +\infty)$ such that
\begin{equation}
    \dnorm{f(t,w)}
\le
    \kappa t^{-\nu} \psi(\dnorm{w}_s) +h(t).
\label{F2}
\end{equation}
If
\[
m_T
:=
\sup_{0 \le t \le T} \int_0^t (t-\tau)^{\alpha-1} |h(\tau)|^2 \ud \tau
<\infty
\]
and there exists $m>0$ such that
\begin{equation}
\label{m-inq}
m
>
\|\zeta\|_s
+
\left(\frac{2}{\Gamma(\alpha)} \right)^{1/2} \theta^{s-\beta/2}
\left(
   m_T
  +
   B(\alpha, 1-2\nu) \kappa^2 T^{\alpha-2\nu}
   (\psi(m))^2
\right)^{1/2}
,
\end{equation}
then the problem (\ref{mild-solution}) has at least one solution $u\in C([0,T];D(A^s))$.
\end{theorem}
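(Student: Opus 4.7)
I would prove existence by applying Schauder's fixed point theorem to the integral operator
\[
\F(u)(t)
=
\fm_\alpha(-t^\alpha A^\beta)\zeta
+
\int_0^t \E_{\alpha,\beta}(A,t,\tau)\, f(\tau,u(\tau))\,\ud\tau
\]
on the closed convex bounded set $B_{s,T}(m)\subset C([0,T];D(A^s))$. A fixed point of $\F$ is, by construction, a mild solution of the FIVP in the sense of (\ref{mild-solution}). The essential spectral input is that, under $s\le \beta/2$ and $\lambda\ge\theta$, one has $\sup_{\lambda\ge\theta}\lambda^{2s-\beta}=\theta^{2s-\beta}$, which, combined with $H_0\le 1$, turns Lemma~\ref{Q-lemma}(ii) into the clean $H$-to-$D(A^s)$ estimate used throughout.

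For the self-map property, fix $u\in B_{s,T}(m)$. Lemma~\ref{Q-lemma}(i) bounds the homogeneous part by $\|\zeta\|_s$, while Lemma~\ref{Q-lemma}(ii) with $r=0$, $t_1=0$, $t_2=t$ gives
\[
\dnorm{Q_{\alpha,\beta,A}(f(\cdot,u(\cdot)))(0,t)}_s^2
\le
\frac{\theta^{2s-\beta}}{\Gamma(\alpha)}\int_0^t(t-\tau)^{\alpha-1}\|f(\tau,u(\tau))\|^2\,\ud\tau.
\]
Feeding in the growth bound (\ref{F2}), the inequality $(a+b)^2\le 2a^2+2b^2$, monotonicity of $\psi$ (so that $\psi(\|u(\tau)\|_s)\le\psi(m)$), and the beta integral $\int_0^t(t-\tau)^{\alpha-1}\tau^{-2\nu}\,\ud\tau=t^{\alpha-2\nu}B(\alpha,1-2\nu)$ (finite since $2\nu\le\alpha\le 1$) reproduces precisely the right-hand side of (\ref{m-inq}), which by hypothesis is strictly less than $m$. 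Hence $\F(B_{s,T}(m))\subset B_{s,T}(m)$.

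To obtain equicontinuity and continuity, for $t_1<t_2$ I would split $\F(u)(t_2)-\F(u)(t_1)$ into the semigroup-difference term $[\fm_\alpha(-t_2^\alpha A^\beta)-\fm_\alpha(-t_1^\alpha A^\beta)]\zeta$, the near-endpoint Volterra piece $Q_{\alpha,\beta,A}(f(\cdot,u))(t_1,t_2)$, and the remainder $R_{\alpha,\beta,A}(f(\cdot,u))(t_1,t_2)$. Lemma~\ref{Q-lemma}(ii)--(iii) bounds the last two in terms of $\sup_\lambda\lambda^{2s-\beta}H_0$ and $\sup_\lambda\lambda^{2s-\beta}H$, both of which vanish with $|t_2-t_1|$ by dominated convergence in $\lambda$, while Lemma~\ref{sakamoto} controls the first piece. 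These bounds are uniform in $u\in B_{s,T}(m)$ thanks to (\ref{F2}), yielding equicontinuity; continuity of $\F$ itself then follows from continuity of $f$ in its second argument, the uniform domination (\ref{F2}), and dominated convergence applied to $\int_0^t(t-\tau)^{\alpha-1}\|f(\tau,u_n(\tau))-f(\tau,u(\tau))\|^2\,\ud\tau$.

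The main obstacle I expect is pointwise relative compactness of $\{\F(u)(t):u\in B_{s,T}(m)\}$ in $D(A^s)$ for each fixed $t$. I would attack it by a spectral truncation at $\lambda=N$: the high-frequency tail is uniformly small because $\sup_{\lambda\ge N}\lambda^{2s-\beta}H_0(\lambda,0,t)\to 0$ as $N\to\infty$ (immediate when $s<\beta/2$; the borderline $s=\beta/2$ uses pointwise decay of $1-\fm_\alpha(-\lambda^\beta t^\alpha)$), while the low-frequency part lies in a bounded subset of a subspace compactly embedded in $D(A^s)$. Combining this with the equicontinuity above and invoking an Arzel\`a--Ascoli argument gives relative compactness of $\F(B_{s,T}(m))$ in $C([0,T];D(A^s))$, and Schauder's fixed point theorem then produces the required mild solution. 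The tail/low-frequency decomposition is the delicate step, since the statement does not explicitly place a compact-resolvent assumption on $A$.
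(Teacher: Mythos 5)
Your proposal follows essentially the same route as the paper: the same operator $\F$, the same ball of radius $m$, the same use of Lemma \ref{Q-lemma} with $r=0$, $t_1=0$, $t_2=t$ for the a priori bound (\ref{bound-es}), and the same decomposition into the semigroup difference, $R_{\alpha,\beta,A}$ and $Q_{\alpha,\beta,A}$ for equicontinuity. The only structural difference is the fixed point theorem: the paper invokes the Leray--Schauder nonlinear alternative on the open ball $\Omega=\{w:\ |w|_{s,T}<m\}$, using (\ref{m-inq}) to exclude solutions of $u=\mu\F u$ with $u\in\partial\Omega$, $\mu\in(0,1)$, whereas you verify the self-map property $\F(B_{s,T}(m))\subset B_{s,T}(m)$ directly and apply Schauder. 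These are interchangeable here, since (\ref{m-inq}) yields both conclusions by the identical computation.

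The point where you genuinely depart from the paper is the compactness step, and your instinct is correct: boundedness and equicontinuity of $\F(\Omega)$ do not by themselves give relative compactness in $C([0,T];D(A^s))$ when $D(A^s)$ is infinite dimensional; the vector-valued Arzel\`a--Ascoli theorem additionally requires $\{\F(u)(t):u\in\Omega\}$ to be relatively compact in $D(A^s)$ for each fixed $t$. The paper does not address this: it declares ``$\F$ is completely continuous'' immediately after establishing boundedness and equicontinuity, so the gap you are trying to close is present in the paper's own argument. Your proposed repair is, however, also incomplete, for the reason you yourself flag. The spectral tail estimate does control the high frequencies uniformly over the ball when $s<\beta/2$ (it fails at the borderline $s=\beta/2$, where $\lambda^{2s-\beta}H_0(\lambda,0,t)\to 1$ as $\lambda\to\infty$ for fixed $t>0$, not $0$), but the low-frequency part $\int_\theta^N \ud\es$ projects onto a subspace that is finite dimensional, or compactly embedded in $D(A^s)$, only when $A$ has discrete spectrum with finite multiplicities, i.e.\ compact resolvent --- an assumption absent from the theorem. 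So either such a hypothesis must be added (it holds in the concrete examples the paper lists, where $A$ is an elliptic operator on a bounded domain), or the compactness route must be abandoned in favour of a contraction or approximation argument in the spirit of Theorem \ref{global-existence}. As written, neither your proof nor the paper's is complete for a general self-adjoint positive definite $A$.
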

\begin{remark}
Let $n \in \mathbb{N}$, $a_k \geq 0$ and $p_k \in [0, 1)$ for $k=1, 2,.., n$. The condition \eqref{m-inq} holds if
$
   \psi(z)=\sum_{k=1}^n a_k z^{p_k}
$
for  $z\geq 0$.

\end{remark}
\begin{proof}
Let us  denote
\[
   \F(w)(t)
 =
     \fm_\alpha \big(- t^\alpha A^\beta \big) \zeta
    +
    \int_0^t
       \E_{\alpha, \beta}(A, t, \tau)
       f(\tau, w(\tau))
    \ud \tau.
\]
for $w\in C([0,T],D(A^s))$,
and put
\[
\Omega=\left\{ w \in C([0,T],D(A^s)): | w |_{s,T}<m \right\}.
\]

We show that $\F$ is completely continuous.
Indeed, since $f$ is a continuous function, we can easily  show that $\F$ is continuous.
Using Lemma \ref{Q-lemma} with $r=0, t_1=0, t_2=t$, we also have
\begin{eqnarray}
\label{bound-es}
   \dnorm{\F (w)(t)}_s
&\le &
   \dnorm{\zeta}_s
   +
    \left(\frac{1}{\Gamma(\alpha)}\right)^{1/2} \theta^{s-\beta/2}
    \left(
    \int_0^t
       (t-\tau)^{\alpha-1} \dnorm{ f(\tau, w(\tau))}^2
    \ud \tau.
    \right)^{1/2}
\nn
\\
&\le &
   \dnorm{\zeta}_s
   +
    \left(\frac{2}{\Gamma(\alpha)}\right)^{1/2} \theta^{s-\beta/2}
    \left(
    \int_0^t
       (t-\tau)^{\alpha-1}
       \left(
          \kappa^2 \tau^{-2\nu} \left(\psi(\dnorm{w(\tau)}_s)\right)^2 +\snorm{ h(\tau)}^2
       \right)
    \ud \tau
    \right)^{1/2}
\nn
\\
&\le &
\|\zeta\|_s
+
\left(\frac{2}{\Gamma(\alpha)}\right)^{1/2} \theta^{s-\beta/2}
\left(
   m_T
  +
   B(\alpha, 1-2\nu) \kappa^2 T^{\alpha-2\nu} (\psi(m))^2
\right)^{1/2}
\end{eqnarray}
for any $u \in \Omega$.
Hence $\F(\Omega)$ is bounded.
Moreover, for $t_1, t_2 \in [0, T], \ t_1<t_2$ and $w \in \Omega$, we have
\begin{eqnarray*}
   \dnorm{\mathcal{F}(w)(t_1)-\mathcal{F}(w)(t_2)}_s
& \le &
   \dnorm{
      \left(\fm_\alpha \big(- t_1^\alpha A^\beta \big)-\fm_\alpha \big(- t_2^\alpha A^\beta \big)\right)
      \zeta
   }_s
\\
&+&
   \dnorm{
   R_{\alpha,\beta,A}(f(.,w))(t_1,t_2)}_s
+
   \dnorm{ Q_{\alpha,\beta,A}(f(.,w))(t_1,t_2)}_s
\end{eqnarray*}
where $ Q_{\alpha,\beta,A}(f(.,w))(t_1,t_2), R_{\alpha,\beta,A}(f(.,w))(t_1,t_2)$ are defined in Lemma \ref{Q-lemma}.
For $2s \le \beta$, $r=0$, the quantities
$ \sup_{\lambda\geq \theta}H(\lambda,t_1,t_2), \sup_{\lambda\geq \theta}H_0(\lambda,t_1,t_2)$
 in Lemma \ref{Q-lemma} satisfy
\[
 \lim_{\delta\to 0}\sup_{|t_1-t_2| \le \delta}
 \sup_{\lambda\ge \theta} H_0(\lambda,t_1,t_2)
 =
\lim_{\delta\to 0}\sup_{|t_1-t_2|\le\delta}
\sup_{\lambda\ge \theta}H(\lambda,t_1,t_2)
=
0.
\]
Hence, we can use Lemma \ref{Q-lemma} to verify directly the set $\F(\Omega)$ is equicontinuous.

This shows that $\F: \overline{\Omega} \to C([0,T],D(A^s))$  is completely continuous.
We suppose that there exists $u \in \partial \Omega$ and $\mu \in (0, 1)$ such that
$
   u= \mu \F u.
$
We can use (\ref{bound-es}) to get the estimate
\begin{eqnarray*}
   \dnorm{u(t)}_s
&=&
   \mu\dnorm{\F(u)(t)}_s
\\
&\le &
\|\zeta\|_s
+
\left(\frac{2}{\Gamma(\alpha)}\right)^{1/2} \theta^{s-\beta/2}
\left(
   m_T
  +
   B(\alpha, 1-2\nu) \kappa^2 T^{\alpha-2\nu}
   (\psi(m))^2
\right)^{1/2}
\end{eqnarray*}
or
\[
   m
\le
   \|\zeta\|_s
+
   \left(\frac{2}{\Gamma(\alpha)}\right)^{1/2} \theta^{s-\beta/2}
   \left(
      m_T
    +
      B(\alpha, 1-2\nu) \kappa^2 T^{\alpha-2\nu}
      (\psi(m))^2
   \right)^{1/2}
.
\]
This  contradicts with (\ref{m-inq}).
Hence, the nonlinear Leray-Schauder alternatives fixed point theorem (see \cite[p.4]{Granas})
implies that
$\F$ has a fixed point $u\in \overline{\Omega}$
or the problem (\ref{mild-solution}) has a solution in $\overline{\Omega}$.
This completes the proof of the Theorem.
\end{proof}

\begin{theorem}
\label{global-existence}
Let $\alpha\in (0,1),\beta>0$,  $s \in [0, \beta/2]$,
and let
$f: (0,T)\times D(A^s)\to L^2(0,T; H)$.
Assume that the condition \eqref{local-lips} holds and
\[
    \kappa
 =
    \sup_{\m>0}L(s,\m)<\infty.
\]

For $\nu<\alpha/2$, the equation \eqref{mild-solution} has a unique solution $u\in C([0,T];D(A^s))$.
Moreover, if $g \in C\left([0, T], D(A^s)\right)$ with
\begin{equation}
\label{g-condition}
    g(t)
 =
     \fm_\alpha \big(- t^\alpha A^\beta \big) \zeta
    +
    \int_0^t
       \E_{\alpha, \beta}(A, t, \tau)
       f(\tau, 0)
    \ud \tau
,
\end{equation}
then
\begin{equation}
\label{global-bound}
   \| u(t) \|_s^2
\le
   2 \Gamma(1-2\nu) \| g\|_{s, t}^2
   E_{\alpha-2\nu,1-2\nu} \left(2\theta^{2s-\beta}\kappa^2 t^{\alpha-2\nu}\right)
\end{equation}
for any $t\in [0,T]$.

For
$\nu=
\alpha/2$,
$\kappa < \theta^{\beta/2-s} \left( \Gamma(1-\alpha)\right)^{-1/2}$,
the equation \eqref{mild-solution} has a unique solution $u\in C([0,T];D(A^s))$.

 For
 $\nu = \alpha/2$,
 $\kappa = \theta^{\beta/2-s} \left( \Gamma(1-\alpha)\right)^{-1/2}$, $0\leq s<\beta/2
 $,
 if we have in addition that the assumptions
\eqref{F2} and \eqref{m-inq} hold
then the equation \eqref{mild-solution} has a unique solution $u\in C([0,T];D(A^s))$.
\end{theorem}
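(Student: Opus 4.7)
The plan is to recast (\ref{mild-solution}) as a fixed-point problem $u=\F(u)$ for the operator
\begin{equation*}
\F(w)(t)=\fm_\alpha(-t^\alpha A^\beta)\zeta+\int_0^t\E_{\alpha,\beta}(A,t,\tau)f(\tau,w(\tau))\,d\tau
\end{equation*}
on $C([0,T];D(A^s))$, and to control $\F$ by Lemma \ref{Q-lemma}(ii) together with the global Lipschitz bound $\|f(\tau,w_1)-f(\tau,w_2)\|\le\kappa\tau^{-\nu}\|w_1(\tau)-w_2(\tau)\|_s$ that follows from $\kappa=\sup_{\m}L(s,\m)<\infty$. Applying Lemma \ref{Q-lemma}(ii) with $r=0$, $t_1=0$, $t_2=t$ to the integrand $f(\tau,w_1)-f(\tau,w_2)$ yields the master inequality
\begin{equation*}
\|\F(w_1)(t)-\F(w_2)(t)\|_s^2\le\frac{\kappa^2\theta^{2s-\beta}}{\Gamma(\alpha)}\int_0^t(t-\tau)^{\alpha-1}\tau^{-2\nu}\|w_1(\tau)-w_2(\tau)\|_s^2\,d\tau,
\end{equation*}
together with the analogous inequality for $\|u(t)-g(t)\|_s^2$, since $\F(0)=g$.

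In the case $\nu<\alpha/2$, combining the latter with $\|u(t)\|_s^2\le 2\|g\|_{s,t}^2+2\|u(t)-g(t)\|_s^2$ produces a singular Volterra inequality whose exponent $q=2\nu$ is strictly below $\alpha$, so Lemma \ref{gronwall} applies directly to give the bound (\ref{global-bound}) and, applied to $u_1-u_2$, to give uniqueness. For existence I would follow the Leray--Schauder scheme used in the proof of Theorem \ref{existence-thm}: the uniform Gronwall a priori bound supplies a ball in $C([0,T];D(A^s))$ that absorbs every solution of $u=\mu\F(u)$, $\mu\in(0,1)$, and the remainder of that argument transfers verbatim.

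In the case $\nu=\alpha/2$ with the strict inequality $\kappa<\theta^{\beta/2-s}(\Gamma(1-\alpha))^{-1/2}$, the singular integral collapses to the constant $\int_0^t(t-\tau)^{\alpha-1}\tau^{-\alpha}d\tau=\Gamma(\alpha)\Gamma(1-\alpha)$, so the master estimate reduces to a strict sup-norm contraction
\begin{equation*}
\sup_{t\in[0,T]}\|\F(w_1)(t)-\F(w_2)(t)\|_s^2\le\kappa^2\theta^{2s-\beta}\Gamma(1-\alpha)\sup_{t\in[0,T]}\|w_1(t)-w_2(t)\|_s^2
\end{equation*}
with ratio $<1$, and Banach's fixed point theorem handles both existence and uniqueness.

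The main obstacle is the borderline case $\nu=\alpha/2$, $\kappa=\theta^{\beta/2-s}(\Gamma(1-\alpha))^{-1/2}$, $s<\beta/2$, in which the contraction constant just computed is exactly $1$. Existence is still covered by Theorem \ref{existence-thm} thanks to the added hypotheses (\ref{F2}) and (\ref{m-inq}). For uniqueness, I would replace the crude bound $\sup_{\lambda\ge\theta}\lambda^{2s-\beta}\le\theta^{2s-\beta}$ by the sharper supremum genuinely appearing in (\ref{Q-es}), namely $M(t):=\sup_{\lambda\ge\theta}\lambda^{2s-\beta}(1-\fm_\alpha(-\lambda^\beta t^\alpha))$. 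A change of variable $y=\lambda^\beta t^\alpha$ combined with $s<\beta/2$ shows $M(t)=O(t^{\alpha(\beta-2s)/\beta})\to 0$ as $t\to 0^+$, so on a small initial interval $[0,T_1]$ the master estimate becomes a strict sup-norm contraction with constant $\kappa^2 M(T_1)\Gamma(1-\alpha)<1$, giving uniqueness on $[0,T_1]$. For the continuation, once two solutions agree on $[0,T_*]$ their difference on $[T_*,T_*+\delta]$ satisfies, by Lemma \ref{Q-lemma}(ii) applied on $[T_*,t]$, a Volterra inequality whose kernel in $\tau$ is no longer singular (because $\tau\ge T_*>0$); Lemma \ref{gronwall} with $q=0$ and $\delta$ small closes each step, and finitely many such steps cover $[0,T]$.
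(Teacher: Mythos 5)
Your argument is sound, but in two of the three cases it takes a genuinely different route from the paper. For $\nu<\alpha/2$ the paper never invokes Leray--Schauder: starting from the same master inequality \eqref{T-Lipschitz} it shows by induction that
\[
\dnorm{\F^k(w_1)(t)-\F^k(w_2)(t)}^2_s\le\frac{\Gamma(1-2\nu)\left(\theta^{2s-\beta}\kappa^2\right)^k t^{k(\alpha-2\nu)}}{\Gamma\left(k(\alpha-2\nu)-2\nu+1\right)}\,|w_1-w_2|^2_{s,T},
\]
so some power $\F^{k_0}$ is a contraction and Banach's theorem gives existence and uniqueness simultaneously, with no compactness required; Lemma \ref{gronwall} is then used only to derive \eqref{global-bound}, exactly as you do. Your split (Gronwall for uniqueness, Leray--Schauder for existence) is valid, but it imports the complete-continuity verification of Theorem \ref{existence-thm} into a case where it is unnecessary, and it tacitly needs the growth data of that theorem (finiteness of $m_T$, i.e.\ that $\F(0)=g$ is well defined) for the compactness step. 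The case $\nu=\alpha/2$, $\kappa<\theta^{\beta/2-s}(\Gamma(1-\alpha))^{-1/2}$ is handled identically in both. In the borderline case the paper's uniqueness argument is much shorter than yours: evaluating at a point $\xi$ where the sup norm of $\F(w_1)-\F(w_2)$ is attained and using the \emph{strict} inequality of Lemma \ref{Q-lemma}(ii), it obtains $|\F(w_1)-\F(w_2)|_{s,T}<|w_1-w_2|_{s,T}$ for $w_1\neq w_2$ (inequality \eqref{T0-Lipschitz}), which immediately forbids two distinct fixed points. Your alternative --- the scaling computation showing $\sup_{\lambda\ge\theta}\lambda^{2s-\beta}\left(1-\fm_\alpha(-\lambda^\beta t^\alpha)\right)=O\left(t^{\alpha(\beta-2s)/\beta}\right)$, hence a genuine contraction on a small initial interval, followed by a continuation step on $[T_*,T_*+\delta]$ where $\tau^{-\alpha}$ is no longer singular --- is correct, is the one place where the hypothesis $s<\beta/2$ enters transparently, and does not rely on the delicate strictness of \eqref{Q-es}; it is essentially a quantitative version of the Edelstein-type argument the paper only mentions in the remark following the theorem, at the cost of longer continuation bookkeeping.
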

\begin{proof}
For $w\in C([0,T],D(A^s))$, we put
\[
   \F(w)(t)
 =
     \fm_\alpha \big(- t^\alpha A^\beta \big) \zeta
    +
    \int_0^t
       \E_{\alpha, \beta}(A, t, \tau)
       f(\tau, w(\tau))
    \ud \tau.
\]
Choosing $t_1=0, t_2=t, r=0$  in Lemma \ref{Q-lemma} gives
$\sup_{\lambda\geq\theta}\lambda^{2s-\beta}H_0(\lambda,t_1,t_2)\leq \theta^{2s-\beta}$. Hence, we obtain in view of Lemma \ref{Q-lemma}
\begin{eqnarray}
  \dnorm{\F(w_1)(t)-\F(w_2)(t)}^2_s
&\le &
   \frac{1}{\Gamma(\alpha)}
   \theta^{2s-\beta}
   \int_0^t
      (t -\tau)^{\alpha-1}  \dnorm{f(\tau, w_1(\tau))-f(\tau, w_2(\tau)}^2
   \ud \tau
\nn
\\
&\le &
   \frac{1}{\Gamma(\alpha)}
   \theta^{2s-\beta}
   \kappa^2\int_0^t
      (t -\tau)^{\alpha-1} \tau^{-2\nu} \dnorm{w_1(\tau)-w_2(\tau)}_s^2
   \ud \tau.
\label{T-Lipschitz}
\end{eqnarray}
So we have
\begin{eqnarray*}
    \dnorm{\F(w_1)(t)-\F(w_2)(t)}^2_s
&\le &
    \frac{1}{\Gamma(\alpha)}
    \theta^{2s-\beta} \kappa^2 |w_1-w_2|_{s,T}^2
    \int_0^t(t-\tau)^{\alpha-1}\tau^{-2\nu}d\tau
 \\
&=&
   \frac{1}{\Gamma(\alpha)} B(\alpha,1-2\nu)
   \theta^{2s-\beta} \kappa^2
   |w_1-w_2|_{s,T}^2 t^{\alpha-2\nu}
\\
&=&
   \frac{\Gamma{(1-2\nu)}}{\Gamma(\alpha+1-2\nu)}
   \theta^{2s-\beta}\kappa^2|w_1-w_2|_{s,T}^2 t^{\alpha-2\nu}.
\end{eqnarray*}
We consider the case $\nu<\alpha/2$. For $w_1,w_2\in C([0,T],D(A^s))$, using the similar technique as in \cite{trong2017potential},
we can prove by induction that
\[
    \dnorm{\F^k(w_1)(t)-\F^k(w_2)(t)}^2_s
\le
    \frac{
       \Gamma(1-2\nu)
       \left(\theta^{2s-\beta} \kappa^2\right)^k
       t^{k(\alpha-2\nu)}
       }{
       \Gamma(k(\alpha-2\nu)-2\nu+1)
       }
       |w_1-w_2|^2_{s,T}
       .
\]
We note that
\[
   \lim_{k\to\infty}
    \frac{
          \Gamma(1-2\nu)
          \left(\theta^{2s-\beta} \kappa^2\right)^k
          T^{k(\alpha-2\nu)}
       }{
          \Gamma\left(k(\alpha-2\nu)-2\nu+1\right)
       }
=0.
\]
Hence there is a $k_0\in\mathbb{N}$ such that
\[
    \frac{
          \Gamma(1-2\nu)
          \left(\theta^{2s-\beta} \kappa^2\right)^{k_0}
          T^{k_0(\alpha-2\nu)}
       }{
          \Gamma\left(k_0(\alpha-2\nu)-2\nu+1\right)
       }
\le
   \frac{1}{2}
\]
which gives
\[
    | \F^{k_0}(w_1)-\F^{k_0}(w_2)|^2_{s,T}
\le
    \frac{
          \Gamma(1-2\nu)
          \left(\theta^{2s-\beta} \kappa^2\right)^{k_0}
          T^{k_0(\alpha-2\nu)}
       }{
          \Gamma\left(k_0(\alpha-2\nu)-2\nu+1\right)
       }
          |w_1-w_2|^2_{s,T}
\le
   \frac{1}{2}
   |w_1-w_2|^2_{s,T}
,
\]
i.e., $\F^{k_0}$ is a contraction in $C([0,T],D(A^s))$. Hence, the exists a unique fixed point $u\in C([0,T],D(A^s)) $
satisfying $u=\F^{k_0}(u)$. We deduce that $\F u=\F^{k_0}(\F u)$, i.e., $\F u$ is also a fixed point of the operator $\F^{k_0}$. Hence $u=\F u$.

We give the estimate of $u$. In fact, from \eqref{T-Lipschitz} we obtain
\[
   \| u(t)-g \|^2_s
=
   \dnorm{\mathcal{F}u(t)-\mathcal{F}(0)(t)}^2_s
\le
   \frac{1}{\Gamma(\alpha)} \theta^{2s-\beta}\kappa^2
   \int_0^t
      (t -\tau)^{\alpha-1} \tau^{-2\nu} \dnorm{u(\tau)-0}_s^2
   \ud \tau.
\]
Hence
\[
    \| u(t) \|_s^2
\le
    2\| g(t) \|_s^2+2\| u(t)-g\|^2_s
\le
   2\| g(t)\|^2_s+\frac{2}{\Gamma(\alpha)}\theta^{2s-\beta}\kappa^2
   \int_0^t
      (t -\tau)^{\alpha-1} \tau^{-2 \nu} \dnorm{u(\tau)}_s^2
   \ud \tau.
\]
Using \eqref{u-bound} of Lemma \ref{gronwall}, we obtain the inequality of the Theorem.

Finally, we consider the case $\nu=\alpha/2$. We can find a $\xi\in (0,T]$ such that
$  \dnorm{\F(w_1)(\xi)-\F(w_2)(\xi)}^2_s=\sup_{0\leq t\leq T}  \dnorm{\F(w_1)(t)-\F(w_2)(t)}^2_s$.
Lemma \ref{Q-lemma} gives
\begin{eqnarray*}
  \dnorm{\F(w_1)(\xi)-\F(w_2)(\xi)}^2_s
&< &
   \frac{1}{\Gamma(\alpha)}\theta^{2s-\beta}
   \int_0^\xi
      (\xi -\tau)^{\alpha-1}  \dnorm{f(\tau, w_1(\tau))-f(\tau, w_2(\tau)}^2
   \ud \tau
\nn
\\
&\le &
   \frac{1}{\Gamma(\alpha)} \theta^{2s-\beta}
   \kappa^2
   \int_0^\xi
      (\xi -\tau)^{\alpha-1} \tau^{-\alpha} \dnorm{w_1(\tau)-w_2(\tau)}_s^2
   \ud \tau
\\
&\le &
   \frac{1}{\Gamma(\alpha)} B(\alpha, 1-\alpha) \theta^{2s-\beta}\kappa^2
   |w_1-w_2|_{s,T}^2
\\
&=&
   \Gamma(1-\alpha) \theta^{2s-\beta}\kappa^2
   |w_1-w_2|_{s,T}^2.
\end{eqnarray*}
It follows
\begin{equation}
|\mathcal{F}(w_1)-\mathcal{F}(w_2)|_{s,T}<|w_1-w_2|_{s,T}.
\label{T0-Lipschitz}
\end{equation}
~
If $\kappa < \theta^{\beta/2-s} \left( \Gamma(1-\alpha)\right)^{-1/2}$ then $\F$ is a contraction in $C([0,T],D(A^s))$.
Consequently, the problem (\ref{mild-solution}) has a unique solution in $C([0,T],D(A^s))$.
~
\\
Finally, we consider the case $\kappa=\theta^{\beta/2-s} \left( \Gamma(1-\alpha)\right)^{-1/2}$.
From Theorem \ref{existence-thm}, the problem \eqref{mild-solution} has a solution
$u\in C([0,T],D(A^s))$. From  the inequality \eqref{T0-Lipschitz} we deduce
that the solution is unique.
\end{proof}
\begin{remark}
We can use the Edelstein fixed point theorem (see, e.g., \cite{Gorenflo}, Chap. 7) to obtain the desired result for the case $\nu=\alpha/2, \kappa=\theta^{\beta/2-s} \left( \Gamma(1-\alpha)\right)^{-1/2}$. We note that if we put $u_{n+1}=\mathcal{F}(u_n)$ then the sequence $(u_n)$ converges to the solution u in $C([0,T],D(A^s))$.
\end{remark}

Now, we investigate the existence and uniqueness of the solution of the problem with local source defined in (\ref{local-lips}).
In addition, we study the dependence of the solution with respect to the fractional order $\alpha, \beta$ and the
initial data $\zeta$. To emphasize the dependence of the solution $u$ on these given data,
let us write it by $u_{\zeta, \alpha, \beta}$.
We have the following theorem.

\begin{theorem}
Let $\alpha\in (0, 1)$, $\beta\in (0, +\infty)$, $s\in [0, \beta/2]$, $\nu<\alpha/2$,
and let $\zeta$ be the initial data
defined in (\ref{initial value}) such that $\zeta \in D(A^{\beta/2})$.
Let the source function $f$ satisfy  \textbf{Assumption F1}
and $g \in C\left([0, T], D(A^s)\right)$ with $g$ defined in (\ref{g-condition}).
~
~
Then, for any $\m> 2 \dnorm{\zeta}_{\beta/2}$, we have

\paragraph{(i).} (Local existence) There exists a $T_\m>0$ such that the FIVP has a unique mild solution
$u_{\zeta, \alpha, \beta}$ which  belongs to $C([0, T_\m]; D(A^s))$.

\paragraph{(ii).} (Uniqueness) If $V,W\in C([0,T];D(A^s))$ are  solutions of \eqref{mild-solution} on $[0,T]$
then $V=W$.

\paragraph{(iii).} (Maximal existence) Let
$$T_{\zeta, \alpha, \beta}=\sup\{T>0:\ \eqref{mild-solution}\ \text{has a unique solution on}\ [0,T]\}.$$
Then the equation \eqref{mild-solution} has a unique solution $u_{\zeta,\alpha,\beta}\in C([0,T_{\zeta,\alpha,\beta});D(A^s))$. Moreover,
 we have either $T_{\zeta, \alpha, \beta} = +\infty$ or $T_{\zeta, \alpha, \beta}<+\infty$
and $\| u_{\zeta, \alpha, \beta}(t)\|_s \to \infty$ as $t \to T_{\zeta, \alpha, \beta}^-$.
Besides, if $u_{\zeta, \alpha, \beta} \in B_{s, T}(\m)$
then
\[
   \| u(t) \|_s^2
\le
   2 \Gamma(1-2\nu) \| g\|_{s, t}^2
   E_{\alpha-2\nu,1-2\nu} \left(2\theta^{2s-\beta} L^2(\m) t^{\alpha-2\nu}\right)
    ,
\]
for any $t\in [0,T]$.
\end{theorem}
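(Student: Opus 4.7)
\medskip
\noindent\textbf{Proof plan.} The approach is the standard local-existence / uniqueness / blow-up trichotomy for semilinear evolution equations, adapted here to the singular source $t^{-\nu}$ and the spectral/Mittag--Leffler framework of Lemmas \ref{Q-lemma}, \ref{gronwall}, and \ref{trong1}.

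For part (i), I will apply a Banach fixed point argument to the nonlinear operator
\[
\F(w)(t) = \fm_\alpha(-t^\alpha A^\beta)\zeta + \int_0^t \E_{\alpha,\beta}(A,t,\tau) f(\tau, w(\tau)) \ud\tau
\]
on the closed ball $B_{s,T_\m}(\m)$ of $C([0,T_\m]; D(A^s))$, with $T_\m > 0$ to be chosen small. Lemma \ref{Q-lemma}(i), together with the embedding $D(A^{\beta/2}) \hookrightarrow D(A^s)$ (since $0\le s\le\beta/2$), controls the linear term in terms of $\|\zeta\|_{\beta/2}$. Lemma \ref{Q-lemma}(ii) with $t_1=0$, $t_2=t$, $r=0$, combined with \textbf{Assumption F1}, gives for $w_1, w_2 \in B_{s,T_\m}(\m)$
\[
\|\F(w_1)(t) - \F(w_2)(t)\|_s^2 \le \frac{\theta^{2s-\beta}L^2(s,\m)}{\Gamma(\alpha)} \int_0^t (t-\tau)^{\alpha-1} \tau^{-2\nu} \|w_1(\tau) - w_2(\tau)\|_s^2 \ud\tau.
\]
The Beta integral evaluates to $B(\alpha, 1-2\nu)\,T_\m^{\alpha-2\nu}$, which is finite thanks to $\nu < \alpha/2$ and tends to $0$ as $T_\m\to 0^+$. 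Taking $w_2=0$ yields the self-map check, where the slack $\m > 2\|\zeta\|_{\beta/2}$ is precisely what is needed. Iterating, $\F^k$ becomes a strict contraction for some $k$ (exactly as in the proof of Theorem \ref{global-existence}), producing the unique fixed point.

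Part (ii) then follows directly from Lemma \ref{gronwall}: any two solutions $V, W \in C([0,T]; D(A^s))$ are uniformly bounded by some $M$, and Lemma \ref{Q-lemma}(ii) combined with \textbf{Assumption F1} yields
\[
\|V(t) - W(t)\|_s^2 \le \frac{\theta^{2s-\beta} L^2(s,M)}{\Gamma(\alpha)} \int_0^t (t-\tau)^{\alpha-1} \tau^{-2\nu} \|V(\tau) - W(\tau)\|_s^2 \ud\tau,
\]
so Lemma \ref{gronwall} applied with $v \equiv 0$ forces $V = W$. For part (iii), gluing local solutions via uniqueness produces the maximal solution on $[0, T_{\zeta,\alpha,\beta})$. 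For the blow-up alternative, assuming $T_{\zeta,\alpha,\beta} < \infty$ and $\|u_{\zeta,\alpha,\beta}(t)\|_s$ remaining bounded, I would restart part (i) from $t_0$ close to $T_{\zeta,\alpha,\beta}$ with initial datum $u_{\zeta,\alpha,\beta}(t_0)$ to extend the solution strictly past $T_{\zeta,\alpha,\beta}$, contradicting maximality. The quantitative estimate is obtained by writing $\|u(t)\|_s^2 \le 2\|g(t)\|_s^2 + 2\|u(t)-g(t)\|_s^2$, bounding the difference via Lemma \ref{Q-lemma}(ii) and \textbf{Assumption F1} (with the local Lipschitz constant $L(\m)$), and invoking Lemma \ref{gronwall} --- precisely the computation carried out at the end of the proof of Theorem \ref{global-existence}.

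The main obstacle is the restart step in part (iii): the local-existence statement in (i) requires initial data in $D(A^{\beta/2})$, whereas the solution only a priori lives in $C([0,T]; D(A^s))$ with $s \le \beta/2$. Overcoming this will require either establishing the immediate smoothing property $u(t) \in D(A^{\beta/2})$ for every $t > 0$ (which is plausible from the spectral decay $\fm_\alpha(-\lambda^\beta t^\alpha) \lesssim (\Gamma(1-\alpha))^{-1}(1+\lambda^\beta t^\alpha)^{-1}$ in Lemma \ref{trong1}(b), yielding $\|A^{\beta/2}u(t)\|\lesssim t^{-\alpha/2}\|\zeta\|_s$ for the linear part), or deriving a variant of part (i) valid for initial data only in $D(A^s)$. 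The hypothesis $\m > 2\|\zeta\|_{\beta/2}$ in the statement strongly suggests the authors intend the former route, consistent with the higher regularity of the original datum.
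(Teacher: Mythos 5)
Your parts (i) and (ii) are fine and close in spirit to the paper's argument (the paper routes both through a truncated source $f_M(t,v)=f\bigl(t, Mv/\max\{M,\|v\|_s\}\bigr)$, which is globally Lipschitz, and invokes Theorem \ref{global-existence}; your direct contraction on the ball plus the Gronwall uniqueness via Lemma \ref{gronwall} achieves the same). The genuine gap is in the blow-up alternative of part (iii). Your plan is to restart the problem at a time $t_0$ close to $T_{\zeta,\alpha,\beta}$ with initial datum $u(t_0)$. For a Caputo-type equation this is not a legitimate move: the Caputo derivative is nonlocal in time with memory anchored at $t=0$, and the mild formulation \eqref{mild-solution} has no flow/semigroup property --- $E_\alpha(-(t+t_0)^\alpha A^\beta)\zeta$ is not $E_\alpha(-t^\alpha A^\beta)E_\alpha(-t_0^\alpha A^\beta)\zeta$, and the Duhamel term on $[t_0,t]$ does not reproduce the history integral over $[0,t_0]$. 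So the "restarted" problem is a different equation, and solving it does not extend $u$ as a solution of \eqref{mild-solution}. The regularity obstacle you flag ($u(t_0)\in D(A^s)$ only, not $D(A^{\beta/2})$) is real but secondary, and the smoothing route you sketch would not repair the missing flow property.

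The paper's extension argument avoids restarting altogether: assuming $T_{\zeta,\alpha,\beta}<\infty$ and $\|u(t)\|_s\le M$ on $[0,T_{\zeta,\alpha,\beta})$, one solves the truncated equation \eqref{MM-mild-solution} with source $f_M$ on the longer interval $[0,\delta+T_{\zeta,\alpha,\beta}]$ \emph{from the original datum} $\zeta$; Theorem \ref{global-existence} gives a unique global solution there because $f_M$ is globally Lipschitz. By uniqueness this truncated solution coincides with $u$ on $[0,T_{\zeta,\alpha,\beta})$, and by its continuity on the closed interval it remains in the ball where $f_M=f$ slightly past $T_{\zeta,\alpha,\beta}$, yielding a solution of the untruncated equation on $[0,T_{\zeta,\alpha,\beta}+\delta']$ and the desired contradiction. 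You would need to adopt this truncation device (or some equivalent global-in-time surrogate problem) to close part (iii); the rest of your outline, including the final a priori estimate via $\|u(t)\|_s^2\le 2\|g(t)\|_s^2+2\|u(t)-g(t)\|_s^2$ and Lemma \ref{gronwall}, matches the paper.
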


\begin{proof}
Before proving the theorem, we set up some notations.
We will use Theorem \ref{global-existence} to prove Part (i). For $M>0$, we put
\[
  f_{M}(t,v)
  =
  f\left(t,\frac{M v}{\max\{M,\| v\|_s\}}\right)\ \text{for}\ v\in D(A^s).
\]
Verifying directly, we can prove that the function $f_M$ is global Lipschitz with respect to the variable $v$, i.e.,
\[
  \dnorm{ f_M(t,w_1)-f_M(t, w_2) }
  \leq
  \kappa_0(M) t^{-\nu} \dnorm{ w_1-w_2}_s
  \ \
  \text{for all}\ w_1,w_2\in D(A^s).
\]
We consider the problem of finding $U\in C([0,T],D(A^s))$ satisfying
\\
\begin{equation}
\label{M-mild-solution}
U(t)
 =
     \fm_\alpha \big(- t^\alpha A^\beta \big) \zeta
    +
    \int_0^t
       \E_{\alpha, \beta}(A, t, \tau)
       f_M(\tau, U(\tau))
    \ud \tau.
\end{equation}
From Theorem \ref{global-existence}, for any $T>0$, the equation \eqref{M-mild-solution} has
 a unique solution $U_{M,T}\in C([0,T],D(A^s))$.

\paragraph{(i).} For any $m>0$, we put $   \m = 2 \| \zeta\|+ m$.
 Since $U_T(0)=\zeta$, we can use the continuity of $U_T$  to find a constant $T_\m\in (0,T]$ such that $\sup_{0\leq t\leq T_\m}\| U_{\m,T}(t)\|_s\leq \m$.
In this case $f_\m(t,U_{\m,T}(t))=f(t,U_{\m,T}(t))$ for all $t\in [0,T_\m]$ and $U_{\m,T}(t)$ satisfies \eqref{mild-solution} for $t\in[0,T_\m]$.

\paragraph{(ii).} If $V,W\in C([0,T];D(A^s))$ are solutions of \eqref{mild-solution}, we denote
 $$\mu=\max\{\sup_{0\leq t\leq T}\| V(t)\|_s, \sup_{0\leq t\leq T}\| W(t)\|_s\}$$
and consider the equation
\begin{equation}
\label{Mu-mild-solution}
U(t)
 =
     \fm_\alpha \big(- t^\alpha A^\beta \big) \zeta
    +
    \int_0^t
       \E_{\alpha, \beta}(A, t, \tau)
       f_\mu(\tau, U(\tau))
    \ud \tau.
\end{equation}
From Theorem \ref{global-existence}, the equation \eqref{Mu-mild-solution} has a unique solution
$U_{\mu,T}\in C([0,T]; D(A^s))$.  Since $\| V(t)\|_s,\| W(t)\|_s\leq \mu$ for $t\in[0,T]$, we have
$f(t,V(t))=f_\mu(t,V(t)),\ f)t,W(t))=f_\mu(t,W(t))$. Hence, $V,W$ satisfies \eqref{Mu-mild-solution}. By
Theorem \ref{global-existence}, we have $V=U_{\mu,T}=W$.

\paragraph{(iii).} For every $T\in (0,T_{\zeta,\alpha,\beta})$, the equation \eqref{mild-solution} has a unique solution $U_T\in C([0,T];D(A^s)$. From Part (ii), for $T_1,T_2\in (0,T_{\zeta,\alpha,\beta})$, $T_1<T_2$, we have $U_{T_1}(t)=U_{T_2}(t)$ for $t\in [0,T_1]$. Hence, we can put $u_{\zeta,\alpha,\beta}(t)=U_T(t)$ for all
$t\in [0,T], T\in (0,T_{\zeta,\alpha,\beta})$. The function $u_{\zeta,\alpha,\beta}$ is the unique solution of \eqref{mild-solution} on $(0,T_{\zeta,\alpha,\beta})$.

We prove the second result of Part (iii). Assume by contradiction that $T_{\zeta,\alpha,\beta}<\infty$ and $\| u_{\zeta,\alpha,\beta}(t)\|_s\leq M$ for every $t\in [0,T_{\zeta,\alpha,\beta})$.
We consider the equation
 \begin{equation}
\label{MM-mild-solution}
U(t)
 =
     \fm_\alpha \big(- t^\alpha A^\beta \big) \zeta
    +
    \int_0^t
       \E_{\alpha, \beta}(A, t, \tau)
       f_M(\tau, U(\tau))
    \ud \tau.
\end{equation}
From Theorem \ref{global-existence}, the equation \eqref{MM-mild-solution} has a unique solution
$U_{M,\delta+T_{\zeta,\alpha,\beta}}$. From Part (ii) we have $u_{\zeta,\alpha,\beta}(t)=U_{M,\delta+T_{\zeta,\alpha,\beta}}(t)$ for every $t\in [0,T_{\zeta,\alpha,\beta})$. Since $U_{M,\delta+T_{\zeta,\alpha,\beta}}
\in C([0,\delta+T_{\zeta,\alpha,\beta}]; D(A^s))$, we can find a constant $\delta'\in (0,\delta)$ such that
$\| U_{M,\delta+T_{\zeta,\alpha,\beta}}(t)\|_s\leq M$ for $t\in [0,\delta'+T_{\zeta,\alpha,\beta}]$.
Hence the equation
\eqref{mild-solution} has a unique solution on $[0,T_{\zeta,\alpha,\beta}+\delta']$. It follows that
$T_{\zeta,\alpha,\beta}+\delta'\leq T_{\zeta,\alpha,\beta}$, which is a contradiction.

Finally, the proof of the last inequality of the theorem is similar to the inequality (\ref{global-bound}).
Hence we omit it.
This completes the proof of the theorem.
\end{proof}

In the next theorem, we state some stability of solution of the initial problem with respect to
the fractional orders and the initial data.
We have the following result.

\begin{theorem}
\label{thrm2}

Let $0<\alpha_*<\alpha^*<2$, $\alpha^*<2\alpha_*$, $0<\beta_*<\beta^*$, and $\Delta$ as in (\ref{delta}).
Let $(\alpha, \beta), (\alpha_k, \beta_k)\in \Delta$
such that $(\alpha_k, \beta_k) \to (\alpha, \beta)$,
and let $\zeta, \, \zeta_k \in D(A^{\beta^*/2})$ such that $\zeta_k \to \zeta$ in $D(A^{\beta^*/2})$
as $k\to\infty$.
~
~
Let the source function $f$ satisfy the {\bf Assumption F1} for every $s\in [\beta_*/2,\beta^*/2]$ such that
\[
   L(\m)
=
   \sup_{s\in [\beta_*/2,\beta^*/2]}L(s,\m)<\infty \ \ \text{for every}\ \m>0.
\]
Suppose that $f(.,0) \in \cc^\alpha(\ct) $ for any $\alpha \in [\alpha_*, \alpha^*]$
and for every $\ct \in (0, +\infty)$,
then, for $T\in (0, T_{\zeta, \alpha, \beta}]$,
there exist a number $\m_T>0$ and a number $k_T$ large enough such that
$T \le T_{\zeta_k, \alpha_k, \beta_k}$
and
$
  u_{\zeta, \alpha, \beta}, \, u_{\zeta_k, \alpha_k, \beta_k}
  \in
  B_{\min\{\beta/2, \beta_k/2\}, T}(\m_T)$
for any $k \ge k_T$.
In addition, the following results hold.

\paragraph*{(i).}

If $p \in [\beta_*/2, \beta/2)$, then
\begin{equation}
\label{lim1}
   \lim_{k\to \infty}
   |u_{\zeta_k, \alpha_k, \beta_k} - u_{\zeta, \alpha, \beta}|_{p, T} = 0.
\end{equation}

\paragraph*{(ii).}

If $\beta_k \to \beta^-$ as $k \to \infty$, then
\begin{equation}
\label{lim2}
   \lim_{k\to \infty}
   |u_{\zeta_k, \alpha_k, \beta_k} - u_{\zeta, \alpha, \beta}|_{\beta_k/2, T} = 0.
\end{equation}

\paragraph*{(iii).}

If we suppose further that $\zeta_k, \zeta \in D\left( A^{\beta^*/2+r_1}\right)$ such that
$\zeta_k \to \zeta$ in $D\left( A^{\beta^*/2+r_1}\right)$ for some $r_1>0$.
We also suppose that $f(t, \vartheta) \in C([0, T], D\left( A^{r_2}\right))$
for some $r_2>0$ and for any $\vartheta \in B_{s,T}(\m_T)$.
Then, there exists a constant $A$ independent of $\zeta, \zeta_k$ such that
\begin{equation}
\label{lim3}
   \snorm{u_{\zeta_k, \alpha_k, \beta_k} - u_{\zeta, \alpha, \beta}}_{s, T}
\le
   A\| \zeta - \zeta_k\|
   +
   B  (|\alpha - \alpha_k| +|\beta - \beta_k|) ^{\frac{\gamma_2}{2(\gamma_1+\gamma_2+2)}},
\end{equation}
where $B=B(\alpha_*,\alpha^*, \beta_*, \beta^*, T)$,
$\gamma_1 = \max\{ \beta^*+2(s-r_1),2(s-r_2), 0\}$, and $\gamma_2=\min\{\beta^*+2(r_1-s), 2r_2\}$.
\end{theorem}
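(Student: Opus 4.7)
The plan is to compare the two mild-solution integral equations
$u_{\zeta,\alpha,\beta}(t) = \fm_\alpha(-t^\alpha A^\beta)\zeta + \int_0^t \E_{\alpha,\beta}(A,t,\tau) f(\tau,u_{\zeta,\alpha,\beta}(\tau))\,\ud\tau$
and its analogue with $(\zeta_k,\alpha_k,\beta_k)$, ultimately reducing everything to a singular Volterra inequality closed by Lemma \ref{gronwall}. First I would combine the a priori bound \eqref{global-bound} with the uniform Mittag--Leffler estimate of Lemma \ref{trong1}(a) and the continuity of the relevant coefficients in $(\alpha,\beta)\in\Delta$ to construct a common radius $\m_T$ such that both $u_{\zeta,\alpha,\beta}$ and $u_{\zeta_k,\alpha_k,\beta_k}$ lie in $B_{\min\{\beta/2,\beta_k/2\},T}(\m_T)$ for all $k$ large; the hypothesis $\zeta_k\to\zeta$ in $D(A^{\beta^*/2})$ is what makes this comparison possible. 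This step also yields the uniform local Lipschitz constant $L(\m_T)$ from Assumption F1 that is used in every subsequent bound.

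Next I would subtract the two integral equations and split the difference into the initial-datum piece $\fm_{\alpha_k}(-t^{\alpha_k}A^{\beta_k})\zeta_k - \fm_\alpha(-t^\alpha A^\beta)\zeta$, the kernel-perturbation integral $\int_0^t[\E_{\alpha_k,\beta_k}-\E_{\alpha,\beta}](A,t,\tau) f(\tau,u_{\zeta,\alpha,\beta}(\tau))\,\ud\tau$, and the Lipschitz-in-$u$ integral $\int_0^t \E_{\alpha_k,\beta_k}(A,t,\tau)[f(\tau,u_{\zeta_k,\alpha_k,\beta_k}(\tau))-f(\tau,u_{\zeta,\alpha,\beta}(\tau))]\,\ud\tau$. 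Using Assumption F1 and Lemma \ref{Q-lemma}, the last piece is bounded in $\|\cdot\|_p$ by $C\int_0^t(t-\tau)^{\alpha_k-1}\tau^{-2\nu}\|u_{\zeta_k,\alpha_k,\beta_k}(\tau)-u_{\zeta,\alpha,\beta}(\tau)\|_p^2\,\ud\tau$, so that Lemma \ref{gronwall} collapses the whole analysis to an estimate of the two ``deterministic'' remainder terms, call them $G_k(t)$.

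For parts (i) and (ii), I would express $G_k(t)$ through the spectral decomposition of $A$, apply the pointwise estimates of Lemma \ref{trong1}(c)--(d), and invoke dominated convergence against $\ud\|\es\zeta\|^2$ and the spectral measure of $f(\tau,u_{\zeta,\alpha,\beta}(\tau))$: the hypothesis $\zeta\in D(A^{\beta^*/2})$ supplies the integrable majorant since $p<\beta/2$ in (i), while in (ii) the fact that $\beta_k\uparrow\beta$ provides just enough room in the exponent $2(s-\beta/2)$ to keep the majorant integrable. The data-shift contribution $\fm_{\alpha_k}(-t^{\alpha_k}A^{\beta_k})(\zeta_k-\zeta)$ is controlled by Lemma \ref{Q-lemma}(i).

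For part (iii), a quantitative rate is needed, and the crucial technical step is a frequency cut-off interpolation. I would split each spectral integral in $G_k(t)$ at a threshold $R>0$: on $\lambda\le R$, Lemma \ref{trong1}(c)--(d) gives a bound of size $R^{\gamma_1+2}(1+\ln R)^2(|\alpha-\alpha_k|+|\beta-\beta_k|)^2$, while on $\lambda>R$ the Mittag--Leffler factor is uniformly bounded and the extra regularity $\zeta\in D(A^{\beta^*/2+r_1})$, $f(\cdot,\vartheta)\in C([0,T];D(A^{r_2}))$ allows extraction of a decay factor $R^{-\gamma_2}$. Optimizing $R^{\gamma_1+2}(1+\ln R)^2\varepsilon^2 + R^{-\gamma_2}$ in $R$ (with $\varepsilon=|\alpha-\alpha_k|+|\beta-\beta_k|$) and absorbing the logarithm into an arbitrarily small loss yields the Hölder exponent $\gamma_2/[2(\gamma_1+\gamma_2+2)]$, while the additive term $A\|\zeta-\zeta_k\|$ emerges from the $\es(\zeta_k-\zeta)$ piece. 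The main obstacle is exactly this frequency-splitting step: one must carefully track how $r_1,r_2,s,\beta^*$ combine in the low- and high-frequency bounds of both deterministic pieces so that the exponents $\gamma_1+2$ and $\gamma_2$ come out uniformly in $t\in[0,T]$, and verify that the logarithmic slack from Lemma \ref{trong1}(c) does not corrupt the final Hölder exponent once the Gronwall step converts the estimate on $G_k$ into the claimed estimate on $u_{\zeta_k,\alpha_k,\beta_k}-u_{\zeta,\alpha,\beta}$.
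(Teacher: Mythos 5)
Your proposal follows essentially the same route as the paper: the paper also reduces everything to a three-term decomposition (initial-datum piece, kernel-perturbation integral, Lipschitz-in-$u$ integral) closed by the singular Gronwall inequality of Lemma \ref{gronwall}, establishes the common radius $\m_T$ and $T\le T_{\zeta_k,\alpha_k,\beta_k}$ by a continuation argument, handles (i)--(ii) by a spectral cut-off playing the role of your dominated convergence, and obtains (iii) by exactly your frequency-splitting and optimization of the threshold, yielding the exponent $\gamma_2/(2(\gamma_1+\gamma_2+2))$. The only cosmetic difference is that the paper inserts the intermediate solution $u_{\zeta,\alpha_k,\beta_k}$ and packages the data-perturbation and order-perturbation estimates as two separate lemmas rather than subtracting the two integral equations in one step.
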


\begin{remark}
Theorem \ref{thrm2} showed that
if $\alpha \to 1^-, \beta \to 1$ then the solution of the fractional equation (\ref{main})--(\ref{initial value})
tend to the solution of classical equation
\[
u_t = A u + f(t, u)
.
\]
\end{remark}

\begin{proof}
To highlight the core of the proof, we will state three complementary results.
Readers can find the proofs of these results
in the Appendices \ref{app-step1}, \ref{app-step2}, and \ref{app-step3},
respectively.
In these results, let us put
\begin{eqnarray}
\nn
\F_{\zeta, \alpha, \beta, A}(v)(t)
 &=&
    \fm_\alpha \big(- t^\alpha A^\beta \big) \zeta
    +
    \int_0^t
       \E_{\alpha, \beta}(A, t, \tau)
       f(\tau, v(\tau))
    \ud \tau,
\end{eqnarray}
where $\E_{a, b}(., t, \tau)$ defined in Lemma \ref{Q-lemma},
and denote
$
u_{\zeta, \alpha,\beta}, u_{\xi, \wt{\alpha}, \wt{\beta}}
$ and
$u_{\zeta, \wt{\alpha}, \wt{\beta}}
$
the solutions of the problems
$
\F_{\zeta, \alpha, \beta, A}(u) = u
,
\F_{\xi, \wt{\alpha}, \wt{\beta}, A}(w) = w
$
and
$
\F_{\xi, \wt{\alpha}, \wt{\beta}, A}(v) = v
$,
respectively.
\begin{lemma} \label{Step1}

Let $\zeta, \xi \in D(A^s)$ be two initial data with $s \in [\beta_*/2, \beta^*/2]$,
and
let
$
   \alpha, \, \wt{\alpha} \in [\alpha_*, \alpha^*]
$,
$
   \beta, \, \wt{\beta}\in [\beta_*, \, \beta^*]
$.
Assume that
$
u_{\xi, \wt{\alpha}, \wt{\beta}}, u_{\zeta, \wt{\alpha}, \wt{\beta}} \in B_{s, T}(\m)
$
for any
$
T \in
\left(0, \min \left\{T_{\zeta, \wt{\alpha}, \wt{\beta}} , T_{\xi, \wt{\alpha}, \wt{\beta}} \right\}\right]
$.
Then, there exists $P_1$ independent of $\zeta - \xi$ such that
\begin{equation*}
\dnorm
{
   u_{\xi, \wt{\alpha}, \wt{\beta}}(t)
  -
   u_{\zeta, \wt{\alpha}, \wt{\beta}}(t)
}_s
\le
   P_1 \dnorm{\zeta - \xi}_s
\end{equation*}
for every $t\in [0, T]$.
\end{lemma}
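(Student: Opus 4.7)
The plan is to subtract the two mild-solution identities for $u_{\xi,\wt\alpha,\wt\beta}$ and $u_{\zeta,\wt\alpha,\wt\beta}$, apply the operator estimates from Lemma~\ref{Q-lemma}, and close with the Gronwall-type Lemma~\ref{gronwall}. First, I would write
\[
  u_{\xi,\wt\alpha,\wt\beta}(t) - u_{\zeta,\wt\alpha,\wt\beta}(t)
  =
  \fm_{\wt\alpha}\!\bigl(-t^{\wt\alpha}A^{\wt\beta}\bigr)(\xi-\zeta)
  +
  \int_0^t \E_{\wt\alpha,\wt\beta}(A,t,\tau)\bigl[f(\tau,u_{\xi,\wt\alpha,\wt\beta}(\tau))-f(\tau,u_{\zeta,\wt\alpha,\wt\beta}(\tau))\bigr]\ud\tau.
\]
By Lemma~\ref{Q-lemma}(i) the free term is controlled by $\dnorm{\xi-\zeta}_s$. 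For the convolution term, Lemma~\ref{Q-lemma}(ii) with $r=0$, $t_1=0$, $t_2=t$, combined with $H_0(\lambda,0,t)\le 1$ and $\lambda^{2s-\wt\beta}\le \theta^{2s-\wt\beta}$ (valid since $s\le \wt\beta/2$), yields
\[
\dnorm{Q_{\wt\alpha,\wt\beta,A}(g)(0,t)}_s^2
\le
\frac{\theta^{2s-\wt\beta}}{\Gamma(\wt\alpha)}
\int_0^t(t-\tau)^{\wt\alpha-1}\dnorm{g(\tau)}^2\ud\tau,
\]
where $g(\tau)=f(\tau,u_{\xi,\wt\alpha,\wt\beta}(\tau))-f(\tau,u_{\zeta,\wt\alpha,\wt\beta}(\tau))$.

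Next, applying \textbf{Assumption F1} on the ball $B_{s,T}(\m)$ that contains both solutions gives $\dnorm{g(\tau)}\le L(\m)\,\tau^{-\nu}\dnorm{u_{\xi,\wt\alpha,\wt\beta}(\tau)-u_{\zeta,\wt\alpha,\wt\beta}(\tau)}_s$. Combining the two bounds and using $(a+b)^2\le 2a^2+2b^2$, I arrive at the scalar integral inequality
\[
\dnorm{u_{\xi,\wt\alpha,\wt\beta}(t)-u_{\zeta,\wt\alpha,\wt\beta}(t)}_s^2
\le
2\dnorm{\xi-\zeta}_s^2
+
\frac{2\theta^{2s-\wt\beta}L^2(\m)}{\Gamma(\wt\alpha)}
\int_0^t(t-\tau)^{\wt\alpha-1}\tau^{-2\nu}\dnorm{u_{\xi,\wt\alpha,\wt\beta}(\tau)-u_{\zeta,\wt\alpha,\wt\beta}(\tau)}_s^2\ud\tau,
\]
which matches precisely the hypothesis of Lemma~\ref{gronwall} with $\alpha=\wt\alpha$, $q=2\nu$, constant $v\equiv 2\dnorm{\xi-\zeta}_s^2$, and constant $g\equiv 2\theta^{2s-\wt\beta}L^2(\m)/\Gamma(\wt\alpha)$. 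Invoking that lemma and taking square roots produces the required bound with
\[
P_1
=
\Bigl[
2\,\Gamma(1-2\nu)\;\fm_{\wt\alpha-2\nu,\,1-2\nu}\!\bigl(2\theta^{2s-\wt\beta}L^2(\m)\,T^{\wt\alpha-2\nu}\bigr)
\Bigr]^{1/2}.
\]

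I do not foresee a genuine obstacle: the steps above are essentially the same Gronwall-contraction machinery underlying Theorem~\ref{global-existence}. The only technical conditions to verify are that $q=2\nu<\wt\alpha$, which is the standing hypothesis $\nu<\wt\alpha/2$ (the boundary case $\nu=\wt\alpha/2$ would require a separate contraction argument on $C([0,T];D(A^s))$ mirroring the second half of Theorem~\ref{global-existence}), and that $P_1$ is independent of $\xi-\zeta$, which follows because the Mittag-Leffler factor is bounded uniformly for $(\wt\alpha,\wt\beta)$ in the compact set $\Delta$ by Lemma~\ref{trong1}(a). The main bookkeeping concern is that both summands on the right-hand side of the mild identity are measured in the same norm $\dnorm{\cdot}_s$, so the transition from the operator estimate in Lemma~\ref{Q-lemma}(ii) to a scalar Gronwall inequality is immediate.
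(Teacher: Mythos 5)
Your proposal is correct and follows essentially the same route as the paper's own proof in Appendix~\ref{app-step1}: subtract the two mild-solution identities, bound the free term via Lemma~\ref{Q-lemma}(i), bound the convolution term via \eqref{Q-es} with $\sup_{\lambda\ge\theta}\lambda^{2s-\wt\beta}H_0\le\theta^{2s-\wt\beta}$, apply \textbf{Assumption F1}, and close with Lemma~\ref{gronwall}, arriving at the identical constant $P_1^2=2\Gamma(1-2\nu)\,\fm_{\wt\alpha-2\nu,1-2\nu}\bigl(2\theta^{2s-\wt\beta}L^2(\m)T^{\wt\alpha-2\nu}\bigr)$. Your added remarks on the requirement $2\nu<\wt\alpha$ and on the uniformity of the Mittag--Leffler factor over $\Delta$ are correct and, if anything, slightly more careful than the paper's exposition.
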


\begin{lemma} \label{Step2}
~
Let
$
   \alpha, \, \wt{\alpha} \in [\alpha_*, \alpha^*]
$
,
$
   \beta, \, \wt{\beta}\in [\beta_*, \, \beta^*]
$
and $T \in \big(0, \min\{T_{\zeta, \alpha, \beta} , T_{\zeta, \wt{\alpha}, \wt{\beta}} \}\big]$.
Assume that $\zeta \in D(A^{\beta^*/2})$
and
$
u_{\zeta, \wt{\alpha}, \wt{\beta}}
,
u_{\zeta, \alpha, \beta}
\in
B_{s, T}(\m)
$
with $s\in [\beta_*/2, \min\{\beta/2, \wt{\beta}/2\}]$.
Then, for any $\epsilon>0$, there exist two constants $P, \, P_\epsilon>0$
which are independent of  $\alpha-\wt{\alpha}, \, \beta-\wt{\beta}$ and $t$
such that
\begin{equation*}
\dnorm
{
   u_{\zeta, \wt{\alpha}, \wt{\beta}}(t)
  -
   u_{\zeta, \alpha, \beta}(t)
}_s
\le
   P
   \left(
      \epsilon + P_\epsilon \left(|\alpha - \wt{\alpha}| + |\beta - \wt{\beta} |\right)
   \right)^{1/2},
\end{equation*}
for every $t\in [0, T]$.
\end{lemma}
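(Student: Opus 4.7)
My plan is to decompose the difference $D(t) := u_{\zeta,\wt\alpha,\wt\beta}(t) - u_{\zeta,\alpha,\beta}(t)$ into three pieces suggested by the integral equation:
\begin{align*}
I(t) &:= \bigl[E_{\wt\alpha}(-t^{\wt\alpha} A^{\wt\beta}) - E_\alpha(-t^\alpha A^\beta)\bigr]\zeta,\\
J(t) &:= \int_0^t \bigl[\E_{\wt\alpha,\wt\beta}(A,t,\tau) - \E_{\alpha,\beta}(A,t,\tau)\bigr] f(\tau, u_{\zeta,\wt\alpha,\wt\beta}(\tau))\,\ud\tau,\\
K(t) &:= \int_0^t \E_{\alpha,\beta}(A,t,\tau)\bigl[f(\tau, u_{\zeta,\wt\alpha,\wt\beta}(\tau)) - f(\tau, u_{\zeta,\alpha,\beta}(\tau))\bigr]\,\ud\tau.
\end{align*}
For $K$, Lemma~\ref{Q-lemma}(ii) together with the local Lipschitz assumption \eqref{local-lips} on $f$ yields $\dnorm{K(t)}_s^2 \leq C_\m \int_0^t (t-\tau)^{\alpha-1}\tau^{-2\nu}\dnorm{D(\tau)}_s^2\,\ud\tau$, which will feed the final Gronwall step.

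The core of the argument is estimating $I$ and $J$ by a spectral splitting at some cutoff $N=N(\epsilon)$. For $I(t)$, write $\dnorm{I(t)}_s^2 = \int_\theta^\infty \lambda^{2s}\bigl|E_{\wt\alpha}(-t^{\wt\alpha}\lambda^{\wt\beta}) - E_\alpha(-t^\alpha\lambda^\beta)\bigr|^2\,\ud\dnorm{\es\zeta}^2$ and split the integral at $\lambda=N$. On $[\theta,N]$, Lemma~\ref{trong1}(c) bounds the pointwise ML difference by $C\lambda^{\beta^*}(1+\ln\lambda)(|\alpha-\wt\alpha|+|\beta-\wt\beta|)$, producing a low-frequency contribution of order $C(N)(|\alpha-\wt\alpha|+|\beta-\wt\beta|)^2\dnorm{\zeta}^2$; since the perturbations are bounded, this is $\leq P_\epsilon(|\alpha-\wt\alpha|+|\beta-\wt\beta|)$. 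On $(N,\infty)$, use $|E_{p,r}(-z)|\leq 1/\Gamma(r)$ from Lemma~\ref{trong1}(a) to dominate by $4\int_N^\infty \lambda^{2s}\,\ud\dnorm{\es\zeta}^2$; because $s\leq\beta^*/2$ and $\zeta\in D(A^{\beta^*/2})$, this tail tends to $0$ as $N\to\infty$, so is $\leq\epsilon$ for $N=N(\epsilon)$.

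The analogous spectral split is then applied to $J(t)$. For the low-frequency part, Lemma~\ref{trong1}(d) provides $\int_0^t\bigl|\E_{\wt\alpha,\wt\beta}(\lambda,t,\tau) - \E_{\alpha,\beta}(\lambda,t,\tau)\bigr|\,\ud\tau \leq C(N)(|\alpha-\wt\alpha|+|\beta-\wt\beta|)$ uniformly for $\lambda\leq N$; combining with the bound $\dnorm{f(\tau,u_{\zeta,\wt\alpha,\wt\beta}(\tau))}\leq \dnorm{f(\tau,0)} + L(\m)\m\tau^{-\nu}$ (from \eqref{local-lips} and the boundedness hypothesis) yields $\leq P_\epsilon(|\alpha-\wt\alpha|+|\beta-\wt\beta|)$. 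For the high-frequency part, write $J = Q_1 - Q_2$ with $Q_i$ using each kernel separately, and apply Lemma~\ref{Q-lemma}(ii) restricted to the spectral tail $(N,\infty)$: since $s\leq\min\{\beta/2,\wt\beta/2\}$, the factor $\sup_{\lambda\geq N}\lambda^{2s-\beta}H_0(\lambda,t_1,t_2)$ is controlled and vanishes (or is controlled by the tail of $\dnorm{\zeta}_{\beta^*/2}$ via the source bound) as $N\to\infty$.

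Assembling $\dnorm{D(t)}_s^2 \leq 3\bigl(\dnorm{I(t)}_s^2 + \dnorm{J(t)}_s^2 + \dnorm{K(t)}_s^2\bigr) \leq 3\bigl[\epsilon + P_\epsilon(|\alpha-\wt\alpha|+|\beta-\wt\beta|)\bigr] + C_\m'\int_0^t (t-\tau)^{\alpha-1}\tau^{-2\nu}\dnorm{D(\tau)}_s^2\,\ud\tau$ and invoking Lemma~\ref{gronwall} with $q=2\nu<\alpha$ (permissible since $\nu<\alpha/2$ is implicit, being the interior case from the existence theorems) gives $\dnorm{D(t)}_s^2 \leq P^2\bigl(\epsilon + P_\epsilon(|\alpha-\wt\alpha|+|\beta-\wt\beta|)\bigr)$; taking square roots delivers the claimed bound. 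The main obstacle I anticipate is the tail control of $J(t)$: the source only lives in $H$, so the high-frequency decay must come entirely from the regularity of $\zeta$ combined with Lemma~\ref{Q-lemma}(ii) applied spectrum-wise, and some care is needed at the boundary case $s=\beta/2$ where the $\lambda^{2s-\beta}$ factor no longer provides decay and one must instead rely on absolute continuity of the spectral measure of $\zeta$.
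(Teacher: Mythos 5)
Your decomposition into $I$, $J$, $K$ and the overall strategy --- spectral cutoff at $N=N(\epsilon)$, Lemma~\ref{trong1}(c) and (d) for the low frequencies, the Lipschitz bound on $K$ feeding Lemma~\ref{gronwall} with $q=2\nu$ --- is exactly the paper's proof (the paper calls the three pieces $I_1$, $I_3$, $I_2$). The one step you leave open, the high-frequency tail of $J(t)$, is however also the one place where the mechanisms you propose would not work: the factor $\sup_{\lambda\ge N}\lambda^{2s-\beta}H_0$ is $O(1)$ rather than $o(1)$ when $s=\beta/2$, and the regularity of $\zeta$ does not enter $J$ at all, since $J$ involves only the spectral measure of $f(\tau,u(\tau))$.

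The paper's resolution is a Cauchy--Schwarz splitting of the kernel difference against that spectral measure:
\begin{align*}
\dnorm{J(t)}_s^2
&\le
\int_\theta^{+\infty}\lambda^{2s}
\left(\int_0^t\left|\E_{\alpha,\beta}(\lambda,t,\tau)-\E_{\wt{\alpha},\wt{\beta}}(\lambda,t,\tau)\right|\ud\tau\right)
\\
&\qquad\times
\left(\int_0^t\left|\E_{\alpha,\beta}(\lambda,t,\tau)-\E_{\wt{\alpha},\wt{\beta}}(\lambda,t,\tau)\right|\ud\dnorm{\es f(\tau,u(\tau))}^2\ud\tau\right).
\end{align*}
The first inner integral is computed exactly (as in \eqref{es-E-aa}) and bounded by $\lambda^{-\beta}+\lambda^{-\wt{\beta}}$, which absorbs the weight $\lambda^{2s}$ because $s\le\min\{\beta/2,\wt{\beta}/2\}$ --- so the boundary case $s=\beta/2$ causes no trouble. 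In the second inner integral the kernel difference is dominated pointwise by $C\left((t-\tau)^{\alpha_*-1}+(t-\tau)^{\alpha^*-1}\right)$, and the growth bound $\dnorm{f(\tau,w)}\le\dnorm{f(\tau,0)}+L(\m)\,\m\,\tau^{-\nu}$ makes the resulting double integral over all of $\lambda\in[\theta,+\infty)$ finite; hence its tail over $\lambda>N$ tends to zero as $N\to\infty$. The decay therefore comes from the finiteness of the total spectral mass of $f(\tau,u(\tau))$, not from $\zeta$. With that substitution your argument closes, and everything else matches the paper.
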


\begin{lemma} \label{Step3}
~
Let
$
\alpha, \, \wt{\alpha} \in [\alpha_*, \alpha^*]
$,
$
\beta, \, \wt{\beta} \in [\beta_*, \, \beta^*]
$
and
$
T \in \left(0, \min\{T_{\zeta, \alpha, \beta} , T_{\zeta, \wt{\alpha}, \wt{\beta}} \}\right]
$.
Assume that
$
u_{\zeta, \wt{\alpha}, \wt{\beta}}
,
u_{\zeta, \alpha, \beta}
\in B_{s, T}(\m)
$
with $s \in [\beta_*/2, \min\{\beta/2, \wt{\beta}/2\}]$
and
$\zeta \in D\left(A^{\beta^*/2+r_1}\right)$ for some $r_1>0$.
We suppose further that
$f(t, w) \in C([0, T], D\left(A^{r_2}\right))$
for any $w \in B_{s, T}(\m)$,
then, there exists a constant $Q_0>0$
which is independent of  $\alpha-\wt{\alpha}, \, \beta-\wt{\beta}, \, N$ and $t$ such that
\begin{equation*}
\dnorm
{
   u_{\zeta, \wt{\alpha}, \wt{\beta}}(t)
  -
   u_{\zeta, \alpha, \beta}(t)
}_s
\le
   Q_0 \left(2^{\gamma_1+2} +1\right)
   \left(
       |\alpha - \wt{\alpha}| + |\beta - \wt{\beta} |
   \right)^{\frac{\gamma_2}{2(\gamma_1+\gamma_2+2)}},
\end{equation*}
for every $t\in [0, T]$.
Herein $\gamma_1 = \max\{ \beta^*+2(s-r_1), 2(s-r_2), 0\}$,
and $\gamma_2=\min\{\beta^*+2(r_1-s), 2 r_2\}$.
\end{lemma}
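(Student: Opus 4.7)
Set $\epsilon := |\alpha - \wt{\alpha}| + |\beta - \wt{\beta}|$. I would build the proof around the decomposition
\[
u_{\zeta,\wt{\alpha},\wt{\beta}}(t) - u_{\zeta,\alpha,\beta}(t) = I_1(t) + I_2(t) + I_3(t),
\]
where $I_1(t) = \bigl(E_{\wt{\alpha}}(-t^{\wt{\alpha}} A^{\wt{\beta}}) - E_\alpha(-t^\alpha A^\beta)\bigr)\zeta$, $I_2(t) = \int_0^t (\E_{\wt{\alpha},\wt{\beta}}(A,t,\tau) - \E_{\alpha,\beta}(A,t,\tau))\, f(\tau, u_{\zeta,\wt{\alpha},\wt{\beta}}(\tau))\,\ud\tau$, and $I_3(t) = \int_0^t \E_{\alpha,\beta}(A,t,\tau)\,\bigl(f(\tau, u_{\zeta,\wt{\alpha},\wt{\beta}}(\tau)) - f(\tau, u_{\zeta,\alpha,\beta}(\tau))\bigr)\,\ud\tau$. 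The Lipschitz condition of Assumption~F1 together with Lemma~\ref{Q-lemma}(ii) (applied with $t_1=0$, $t_2=t$, $r=0$) bounds $\|I_3(t)\|_s^2$ by $C\int_0^t (t-\tau)^{\alpha-1}\tau^{-2\nu}\|u_{\zeta,\wt{\alpha},\wt{\beta}}(\tau) - u_{\zeta,\alpha,\beta}(\tau)\|_s^2\,\ud\tau$, so the Gronwall-type Lemma~\ref{gronwall} closes the loop and reduces the theorem to estimating $\sup_{t\in[0,T]}\bigl(\|I_1(t)\|_s + \|I_2(t)\|_s\bigr)$ by the advertised H\"older rate.

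I would produce that estimate by a spectral cutoff at some level $N \ge 1$, splitting each of $I_1,I_2$ by the spectral projection $S_N$ and its complement $I-S_N$. On the high-frequency part, the uniform bounds of Lemma~\ref{trong1}(a) on $E_\alpha$ and $E_{\alpha,\alpha}$ together with the additional regularity $\zeta\in D(A^{\beta^*/2+r_1})$ and $f(\tau,w)\in D(A^{r_2})$ yield
\[
\|(I-S_N)I_1(t)\|_s^2 + \|(I-S_N)I_2(t)\|_s^2 \le C_1\,N^{-\gamma_2},
\]
because the factorizations $\lambda^{2s} = \lambda^{2s-(\beta^*+2r_1)}\lambda^{\beta^*+2r_1}$ and $\lambda^{2s} = \lambda^{2s-2r_2}\lambda^{2r_2}$ produce truncation losses of $N^{-(\beta^*+2(r_1-s))}$ and $N^{-2r_2}$ respectively, whose minimum is $N^{-\gamma_2}$. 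On the low-frequency part, Lemma~\ref{trong1}(c) gives a pointwise Mittag--Leffler difference bound $C\lambda^{\beta^*}(1+\ln\lambda)\epsilon$ for $I_1$ and Lemma~\ref{trong1}(d) gives the $\tau$-integrated analogue for $I_2$; pairing these with the same regularity on $\zeta$ and $f$ and absorbing $(1+\ln N)^2 \le 4N^2$ into a polynomial factor leads to
\[
\|S_N I_1(t)\|_s^2 + \|S_N I_2(t)\|_s^2 \le C_2\,\epsilon^2\,N^{\gamma_1+2}.
\]
Summing the two estimates and choosing the cutoff $N \sim \epsilon^{-2/(\gamma_1+\gamma_2+2)}$ balances the approximation and truncation terms and yields the announced rate $\epsilon^{\gamma_2/(2(\gamma_1+\gamma_2+2))}$ after a square root; the explicit prefactor $2^{\gamma_1+2}$ in the stated constant reflects a dyadic version of that optimization.

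The main obstacle is the low-frequency estimate on $I_2$: Lemma~\ref{trong1}(d) delivers only a $\tau$-integrated bound $\int_0^t |\E_{\wt{\alpha},\wt{\beta}}(\lambda,t,\tau) - \E_{\alpha,\beta}(\lambda,t,\tau)|\,\ud\tau \le C\lambda^{\beta^*}(1+\ln\lambda)\epsilon$, whereas $\|S_N I_2(t)\|_s^2$ is naturally a spectral integral against a $\tau$-dependent measure $d\|S_\lambda f(\tau,u_{\zeta,\wt{\alpha},\wt{\beta}}(\tau))\|^2$, so a direct Fubini is unavailable. My plan to circumvent this is to pull the $\tau$-integral outside by Minkowski's inequality for the $D(A^s)$-norm, transfer the $D(A^{r_2})$-regularity of $f$ by using $\lambda^{2s}\le N^{(2s-2r_2)_+}\lambda^{2\min(s,r_2)}$ on $\{\lambda\le N\}$, and then invoke Lemma~\ref{trong1}(d) at the level of each spectral frequency before Minkowski closes the argument. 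The $2(s-r_2)$ entry of $\gamma_1$ emerges at this step, while the $\beta^*+2(s-r_1)$ entry of $\gamma_1$ arises in the same way from the simpler $\zeta$-term in $I_1$.
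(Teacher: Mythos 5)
Your overall architecture coincides with the paper's: the same three-term decomposition, the same Gronwall closure of the $I_3$ term via Lemma \ref{Q-lemma} and Lemma \ref{gronwall}, the same spectral cutoff at level $N$ with the extra regularity of $\zeta$ and $f$ converting the tails into $N^{-\gamma_2}$, and the same final optimization in $N$ (the prefactor $2^{\gamma_1+2}$ indeed comes from $N\le 2\epsilon^{-1/(\gamma_1+\gamma_2+2)}$). The gap is in your low-frequency estimate for $I_2$. You claim $\|S_N I_2(t)\|_s^2\le C\epsilon^2 N^{\gamma_1+2}$, i.e.\ two powers of $\epsilon$, and you propose to obtain it by Minkowski followed by Lemma \ref{trong1}(d). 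This does not typecheck: after Minkowski the integrand is evaluated at a fixed $\tau$, whereas Lemma \ref{trong1}(d) only controls the $\tau$-\emph{integral} $\int_0^t|\E_{\alpha,\beta}(\lambda,t,\tau)-\E_{\wt\alpha,\wt\beta}(\lambda,t,\tau)|\,\ud\tau$ and gives no pointwise-in-$\tau$ information. Moreover the kernel difference behaves like $(t-\tau)^{\alpha-1}$ near $\tau=t$ and is therefore not square-integrable in $\tau$ when $\alpha\le 1/2$, so the $L^2$-in-$\tau$ quantity that Minkowski produces cannot be controlled at all without first trading one factor of the kernel difference for the uniform bound $C\big((t-\tau)^{\alpha_*-1}+(t-\tau)^{\alpha^*-1}\big)$.

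The paper's device is precisely this trade: a Cauchy--Schwarz in $\tau$ against the measure $|\E_{\alpha,\beta}-\E_{\wt\alpha,\wt\beta}|\,\ud\tau$ splits the square into the total mass (controlled by Lemma \ref{trong1}(d), contributing \emph{one} factor of $\epsilon$ times $\lambda^{\beta^*}\ln\lambda$) times a spectrally weighted integral controlled by the uniform kernel bound (contributing \emph{no} $\epsilon$). The achievable low-frequency bound is therefore $C\epsilon N^{\gamma_1}\ln^2 N\le C\epsilon N^{\gamma_1+2}$, with $\epsilon$ to the first power. This is not cosmetic: balancing $N^{-\gamma_2}$ against $\epsilon N^{\gamma_1+2}$ forces $N\sim\epsilon^{-1/(\gamma_1+\gamma_2+2)}$ (not your $\epsilon^{-2/(\gamma_1+\gamma_2+2)}$) and, after the square root, yields exactly the exponent $\gamma_2/\big(2(\gamma_1+\gamma_2+2)\big)$ of the statement; your $\epsilon^2$ bound would instead produce the rate $\epsilon^{\gamma_2/(\gamma_1+\gamma_2+2)}$, which is inconsistent with the rate you yourself announce at the end of your optimization. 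Replace the Minkowski step by the weighted Cauchy--Schwarz above and the argument closes.
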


Using Lemmas \ref{Step1}--\ref{Step3},
we will prove the results of the Theorem.
To this aim, let us fix $T \in (0, T_{\zeta, \alpha, \beta}]$.
We also set $\cl=\snorm{u_{\zeta, \alpha, \beta}}_{\beta/2, T}$,
$
   \tau_k
 =
   \sup
   \{
      \tau \in [0, T_{\zeta_k, \alpha_k, \beta_k}]:
      \snorm{u_{\zeta_k, \alpha_k, \beta_k}}_{\min\{\beta/2, \beta_k/2\}, \tau}
\le
    \left(\max\big\{1, \theta^{\beta_*-\beta^*} \big\}+1\right) \cl
   \}
$
and $T_k=\min\{ \tau_k, \, T\}$ for $k \in \mathbb{N}$.
\\
Using the triangle inequality and Lemmas \ref{Step1}--\ref{Step2}, we obtain
\begin{eqnarray*}
\lefteqn
{
\dnorm
{
   u_{\zeta_k, \alpha_k, \beta_k}(t)
   -
   u_{\zeta, \alpha, \beta}(t)
}_{\min\{\beta_k/2, \, \beta/2\}}
}
\\
&\le &
\dnorm
{
   u_{\zeta_k, \alpha_k, \beta_k}(t)
   -
   u_{\zeta, \alpha_k, \beta_k}(t)
}_{\min\{\beta_k/2, \, \beta/2\}}
+
\dnorm
{
   u_{\zeta, \alpha_k, \beta_k}(t)
   -
   u_{\zeta, \alpha, \beta}(t)
}_{\min\{\beta_k/2, \, \beta/2\}}
\\
& \le &
   P_1\dnorm{\zeta - \zeta_k}_{\beta^*/2}
  +
   P\left(\epsilon + P_\epsilon(|\alpha - \alpha_k| + | \beta - \beta_k|) \right)^{1/2},
\end{eqnarray*}
for any $t \in [0, T_k]$.
In addition, we note that
\begin{equation}
\label{p-q-norm}
   \dnorm{w}_p \le \theta^{p-q} \dnorm{w}_q
\
\text{for any}
\
 0 \le p \le q.
\end{equation}
Consequently,
\begin{eqnarray*}
\lefteqn
{
   \dnorm{u_{\zeta_k, \alpha_k, \beta_k}(t)}_{\min\{\beta_k/2, \, \beta/2\}}
}
\\
& \le &
   \dnorm{u_{\zeta, \alpha, \beta}(t)}_{\min\{\beta_k/2, \, \beta/2\}}
   +
   \left(
      P_1\dnorm{\zeta - \zeta_k}_{\beta^*/2}
      +
      P\left(\epsilon + P_\epsilon(|\alpha - \alpha_k| + | \beta - \beta_k|) \right)^{1/2}
   \right)
 \\
 &< &
   \max \left\{1, \theta^{\beta_*/2-\beta^*/2}\right\}\cl + \cl
=
   \left( \max \left\{1, \theta^{\beta_*/2-\beta^*/2}\right\}+1\right)\cl,
\end{eqnarray*}
for any $t \in [0, T_k]$ and $k$ large enough.
So far, from the definition of $T_k$, we deduce
$T_k =\min\{\tau_k, T\} < \tau_k$
or
$T_j=T$
for $k\ge k_T$ with some $k_T$ large enough.
We can choose $\m_T=\left(\max \left\{1, \theta^{\beta_*/2-\beta^*/2}\right\}+1\right)\cl$,
then
$
  u_{\zeta, \alpha, \beta}, \, u_{\zeta_k, \alpha_k, \beta_k}
  \in
  B_{\min\{\beta/2, \beta_k/2\}, T}(\m_T)
$.

From the latter result, we can verify directly the main results (\ref{lim1}), (\ref{lim2}), (\ref{lim3})
of the theorem.
~
In fact, if $p\in [\beta_*/2, \beta/2)$ then with $k$ large enough, we have $\beta_k/2 \ge p$.
~
Hence, we can combine Lemma \ref{Step1}, Lemma \ref{Step2} with (\ref{p-q-norm}) to obtain (\ref{lim1}).
~
We also use Lemma \ref{Step1} and Lemma \ref{Step2} to deduce (\ref{lim2}).
~
Finally, combining Lemma \ref{Step1} with Lemma \ref{Step3}, we obtain (\ref{lim3}).
This completes the core of the proof.
~
\end{proof}

\section{The FFVP}

This section is  devoted to the study of  existence, and uniqueness of the solution of the FFVP for $t>0$.
In the case the solution is unique, we investigate the stability of solution of the problem
with respect to perturbed fractional orders and the final data.
For $t=0$, we will analyze the ill-posedness of the problem,
after that, we propose a method to regularize this problem.

Firstly, for brevity, from now on, we use the notation $Q_{\alpha,\beta,A}(u))(t)$ to denote the quantity
$Q_{\alpha,\beta,A}(f(.,u))(0,t)$ defined in Lemma \ref{Q-lemma}. For convenience, we
write again the formula of the quantity
\[
Q_{\alpha,\beta,A}(u)(t)
=
\int_{0}^{t}\mathcal{E}_{\alpha,\beta}(A,t,\tau)f(\tau,u(\tau)) \ud\tau.
\]
Using the Fourier series and the Laplace transform,
we can rewrite problem (\ref{main}) and (\ref{final value})
into the following integral equation
\begin{equation}
u(t)
  =
P_{\alpha, \beta}(A, t) G_{\varphi, \alpha, \beta, A}(u)
+
Q_{\alpha, \beta, A} (u)(t),
\label{solution-bw-nonlinear}
\end{equation}
where
\[
   G_{\varphi, \alpha, \beta, A}(u)
 =
   \varphi- Q_{\alpha, \beta, A}(u)(T)
,
\
P_{\alpha, \beta}(A, t)
=
E_{\alpha} \big(-A^\beta t^\alpha \big)
E^{-1}_{\alpha} \big(- A^\beta T^\alpha \big)
.
\]
~
~
~
Before stating  the main results of this part,
we provide some properties of the functions $Q_{\alpha, \beta, A}$ and $G_{\varphi, \alpha, \beta, A}$
in the following lemma.

\begin{lemma}

\label{QG-es}
 Let $\beta> 0$, $s \in [0, \beta/2]$, $\rho\ge \alpha$, and let $\nu<1/2-\rho$ and $\nu \le \alpha/2$.
 Let $f$ satisfy the {\bf Assumption F1} such that
\[
    \kappa=\sup_{\mathcal{M}>0}L(s,\mathcal{M})<\infty.
\]
 Let $w_1, w_2 \in C_{s, \rho}(T)$,
 and $\varphi, \wt{\varphi} \in D(A^s)$.
 We assume that
\[
     \Theta_\alpha(t)
=
     \int_0^t (t-\tau)^{\alpha-1} \dnorm{f(\tau, 0)}^2 \ud \tau
< + \infty
\]
and put
\[
E_0
=
\frac{1}{\Gamma(\alpha)} B(\alpha, 1-2\rho-2\nu)
=
\frac{\Gamma(1-2\rho-2\nu)}{\Gamma(1+\alpha-2\rho-2\nu)}
.
\]
~
Then we have
~
\\
~
(1).
$
\displaystyle
   \| Q_{\alpha, \beta, A}(w_1)(t) - Q_{\alpha, \beta, A}(w_2)(t)\|_s
\le
    \kappa \theta^{s-\theta/2} E_0^{1/2}
    \tnorm{w_1-w_2)}_{s, \rho} t^{\alpha/2-\rho-\nu}
$.
~
\\
~
(2).
$
\displaystyle
    \dnorm{G_{\varphi, \alpha, \beta, A}(w_1) - G_{\wt{\varphi}, \alpha, \beta, A}(w_2)}_s
\le
    \| \varphi - \wt{\varphi} \|_s
    +
     \kappa \theta^{s-\theta/2} E_0^{1/2}
     T^{\alpha/2-\rho-\nu}
    \tnorm{w_1-w_2)}_{s, \rho}
$.
~
\\
~
(3).
$
\displaystyle
   \dnorm{Q_{\alpha, \beta, A}(w_1)(t)}_s^2
 \le
2\theta^{2s-\beta}
\left(
\frac{\Theta_\alpha(t)}{\Gamma(\alpha)}
+
\kappa^2 E_0
\tnorm{w_1}_{s,\rho}^2 t^{\alpha-2\rho-2\nu}
\right)
$.
~
\\
~
(4).
$
\displaystyle
\dnorm{G_{\varphi, \alpha, \beta, A}(w_1)}_s
\le
\dnorm{\varphi}_s
+
\sqrt{2}\theta^{s-\beta/2}
\left[
\left(\frac{\Theta_\alpha(T)}{\Gamma(\alpha)}\right)^{1/2}
+
\kappa
E_0^{1/2}
\tnorm{w_1}_{s,\rho}
T^{\alpha-2\rho-2\nu}
\right]
$.
~
~
\\
~
(5).
If we suppose further that $f(t, w_1(t)) \in C\left(0, T], D(A^r) \right)$
and $\varphi \in D(A^{s+r})$,
then
\[
   G_{\varphi, \alpha, \beta, A}(w_1) \in D(A^{s+r}).
\]
\end{lemma}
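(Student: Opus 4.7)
The plan is to derive all five estimates by applying Lemma~\ref{Q-lemma}(ii) (with $t_1=0$, $t_2=t$ or $T$) and combining with Assumption F1 plus elementary inequalities. The recurring observation is that since $s\in[0,\beta/2]$, the supremum $\sup_{\lambda\ge\theta}\lambda^{2s-\beta}H_0(\lambda,0,t)$ is bounded by $\theta^{2s-\beta}$, because $H_0\le 1$ and $2s-\beta\le 0$; this is how the $\theta^{2s-\beta}$ (respectively $\theta^{s-\beta/2}$) factor enters each bound.

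For part (1), I would apply Lemma~\ref{Q-lemma}(ii) with $r=0$ to the integrand $f(\tau,w_1(\tau))-f(\tau,w_2(\tau))$, producing
\[
\|Q_{\alpha,\beta,A}(w_1)(t)-Q_{\alpha,\beta,A}(w_2)(t)\|_s^2
\le \frac{\theta^{2s-\beta}}{\Gamma(\alpha)}\int_0^t(t-\tau)^{\alpha-1}\|f(\tau,w_1(\tau))-f(\tau,w_2(\tau))\|^2 \ud \tau.
\]
Assumption F1 gives $\|f(\tau,w_1)-f(\tau,w_2)\|\le \kappa\tau^{-\nu}\|w_1-w_2\|_s$, and by definition $\|w_j(\tau)\|_s\le\tau^{-\rho}\tnorm{w_j}_{s,\rho}$, so the integrand is dominated by $\kappa^2\tau^{-2\nu-2\rho}\tnorm{w_1-w_2}_{s,\rho}^2$. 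The Beta integral $\int_0^t(t-\tau)^{\alpha-1}\tau^{-2\nu-2\rho}\ud\tau=B(\alpha,1-2\nu-2\rho)t^{\alpha-2\nu-2\rho}$ (convergent because $\nu<1/2-\rho$) produces the factor $E_0$ after dividing by $\Gamma(\alpha)$, and taking square roots gives the stated bound. Part (2) then follows by writing $G_\varphi(w_1)-G_{\wt\varphi}(w_2)=(\varphi-\wt\varphi)-[Q(w_1)(T)-Q(w_2)(T)]$ and applying the triangle inequality together with Part~(1) at $t=T$.

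For part (3), I would decompose $Q(w_1)(t)=[Q(w_1)(t)-Q(0)(t)]+Q(0)(t)$ and use $(a+b)^2\le 2a^2+2b^2$. The first summand is bounded by Part~(1) with $w_2\equiv 0$, and the second is controlled by Lemma~\ref{Q-lemma}(ii) applied directly to the source $f(\tau,0)$, giving $\|Q(0)(t)\|_s^2\le \theta^{2s-\beta}\Theta_\alpha(t)/\Gamma(\alpha)$. Summing these two bounds reproduces the claim. Part (4) follows from $\|G_\varphi(w_1)\|_s\le\|\varphi\|_s+\|Q(w_1)(T)\|_s$, Part~(3) evaluated at $t=T$, and the subadditivity $\sqrt{a+b}\le\sqrt a+\sqrt b$.

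For part (5), since $\varphi\in D(A^{s+r})$ by hypothesis, it is enough to show $Q_{\alpha,\beta,A}(w_1)(T)\in D(A^{s+r})$. I would invoke Lemma~\ref{Q-lemma}(ii) with the same $s\in[0,\beta/2]$ and with the lemma's shift index taken to be our $r$, so that its conclusion bounds $\|Q(w_1)(T)\|_{s+r}$ by a constant multiple of $\int_0^T(T-\tau)^{\alpha-1}\|f(\tau,w_1(\tau))\|_r^2 \ud \tau$. This is the one step I expect to require real care: the other parts use only the $H$-norm of $f$, whereas here one must certify that the $D(A^r)$-norm of $f(\tau,w_1(\tau))$ is integrable against $(T-\tau)^{\alpha-1}$. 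The assumed continuity $f(\cdot,w_1(\cdot))\in C((0,T],D(A^r))$ bounds this norm on each compact subinterval $[\delta,T]$; any residual singularity at $\tau=0$ must be absorbed by the $C_{s,\rho}(T)$-regularity of $w_1$ together with Assumption F1 (possibly applied for the index $r$), after which the usual dominated-convergence and approximation argument on $\delta\downarrow 0$ gives the claim.
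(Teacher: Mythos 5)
Your proposal follows essentially the same route as the paper: every part is obtained from Lemma~\ref{Q-lemma}(ii) with $\sup_{\lambda\ge\theta}\lambda^{2s-\beta}H_0(\lambda,0,t)\le\theta^{2s-\beta}$, Assumption F1, the weighted bound $\dnorm{w(\tau)}_s\le\tau^{-\rho}\tnorm{w}_{s,\rho}$ and the Beta integral, the only cosmetic difference being that in part (3) the paper splits the integrand via $\dnorm{f(\tau,w_1)}\le\kappa\tau^{-\nu}\dnorm{w_1(\tau)}_s+\dnorm{f(\tau,0)}$ followed by $(a+b)^2\le 2a^2+2b^2$, rather than splitting the operator as $Q(w_1)=(Q(w_1)-Q(0))+Q(0)$, which gives the identical bound. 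For part (5) the paper simply invokes Lemma~\ref{Q-lemma} and the hypothesis $\varphi\in D(A^{s+r})$ without discussing the integrability of $\dnorm{f(\tau,w_1(\tau))}_r^2$ near $\tau=0$, so your added caution there is reasonable but does not constitute a departure from the paper's argument.
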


 \begin{proof}
 See appendix \ref{app-QG-es}.
 \end{proof}

\subsection{Existence, uniqueness results for $t>0$}

In this part, we use the Krasnoselskii fixed point theorem and the contraction principle
to give the existence, and  uniqueness  of solution of the FFVP.
In fact, we have the following results.

 \begin{theorem}
\label{existence-FVP}

Let $\beta> 0$, $s \in [0, \beta/2]$, $\rho\ge \alpha$, and let $\nu<1/2-\rho$ and $\nu \le \alpha/2$.
Let $f$ satisfy the \textbf{Assumption F1},
and $\varphi \in D\left(A^s\right)$.
We put
\[
    \kappa=\sup_{\mathcal{M}>0}L(s,\mathcal{M}).
\]

(i).
If we suppose further that there exists a  non-negative integrable function $h: [0, T] \to \bR$
and a positive number $M$ such that
\[
    \dnorm{f(t, w(t))} \le M t^{-\varrho} + h(t)
\]
for any $w\in D(A^s)$, for some $\varrho \le \alpha/2+\rho$ and $\varrho<1/2$
and
\[
  P_0=\sup_{0 \le t \le T}t^{2\rho}\int_0^t (t-\tau)^{\alpha-1} h(\tau) \ud \tau <+\infty.
\]
Then there exist $K_0=K_0(\alpha, \beta)>0$
such that for $\kappa<K_0$ the FFVP has at least one solution
in $C_{s, \rho}(T)$.

(ii).
If
\begin{equation}
\label{theta}
   \Theta_T(\alpha)
=
   \sup_{t \in (0, T]}
   t^{2\rho}
   \Theta_\alpha(t)
<+\infty
.
\end{equation}
Then there exists $E=E(\alpha, \beta)>0$
such that for all
$\kappa<K_0/(1+1/E)$ with $K_0$ defined in part (i)
the FFVP has a unique solution in $C_{s, \rho}(T)$,
say $u$.
In addition, if $\kappa<K_0/\left(\sqrt{2}(1+1/E)\right)$ then we have the upper bound estimate
\begin{equation}
\label{upper-es}
   \tnorm{u}_{s, \rho}
\le
   (1-L)^{-1}
   \left(
      E T^\rho \dnorm{\varphi}_s
      +
      \sqrt{2}(1+E) \theta^{s-\beta/2}
     \left(\frac{\Theta_T(\alpha)}{\Gamma(\alpha)} \right)^{1/2}
   \right)
,
\end{equation}
where
$ L = 1-\sqrt{2} \kappa (1+1/E)/K_0$.
\end{theorem}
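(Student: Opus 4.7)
The plan is to rewrite the FFVP as the fixed-point equation $u=\mathcal{F}(u)$ on the weighted Banach space $C_{s,\rho}(T)$, where
\[\mathcal{F}(u)(t) = P_{\alpha,\beta}(A,t)\,G_{\varphi,\alpha,\beta,A}(u) + Q_{\alpha,\beta,A}(u)(t).\]
The crucial preliminary is an operator estimate for $P_{\alpha,\beta}(A,t)=E_\alpha(-A^\beta t^\alpha)E_\alpha^{-1}(-A^\beta T^\alpha)$: by Lemma \ref{trong1}(b) the spectral symbol $E_\alpha(-\lambda^\beta t^\alpha)/E_\alpha(-\lambda^\beta T^\alpha)$ is monotone non-decreasing in $\lambda^\beta\ge\theta^\beta$ and bounded above by a constant multiple of $(T/t)^\alpha$. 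Combined with $\rho\ge\alpha$ and $t\le T$, this furnishes a constant $E=E(\alpha,\beta)$ such that
\[t^\rho\,\|P_{\alpha,\beta}(A,t)\,w\|_s \le E\,T^\rho\,\|w\|_s\qquad(w\in D(A^s)).\]

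For part (ii) I would apply the Banach contraction principle. Writing $G_{\varphi,\alpha,\beta,A}(u_1)-G_{\varphi,\alpha,\beta,A}(u_2)=-[Q_{\alpha,\beta,A}(u_1)(T)-Q_{\alpha,\beta,A}(u_2)(T)]$ and applying Lemma \ref{QG-es}(1) to both pieces of $\mathcal{F}(u_1)-\mathcal{F}(u_2)$, and then inserting the $P$-bound above, gives a Lipschitz constant of $\mathcal{F}$ proportional to $\kappa(1+1/E)/K_0$ with $K_0=K_0(\alpha,\beta)$ the threshold appearing in part (i). The hypothesis $\kappa<K_0/(1+1/E)$ then ensures $\mathcal{F}$ is a strict contraction on $C_{s,\rho}(T)$, and uniqueness of its fixed point follows. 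For the a priori estimate (\ref{upper-es}), I would substitute $u=\mathcal{F}(u)$ into $\tnorm{u}_{s,\rho}$, apply Lemma \ref{QG-es}(3)--(4) together with the $P$-bound to control each term, collect the $\tnorm{u}_{s,\rho}$-dependent contributions on the left-hand side with coefficient $1-L$, and read off the displayed bound.

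For part (i) only existence is claimed, and the weaker threshold $\kappa<K_0$ motivates the splitting $\mathcal{F}=\mathcal{F}_{\mathrm{con}}+\mathcal{F}_{\mathrm{cpt}}$ with $\mathcal{F}_{\mathrm{con}}(u)=Q_{\alpha,\beta,A}(u)$ and $\mathcal{F}_{\mathrm{cpt}}(u)(t)=P_{\alpha,\beta}(A,t)G_{\varphi,\alpha,\beta,A}(u)$, followed by Krasnoselskii's fixed-point theorem. Lemma \ref{QG-es}(1) alone shows that $\mathcal{F}_{\mathrm{con}}$ is a contraction on $C_{s,\rho}(T)$ whenever $\kappa<K_0$. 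The uniform growth bound $\|f(t,w)\|\le Mt^{-\varrho}+h(t)$ together with $P_0<\infty$ yields a uniform control on $\|G_{\varphi,\alpha,\beta,A}(u)\|_s$, so one can choose the radius $R$ of a closed ball $B_R\subset C_{s,\rho}(T)$ large enough that $\mathcal{F}_{\mathrm{cpt}}(u)+\mathcal{F}_{\mathrm{con}}(v)\in B_R$ for all $u,v\in B_R$.

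The main obstacle is the relative compactness of $\mathcal{F}_{\mathrm{cpt}}(B_R)$ in $C_{s,\rho}(T)$: since $A$ is not assumed to have compact resolvent, pointwise compactness in $D(A^s)$ is not automatic. The strategy, mirroring Theorem \ref{existence-thm}, is to establish equicontinuity of $\mathcal{F}_{\mathrm{cpt}}(B_R)$ in $t\in(0,T]$ via parts (ii)--(iii) of Lemma \ref{Q-lemma} (the suprema $\sup_{\lambda\ge\theta}\lambda^{2s-\beta}H_0(\lambda,t_1,t_2)$ and $\sup_{\lambda\ge\theta}\lambda^{2s-\beta}H(\lambda,t_1,t_2)$ tend to $0$ as $|t_1-t_2|\to 0$), and to exploit the high-frequency damping of the Mittag--Leffler symbol $E_\alpha(-\lambda^\beta t^\alpha)$ for $t>0$ (Lemma \ref{trong1}(b)) to control the spectral tail of $\mathcal{F}_{\mathrm{cpt}}(u)(t)$ uniformly over $u\in B_R$. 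Relative compactness in $C_{s,\rho}(T)$ then follows by an Arzel\`a--Ascoli argument, and Krasnoselskii's theorem produces the desired fixed point.
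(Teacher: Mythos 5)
Part (ii) of your proposal is essentially the paper's argument: the same contraction map $\cq_{\varphi,\alpha,\beta,A}=B+Q_{\alpha,\beta,A}$ with Lipschitz constant $\kappa(1+1/E)/K_0$, and the same bookkeeping for the a priori bound. The problem is part (i), where you have swapped the roles of the two operators in the Krasnoselskii splitting relative to the paper, and the swap does not work as you describe. The paper takes $Q_{\alpha,\beta,A}$ (the Volterra integral term) as the \emph{completely continuous} part --- its equicontinuity in $t$ comes from the smoothing of the singular kernel via Lemma \ref{Q-lemma} (ii)--(iii), exactly as in Theorem \ref{existence-thm} --- and takes $B(u)(t)=P_{\alpha,\beta}(A,t)G_{\varphi,\alpha,\beta,A}(u)$ as the \emph{contraction}; it is this second step that produces the threshold $K_0=\bigl(\theta^{s-\beta/2}E_0^{1/2}ET^{\alpha/2-\nu}\bigr)^{-1}$ and the hypothesis $\kappa<K_0$. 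In your splitting the contraction constant of $Q_{\alpha,\beta,A}$ is $\kappa\theta^{s-\beta/2}E_0^{1/2}T^{\alpha/2-\nu}=\kappa/(EK_0)$, so the stated constant $K_0$ would not arise naturally, though that by itself is only cosmetic.

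The genuine gap is your compactness claim for $u\mapsto P_{\alpha,\beta}(A,t)G_{\varphi,\alpha,\beta,A}(u)$. You propose to ``exploit the high-frequency damping of the Mittag--Leffler symbol $E_\alpha(-\lambda^\beta t^\alpha)$,'' but $P_{\alpha,\beta}(\lambda,t)$ is the \emph{ratio} $E_\alpha(-\lambda^\beta t^\alpha)/E_\alpha(-\lambda^\beta T^\alpha)$, and by Lemma \ref{trong1}(b) this ratio does not decay as $\lambda\to\infty$: it is pinched between positive constants of order $(T/t)^\alpha$, and at $t=T$ the operator $P_{\alpha,\beta}(A,T)$ is the identity. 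There is therefore no spectral tail decay to exploit; since $P_{\alpha,\beta}(A,t)$ is bounded below on the spectrum, precompactness of $\{P_{\alpha,\beta}(A,t)G_{\varphi,\alpha,\beta,A}(u):u\in B_R\}$ in $D(A^s)$ is equivalent to precompactness of $\{Q_{\alpha,\beta,A}(u)(T):u\in B_R\}$, which is precisely the difficulty you were trying to avoid. Moreover, equicontinuity of $t\mapsto B(u)(t)$ is of no help here because the $u$-dependence sits entirely in the fixed element $G_{\varphi,\alpha,\beta,A}(u)$ to which the multiplier is applied. To repair part (i) you should revert to the paper's assignment: prove complete continuity for $Q_{\alpha,\beta,A}$ (boundedness from the growth hypothesis on $f$ together with $P_0<\infty$, equicontinuity from Lemma \ref{Q-lemma}), prove that $B$ is a contraction for $\kappa<K_0$ using the bound $P_{\alpha,\beta}(\lambda,t)\le E(T/t)^\alpha$ and Lemma \ref{QG-es}(2), and verify the invariance $B(u)+Q_{\alpha,\beta,A}(v)\in\Omega_1$.
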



\begin{proof}
(i).
Firstly, according to Lemma \ref{trong1},
there exists a constant $E=E(\alpha, \beta)$
such that
\begin{equation}
\label{P-es}
   P_{\alpha, \beta}(\lambda, t)
   \le
   E (T/t)^\alpha
\end{equation}
for any $\lambda \ge \theta$.
Let us define
\[
\Lambda_0
=
   \frac{2\theta^{2s-\beta}}{\Gamma(\alpha)}
   \left(
      M^2 B(\alpha, 1-2\varrho)  T^{\alpha+2\rho-2\varrho}
     +
      P_0
   \right),
\]
and
\[
   \Omega_1
=
   \left\{
      w \in C_{s, \rho}(T): \tnorm{w}_{s, \rho}
\le
      E T^\rho \dnorm{\varphi}_s
      +
      (1+E) \Lambda_0^{1/2}
   \right\}
.
\]
The proof is divided in three steps.
\begin{itemize}

\item[•]
\textbf{Step 1.}
The operator $Q_{\alpha, \beta, A}$ is completely continuous.

\item[•]
\textbf{Step 2.}
The operator $B(u)(t)=P_{\alpha, \beta}(A, t) G_{\varphi, \alpha, \beta, A}(u)$
is a contraction  in $C_{s, \rho}(T)$.

\item[•]
\textbf{Step 3.}
$
B(u) +Q_{\alpha, \beta, A}(v) \in \Omega_1
$
for all $u, v \in \Omega_1$.
\end{itemize}

\ni{\bf Proof of Step 1.}
We will us the same method of the proof of Theorem \ref{existence-thm}.
Indeed, using Lemma \ref{QG-es}, part (1), we have $Q_{\alpha, \beta, A}$ continuous in $C_{s, \rho}(T)$.
Applying Lemma \ref{Q-lemma} with notice that $\sup_{\lambda \ge \theta}\lambda^{2s-\beta} H_0(\lambda, 0, t) \le \theta^{2s-\beta}$,
we have
\begin{eqnarray*}
   \dnorm{Q_{\alpha, \beta, A}(u)(t)}_s^2
&\le &
   \frac{\theta^{2s-\beta}}{\Gamma(\alpha)}
   \int_0^t (t-\tau)^{\alpha-1} \dnorm{f(\tau, u)}^2 \ud \tau
\nn
\\
&\le &
   \frac{2\theta^{2s-\beta}}{\Gamma(\alpha)}
   \left(
      M^2 \int_0^t (t-\tau)^{\alpha-1} \tau^{-2\varrho}\ud \tau
      +
      \int_0^t (t-\tau)^{\alpha-1} h(\tau) \ud \tau
   \right)
\nn
\\
&=&
   \frac{2\theta^{2s-\beta}}{\Gamma(\alpha)}
   \left(
      M^2 B(\alpha, 1-2\varrho)  t^{\alpha-2\varrho}
     +
      \int_0^t (t-\tau)^{\alpha-1} h(\tau) \ud \tau
   \right)
\end{eqnarray*}
for any $u \in D(A^s)$.
This gives
\begin{eqnarray}
\label{bound-Q}
   \tnorm{Q_{\alpha, \beta, A}(u)}_{s, \rho}^2
& \le &
   \frac{2\theta^{2s-\beta}}{\Gamma(\alpha)}
   \left(
      M^2 B(\alpha, 1-2\varrho)  T^{\alpha+2\rho-2\varrho}
     +
      P_0
   \right)
\nn
\\
&:=&
   \Lambda_0
\end{eqnarray}
So $Q_{\alpha, \beta, A}(C_{s, \rho}(T))$ is bounded.
For $t_1<t_2$, we have
\[
   \dnorm{Q_{\alpha, \beta, A}(w)(t_1)-Q_{\alpha, \beta, A}(w)(t_2)}_s
\le
   \dnorm{
   R_{\alpha,\beta,A}(f(.,w))(t_1,t_2)}_s
+
   \dnorm{ Q_{\alpha,\beta,A}(f(.,w))(t_1,t_2)}_s
,
\]
where $R_{\alpha,\beta,A}(f(., w)), Q_{\alpha,\beta,A}(f(., w))(t_1, t_2)$ defined in Lemma \ref{Q-lemma}.
Hence, by similar method in Theorem \ref{existence-thm},
we can directly verify that $Q_{\alpha, \beta, A}(\Omega_1)$ is equicontinuous.

\ni{\bf Proof of Step 2.}
Using (\ref{P-es}), Lemma \ref{QG-es} (part (2)) and by direct computation, we have
\begin{eqnarray*}
   \dnorm{B(w_1)(t)-B(w_2)(t)}_s
&\le &
   E (T/t)^\alpha
   \dnorm
   {
      G_{\varphi, \alpha, \beta, A}(w_1)
      -
      G_{\varphi, \alpha, \beta, A}(w_2)
   }_s
\\
&\le &
   \kappa \theta^{s-\beta/2} E_0^{1/2} E T^{3\alpha/2-\rho-\nu}
   \tnorm{w_1-w_2)}_{s, \rho} t^{-\alpha}
   ,
\end{eqnarray*}
where $E_0$ defined as in Lemma \ref{QG-es}.
This gives
\begin{eqnarray}
\label{B-es}
   \tnorm{B(w_1)-B(w_2}_{s, \rho}
&\le &
   \kappa \theta^{s-\beta/2} E_0^{1/2} E T^{\alpha/2-\nu}
   \tnorm{w_1-w_2)}_{s, \rho}
   .
\end{eqnarray}
If we put
\begin{equation}
\label{df-K0}
K_0=\left( \theta^{s-\beta/2} E_0^{1/2} E T^{\alpha/2-\nu}\right)^{-1}
\end{equation}
then $B$ is a contraction  in $C_{s, \rho}(T)$ for any $0 \le \kappa<K_0$.

\ni{\bf Proof of Step 3.}
By (\ref{bound-Q}), we have
\begin{eqnarray*}
   \tnorm{B(u)}_{s, \rho}
& \le &
   \sup_{0<t \le T}
   t^{\rho}
   \dnorm{P_{\alpha, \beta}(A, t) \varphi}_s
+
   \sup_{0< t \le T}
   t^{\rho}
   \dnorm{P_{\alpha, \beta}(A, t) Q_{\alpha, \beta, A}(u)(T)}_s
\\
&\le &
   E T^\rho \dnorm{\varphi}_s +E T^\rho \dnorm{Q_{\alpha, \beta, A}(u)(T)}_s
\\
&\le &
   E T^\rho \dnorm{\varphi}_s +E \dnorm{Q_{\alpha, \beta, A}(u)}_{s, \rho}
\\
&\le &
   E T^\rho \dnorm{\varphi}_s +E \Lambda_0^{1/2}
   .
\end{eqnarray*}
By (\ref{bound-Q}), we can verify that
\[
    \tnorm{B(u)+Q_{\alpha, \beta, A}(v)}_{s, \rho}
 \le
    \tnorm{B(u)}_{s, \rho}+\tnorm{Q_{\alpha, \beta, A}(v)}_{s, \rho}
 \le
    E T^\rho \dnorm{\varphi}_s
    +
    (1+E) \Lambda_0^{1/2}
    .
\]
The last  inequality  shows that
$
B(u)+Q_{\alpha, \beta, A}(v) \in \Omega_1
$
for all $u, v \in \Omega_1$.
This completes the proof of Step 3.
Now, we can use the Krasnoselskii fixed point theorem (see \cite[p.31]{Smart})
to obtain the desired result in  part (i).
\\
\\
(ii).
We define
\[
   \cq_{\varphi, \alpha, \beta, A}(u)(t)
=
   P_{\alpha, \beta}(A, t) G_{\varphi, \alpha, \beta, A}(u)
  +
   Q_{\alpha, \beta, A} (u)(t)
=
   B(u)(t)
  +
   Q_{\alpha, \beta, A} (u)(t)
   ,
\]
where $B(u)(t)=P_{\alpha, \beta}(A, t) G_{\varphi, \alpha, \beta, A}(u)$.
We can use Lemma \ref{QG-es} to verify that $\cq(u) \in C_{s, \rho}(T)$
for any $u\in C_{s, \rho}(T)$.
On the other hand, applying Lemma \ref{QG-es} and (\ref{B-es}), we have
\begin{eqnarray}
\label{Q-w12}
   \tnorm{\cq_{\varphi, \alpha, \beta, A}(w_1)-\cq_{\varphi, \alpha, \beta, A}(w_2)}_{s, \rho}
&\le &
   \tnorm{B(w_1)-B(w_2)}_{s, \rho}
  +
   \tnorm{Q_{\alpha, \beta, A} (w_1)-Q_{\alpha, \beta, A} (w_2)}_{s, \rho}
\nn
\\
& \le &
   \kappa \theta^{s-\beta/2} (E+1) E_0^{1/2}T^{\alpha/2-\nu}
   \tnorm{w_1-w_2)}_{s, \rho}
\nn
\\
&=&
   \kappa\left( 1+ 1/E \right) /K_0
   \tnorm{w_1-w_2)}_{s, \rho}
,
\end{eqnarray}
where $K_0$ is  defined in (\ref{df-K0}).
If
\[
   \kappa
<
   K_0/(1+1/E)
\]
then $\cq_{\varphi, \alpha, \beta, A}$ is a contraction  in $C_{s, \rho}(T)$.
Hence the problem has a unique solution.
Finally, we prove the upper bounded estimate.
Using inequality $a^2+b^2 \le (a+b)^2, (a, b\ge 0)$, by Lemma \ref{QG-es} (part (3)),
one has
\begin{eqnarray}
\label{QF-es}
   \tnorm{\cq_{\varphi, \alpha, \beta, A}(u)}_{s, \rho}
&\le &
   \tnorm{B(u)}_{s, \rho}+\tnorm{Q_{\alpha, \beta, A}(u)}_{s, \rho}
\nn
\\
& \le &
   E T^\rho \dnorm{G_{\varphi, \alpha, \beta, A}(u)}_s
+
   \tnorm{Q_{\alpha, \beta, A}(u)}_{s, \rho}
\nn
\\
& \le &
   E T^\rho \dnorm{\varphi}_s
+
   (1+E) \tnorm{Q_{\alpha, \beta, A}(u)}_{s, \rho}
\nn
\\
& \le &
   E T^\rho \dnorm{\varphi}_s
+
\sqrt{2}(1+E) \theta^{s-\beta/2}
   \left[
       \left(\frac{\Theta_T(\alpha)}{\Gamma(\alpha)} \right)^{1/2}
       +
       \kappa E_0^{1/2} T^{\alpha/2-\nu}
       \tnorm{u}_{s, \rho}
   \right]
\nn
\\
& \le &
   E T^\rho \dnorm{\varphi}_s
+
   \sqrt{2}(1+E) \theta^{s-\beta/2}
   \left(\frac{\Theta_T(\alpha)}{\Gamma(\alpha)} \right)^{1/2}
   +
   L \tnorm{u}_{s, \rho}
   ,
\end{eqnarray}
where $K_0$, $E$ defined in (\ref{df-K0}), (\ref{P-es}) respectively,
$\Theta_T(\alpha)$ defined in (\ref{theta}),
and $L=\sqrt{2} \kappa (1+1/E)/ K_0$.
~
~
If $L<1$
or $\kappa<K_0/ \left(\sqrt{2}(1+1/E)\right)$
then the FFVP has a unique solution $u$ in $C_{s, \rho}(T)$,
and $\cq(u)=u$.
Therefore, (\ref{QF-es}) gives
\[
   \tnorm{u}_{s, \rho}
\le
   (1-L)^{-1}
   \left(
      E T^\rho \dnorm{\varphi}_s
      +
      \sqrt{2}(1+E) \theta^{s-\beta/2}
     \left(\frac{\Theta_T(\alpha)}{\Gamma(\alpha)} \right)^{1/2}
   \right)
.
\]
This completes the proof of part (ii) and the proof of Theorem.
\end{proof}

\subsection{Stability results}
In this part, we investigate the stability of solution of the FFVP with respect to
the fractional orders and the final data.
To emphasize the dependence of solution of the FFVP on the given data $\alpha, \beta$ and $\varphi$,
we denote it by $u_{\varphi, \alpha, \beta}$.
Using the notation, we have the following results.

\begin{theorem}
\label{Stability-FVP}

Let $\alpha_*, \alpha^*$, $\beta_*, \beta^*$ and $\Delta$ as in (\ref{delta}).
Let $(\alpha, \beta) \in \Delta$, $\rho \ge \alpha^*$,
and $\nu <1/2-\rho$.
Let $\varphi \in D\left(A^{\beta^*/2}\right)$,
the source function $f$ satisfies the \textbf{Assumption F1}
for any $s \in [\beta_*/2, \beta^*/2]$.
We suppose that
\[
   \sup_{\alpha_* \le \alpha \le \alpha^*} \Theta_T(\alpha)<\infty
,
\]
where $\Theta_T(\alpha)$ defined in (\ref{theta}),
and
\begin{equation}
\label{Km}
    \kappa=\sup_{\mathcal{M}>0}L(s,\mathcal{M})
<
   K_m
:=
   \min_{\alpha_*\le \alpha \le \alpha^*, \beta_* \le \beta \le \beta^*}
   K_0(\alpha, \beta)/\left(\sqrt{2}(1+1/E(\alpha, \beta))\right)
   \ \  \forall s \in [\beta_*/2, \beta^*/2],
\end{equation}
where $K_0$, $E$ defined in Theorem \ref{existence-FVP}.
Then we have

\paragraph*{(i).}

For
$
   \wt{\alpha} \in [\alpha_*, \alpha^*]
$,
$
   \wt{\beta} \in [\beta_*, \beta^*]
$,
and
$
   \varphi, \, \wt{\varphi} \in D\left(A^{\beta^*/2}\right)
$,
then, we have
\[
\tnorm
{
   u_{\wt{\varphi}, \wt{\alpha}, \wt{\beta}}
   -
   u_{\varphi, \alpha, \beta}
}_{\min\{\beta/2, \wt{\beta}/2\}, \, \rho}
\to 0
\
\
\text{as}
\
\
   (\wt{\varphi}, \wt{\alpha}, \wt{\beta}) \to (\varphi, \alpha, \beta)
.
\]

\paragraph*{(ii).}

For $\wt{\alpha} \in [\alpha_*, \alpha^*], \,  \wt{\beta} \in [\beta_*, \beta^*]$
and
$\varphi, \wt{\varphi} \in D\left(A^{\beta^*/2+r}\right)$ for some $r>0$.
If we suppose further that
$f\left( t, w\right) \in C\left([0, T], D(A^r)\right)$
for any $w\in C_{\beta/2, \rho}(T)$,
then
\[
\tnorm
{
  u_{\wt{\varphi}, \wt{\alpha}, \wt{\beta}}
  -
  u_{\varphi, \alpha, \beta}
}_{\min\{\beta/2, \wt{\beta}/2\}, \, \rho}
\le
   C \| \varphi - \wt{\varphi} \|_{\beta^*/2+r}
+
   D
   \left(
      | \alpha - \wt{\alpha} | + | \beta -\wt{\beta}|
   \right)^{ \frac{r}{2(r+2\beta^*+1)}}
,
\]
where $C, D$ independent of $\alpha, \beta, \wt{\alpha}, \wt{\beta}, N$ and $\varphi, \wt{\varphi}$.
\end{theorem}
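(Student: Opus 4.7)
The plan is to mirror the three-step structure of Theorem \ref{thrm2} proved for the FIVP, adapting it to the Fredholm-type integral equation \eqref{solution-bw-nonlinear} for the FFVP. First I would introduce an intermediate solution $u_{\varphi, \wt{\alpha}, \wt{\beta}}$ (same target data $\varphi$, perturbed orders) and split
\begin{equation*}
u_{\wt{\varphi}, \wt{\alpha}, \wt{\beta}} - u_{\varphi, \alpha, \beta}
= \bigl(u_{\wt{\varphi}, \wt{\alpha}, \wt{\beta}} - u_{\varphi, \wt{\alpha}, \wt{\beta}}\bigr)
+ \bigl(u_{\varphi, \wt{\alpha}, \wt{\beta}} - u_{\varphi, \alpha, \beta}\bigr).
\end{equation*}
The first summand is the stability with respect to the final value $\varphi$ at fixed orders. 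Assumption \eqref{Km} ensures that the fixed-point map $\cq_{\varphi, \wt{\alpha}, \wt{\beta}, A}$ from Theorem \ref{existence-FVP} is a strict contraction with ratio $L_* < 1$ uniformly for $(\wt{\alpha}, \wt{\beta}) \in \Delta$. Combined with Lemma \ref{QG-es}(2), a standard Lipschitz-in-data fixed-point argument yields $\tnorm{u_{\wt{\varphi}, \wt{\alpha}, \wt{\beta}} - u_{\varphi, \wt{\alpha}, \wt{\beta}}}_{s, \rho} \le (1 - L_*)^{-1} E T^\rho \dnorm{\varphi - \wt{\varphi}}_s$, which delivers the first term of the Hölder estimate in (ii) and the data-continuity part of (i).

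For the second summand I would again insert and subtract to isolate the dependence on orders:
\begin{equation*}
u_{\varphi, \wt{\alpha}, \wt{\beta}} - u_{\varphi, \alpha, \beta}
= \bigl[\cq_{\varphi, \wt{\alpha}, \wt{\beta}, A}(u_{\varphi, \wt{\alpha}, \wt{\beta}}) - \cq_{\varphi, \wt{\alpha}, \wt{\beta}, A}(u_{\varphi, \alpha, \beta})\bigr]
+ \bigl[\cq_{\varphi, \wt{\alpha}, \wt{\beta}, A}(u_{\varphi, \alpha, \beta}) - \cq_{\varphi, \alpha, \beta, A}(u_{\varphi, \alpha, \beta})\bigr].
\end{equation*}
The first bracket is again absorbed by the contraction constant $L_*$ on the left-hand side. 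The second bracket is the real work: it decomposes into the two quantities
$\bigl[P_{\wt{\alpha}, \wt{\beta}}(A, t) G_{\varphi, \wt{\alpha}, \wt{\beta}, A}(u_{\varphi, \alpha, \beta}) - P_{\alpha, \beta}(A, t) G_{\varphi, \alpha, \beta, A}(u_{\varphi, \alpha, \beta})\bigr]$
and $\bigl[Q_{\wt{\alpha}, \wt{\beta}, A}(u_{\varphi, \alpha, \beta})(t) - Q_{\alpha, \beta, A}(u_{\varphi, \alpha, \beta})(t)\bigr]$, both of which I would handle via the spectral resolution, using Lemma \ref{trong1}(a)--(b) to control the functions and the inverse factor $\fm_\alpha^{-1}(-\lambda^\beta T^\alpha)$ (which is comparable to $\Gamma(1 - \alpha)(1 + \lambda^\beta T^\alpha)$), Lemma \ref{trong1}(c) to bound the pointwise-in-$\lambda$ difference of Mittag-Leffler functions by $C \lambda^{\beta^*}(1+\ln \lambda)(|\alpha - \wt{\alpha}| + |\beta - \wt{\beta}|)$, and Lemma \ref{trong1}(d) for the integrated difference appearing in $Q$.

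For part (i) I would then conclude by dominated convergence: the spectral integrands converge pointwise in $\lambda$ as $(\wt{\alpha}, \wt{\beta}) \to (\alpha, \beta)$ and are dominated by an integrable envelope coming from the uniform bounds of Lemma \ref{trong1}(a)--(b), so the right-hand side tends to zero. For the quantitative Hölder bound in part (ii), I would use the standard high/low frequency splitting at threshold $N$, exactly as in Lemma \ref{Step3}. On the low-frequency part $\{\lambda \le N\}$ the Lemma \ref{trong1}(c)--(d) estimates yield a bound proportional to $N^{2\beta^* + 1}(|\alpha - \wt{\alpha}| + |\beta - \wt{\beta}|)$ (the extra power comes from the inverse Mittag-Leffler factor in $P$); on the high-frequency part $\{\lambda > N\}$, the hypotheses $\varphi \in D(A^{\beta^*/2 + r})$ and $f(t, w) \in C([0, T], D(A^r))$ give decay of order $N^{-2r}$ uniformly in the orders. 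Balancing the two contributions by choosing $N = (|\alpha - \wt{\alpha}| + |\beta - \wt{\beta}|)^{-1/(2r + 2\beta^* + 1)}$ produces the exponent $r/(2(r + 2\beta^* + 1))$ stated in (ii).

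The main obstacle is the control of the inverse factor in $P_{\alpha, \beta}$. Unlike the FIVP, where Mittag-Leffler functions are uniformly bounded by Lemma \ref{trong1}(a), here $\fm_\alpha^{-1}(-\lambda^\beta T^\alpha)$ grows in $\lambda$ and its derivative with respect to $(\alpha, \beta)$ grows even faster. This is exactly why the contraction assumption \eqref{Km} is imposed (to close the fixed-point argument in Step 2) and why the extra regularity on $\varphi$ and $f$ is indispensable in (ii): it is the only mechanism that absorbs the polynomial growth of the difference of operators on high-frequency modes, making the spectral-cutoff optimization feasible.
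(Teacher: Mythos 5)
Your proposal follows essentially the same route as the paper: the same splitting into a data-perturbation part (absorbed by the uniform contraction ratio guaranteed by \eqref{Km}) and an order-perturbation part, the same decomposition of the latter into the $P\cdot G$ and $Q$ differences handled spectrally via Lemma \ref{trong1}(b)--(d), and the same low/high-frequency cutoff at $N$ balanced against the $N^{-r}$ tail decay supplied by the extra regularity of $\varphi$ and $f$. The only slip is your choice $N=(|\alpha-\wt{\alpha}|+|\beta-\wt{\beta}|)^{-1/(2r+2\beta^*+1)}$, which should read $N=(|\alpha-\wt{\alpha}|+|\beta-\wt{\beta}|)^{-1/(2(r+2\beta^*+1))}$ to produce the stated exponent; otherwise the argument matches the paper's.
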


\begin{proof}
In the proof of this theorem, we use the following notation
\[
   \cq_{\varphi, \alpha, \beta, A}(u)(t)
=
   P_{\alpha, \beta}(A, t) G_{\varphi, \alpha, \beta, A}(u)
  +
   Q_{\alpha, \beta, A} (u)(t)
.
\]
Base on the assumption (\ref{Km}) and according to part (ii) of Theorem \ref{existence-FVP},
we known that the FFVP has a unique solution $u_{\varphi, \alpha, \beta}$ in $C_{s, \rho}$
which is
the (unique) fixed point of the mapping $\cq_{\varphi, \alpha, \beta, A}(u)$
and has an upper bound estimate (see  (\ref{QF-es}))
\begin{eqnarray}
\label{F-upper}
   \tnorm{u}_{s, \rho}
&\le &
   (1-L)^{-1}
   \left(
      E T^\rho \dnorm{\varphi}_s
      +
      \sqrt{2}(1+E) \theta^{s-\beta/2}
     \left(\frac{\Theta_T(\alpha)}{\Gamma(\alpha)} \right)^{1/2}
   \right)
\nn
\\
&\le &
   (1-\kappa/K_m)^{-1}
   \left(
      E T^\rho \dnorm{\varphi}_{\beta^*/2+r}
      +
      \sqrt{2}(1+E) \theta^{\beta_*-\beta^*/2}
     \left(\frac{\Theta_T(\alpha)}{\Gamma(\alpha)} \right)^{1/2}
   \right)
\end{eqnarray}
since  $L=\sqrt{2} \kappa (1+1/E)/ K_0 \le \kappa/K_m$.
We emphasize that the upper bound above  is independent of  $s$.

For the convenience in writing, we denote
$
  u_{\varphi, \alpha, \beta}
  ,
  u_{\varphi, \wt{\alpha}, \wt{\beta}}
$
and
$
  u_{\wt{\varphi}, \wt{\alpha}, \wt{\beta}}
$
the solution of the nonlinear FFVP the problems
$
    \cq_{\varphi, \alpha, \beta, A}(u)=u
    ,
    \cq_{\varphi, \wt{\alpha}, \wt{\beta}, A}(v)=v
$
and
$
    \cq_{\wt{\varphi}, \wt{\alpha}, \wt{\beta}, A}(w)=w
$,
respectively.
In order to obtain the result of this part, we need the following three  essential Lemmas.
Readers can find the proofs of these Lemmas in the appendices \ref{app-Step1-2}, \ref{app-Step2-2},
and \ref{app-Step3-2}.

\begin{lemma}
\label{Step1-2}
Let
$\left(\wt{\alpha}, \wt{\beta}\right) \in \Delta$,
and $\varphi, \wt{\varphi} \in H^{\beta^*/2+r}$ for some $r \ge 0$.
Then,
\[
\tnorm
{
   u_{\wt{\varphi}, \wt{\alpha}, \wt{\beta}}
   -
   u_{\varphi, \wt{\alpha}, \wt{\beta}}
}_{\gamma, \, \rho}
 \le
\left(1-\kappa/K_m\right)^{-1}
E T^{\rho} \max\left\{1, \, \theta^{\gamma-r-\beta^*/2}\right\}
\| \wt{\varphi} - \varphi\|_{\beta^*/2+r}
,
\]
where $\gamma=\min\{\beta/2, \wt{\beta}/2 \}$.
\end{lemma}

\begin{lemma}
\label{Step2-2}
Suppose that
$\alpha \in [\alpha_*, \alpha^*]$,
and
$\beta, \wt{\beta} \in [\beta_*, \beta^*]$.
Then, there exists a constant
$D$ independent of $\alpha, \, \beta, \, \wt{\alpha}, \, \wt{\beta}, \, N$ such that
\[
\tnorm
{
  u_{\varphi, \wt{\alpha}, \wt{\beta}}
  -
  u_{\varphi, \alpha, \beta}
}_{\gamma, \, \rho}
\le
(1-\kappa/K_m)^{-1}
\left(
      \epsilon
      +
      D N^{2 \beta^*} \ln N
      \left(| \alpha - \wt{\alpha} | + | \beta -\wt{\beta}|\right)^{1/2}
\right),
\]
where $\kappa$ and $K_m$ as in (\ref{Km}),
$\gamma=\min\{\beta/2, \wt{\beta}/2 \}$.
\end{lemma}

\begin{lemma}
\label{Step3-2}
Suppose that $f(t, w) \in C([0, T], H^r)$ for any $w \in C_{\beta/2, \rho}(T)$ and for some $r>0$,
then, for $N$ large enough, we have
\[
\tnorm
{
u_{\varphi, \wt{\alpha}, \wt{\beta}}
-
u_{\varphi, \alpha, \beta}
}_{\gamma, \, \rho}
\le
E_1 N^{ - r }
+
E_2 N^{2 \beta^*} \ln N
\left( | \alpha - \wt{\alpha} | + | \beta -\wt{\beta}| \right)^{1/2},
\]
where $\gamma=\min\{\beta/2, \wt{\beta}/2 \}$,
and $E_1$, $E_2$ independent of
$
\alpha, \, \beta, \, \wt{\alpha}, \, \wt{\beta}, \, N
$.
\end{lemma}

Now, we use the above Lemmas to prove the results of the Theorem.
Combining Lemmas \ref{Step1-2} with \ref{Step2-2}, we obtain the result of part (i).

For part (ii), we can use triangle inequality,
Lemma \ref{Step1-2}, and Lemma \ref{Step3-2} to get that
\begin{eqnarray}
\label{es-final}
\lefteqn{
\tnorm
{
  u_{\varphi, \alpha, \beta}
  -
  u_{\wt{\varphi}, \wt{\alpha}, \wt{\beta}}
}_{\min\{\beta/2, \wt{\beta}/2\}, \, \rho}
}
\nn
\\
&\le &
C \dnorm{\varphi - \wt{\varphi}}_{\beta^*/2+r}
+
E_1 N^{ - r }
+
E_2 N^{2 \beta^*} \ln N
\left(| \alpha - \wt{\alpha} | + | \beta -\wt{\beta}|\right)^{1/2}.
\end{eqnarray}
For $| \alpha - \wt{\alpha} |$, $| \beta -\wt{\beta}|$ small enough,
let us choose
$
N
 =
\left[
\left(
   | \alpha - \wt{\alpha} | + | \beta -\wt{\beta}|
\right)^{ - \frac{1}{2(r+2\beta^*+1)}}
\right] +1
$.
~
~
We note that $\ln N \le N$
and
$
\left(
   | \alpha - \wt{\alpha} | + | \beta -\wt{\beta}|
\right)^{ - \frac{1}{2(r+2\beta^*+1)}}
<
N
\le
2
\left(
   | \alpha - \wt{\alpha} | + | \beta -\wt{\beta}|
\right)^{ - \frac{1}{2(r+2\beta^*+1)}}
$.
Hence, by (\ref{es-final}), we infer
\begin{equation*}
\tnorm
{
u_{\varphi, \alpha, \beta}
-
u_{\wt{\varphi}, \wt{\alpha}, \wt{\beta}}
}_{\min\{\beta/2, \wt{\beta}/2\}, \, \rho}
\le
C \dnorm{\varphi - \wt{\varphi}}_{\beta^*/2+r}
+
D
\left(
   | \alpha - \wt{\alpha} | + | \beta -\wt{\beta}|
\right)^{ \frac{r}{2(r+2\beta^*+1)}},
\end{equation*}
where $D=E_1+ 2^{2\beta^*+1}E_2$.
This completes the proof of Part (ii) and the proof of the Theorem.
\end{proof}

\subsection{Regularization result for $t=0$}

In this part, we study the ill-posedness of the FFVP at $t=0$,
after that, we introduce a regularization method for this problem.
~
~
Now, let us analyze the ill-posedness of the FFVP at $t=0$.
Indeed, when the fractional orders are fixed, solution of
the FFVP is instable. Readers can see in \cite{YangRenLi}.
Hence, we only analyze the instability of solution of the FFVP with respect
to fractional parameters.
To simply, we consider the FFVP in the case of the homogeneous problem
and that
the operator $A$ has  the system  $(\lambda_k,\phi_k)$ of
eigenvalues $\{\lambda_k\}$ and eigenfunctions $\{\phi_k\}$, respectively, with
\[
   0< \lambda_1<\lambda_2<..<\lambda_k<..,
   \, \,
      \lim_{k \to \infty} \lambda_k = \infty.
\]
and the functions $\{\phi_k\}$ being an orthonormal basis of the space $H$.

For $\lambda_n>1$, we put
\[
  \Phi_n
  =
  \frac{\phi_n}{\lambda^\beta_n \ln \lambda_n}.
\]
It is easy to see that $\|\Phi_n\| \to 0$ as $n\to \infty$.
We denote the solution of the homogeneous FFVP corresponding
to the final data $u(T)=\Phi_n$ and the fractional orders $\alpha, \beta$ by
$u_{\Phi_n, \alpha, \beta}$.
If the fractional orders $\alpha, \beta$ are fixed,
using Lemma \ref{trong1}, we have
\[
\dnorm{u_{\Phi_n, \alpha, \beta}}^2
=
\left|
      \frac{1}{\lambda^\beta_n \ln \lambda_n}
      \frac{1}{E_\alpha(-\lambda_n^\beta T^\alpha)}
\right|^2
\le
 \frac{C}{\ln^2 \lambda_n}
\to 0
   \, \,
(n\to \infty).
\]
Now, we perturb the fractional orders by $\beta_n = \beta+ \epsilon_n$
with $\epsilon_n = \frac{2 \ln \ln \lambda_n}{\ln \lambda_n}$, then
we have $\beta_n-\beta \to 0$ as $n\to \infty$.
\[
\dnorm{u_{\Phi_n, \alpha, \beta_n}}^2
=
\frac{1}{\lambda^{2\beta}_n \ln^2 \lambda_n}
\left|
      \frac{1}{E_\alpha(-\lambda_n^{\beta+\epsilon_n} T^\alpha)}
\right|^2.
\]
Since  $\lambda_n^{\epsilon_n} = \ln^2 \lambda_n$,
using Lemma \ref{trong1}, there exist two constants $C_1, C_2$
dependent only on $\alpha$ such that
\[
   \| u_{\Phi_n, \alpha, \beta_n}\|
  \ge
   \frac{1}{\lambda^{\beta}_n \ln \lambda_n}
   \left| \frac{1}{E_\alpha(-\lambda_n^{\beta+\epsilon_n} T^\alpha)} \right|
 \ge
   \frac{ C_1 \lambda_n^{\epsilon_n}}{\ln \lambda_n}
   \to  \infty
\]
as $n \to \infty$.
Hence, the solution of the FFVP is instable.
This example also shows that the difference  from the case of the fractional orders are fixed.
So a method to regularize solution of the FFVP in case of the fractional orders are perturbed is in order.

From our discussion above, we only give a method to
regularize the FFVP at $t=0$.

Let $T^*$ be the number defined in Theorem \ref{Stability-FVP} and $T\in (0, T^*)$.
The final value $u(T)=\varphi$ is given.
Let $\epsilon \in(0,1)$ and $0<\alpha_*<\alpha^*<1$, $0<\beta_*<\beta^*$.
Assume that the measurement data
$
\alpha_\epsilon \in [\alpha_*, \alpha^*],
\,
\beta_\epsilon \in [\max\{\sigma, \beta_*\}, \beta^*]
$
and $\varphi_\epsilon \in H^{\beta^*/2+r}$ for some $r>0$ which satisfy the following conditions
\begin{equation} \label{data-nonlinear}
   | \alpha - \alpha_\epsilon| \le \epsilon, \, |\beta - \beta_\epsilon| \le \epsilon, \,
   \|\varphi - \varphi_\epsilon\|_{\beta^*/2+r} \le \epsilon.
\end{equation}
Before stating the regularized result, we define
\begin{equation}
\label{H-gamma}
C^\rho_\gamma(T)
=
\left\{
w \in \left( C[0, T];H \right):
\dnorm{u(t)-u(0)}_\gamma
\le
E t^\rho
\, \,
\text{for all}
\,\,
t\in[0, T]
\right\}
,
\end{equation}
where $E$ independent of $t$ and $\gamma$.
~
Let us give an example on a class of functions as defined in  (\ref{H-gamma}).

{\bf Example:}
Put $\Omega = (0, 1)$, $H=L^2(\Omega)$, $n \in \mathbb{N}$,
we consider a class of functions $u \in C\left([0, T];  L^2(\Omega)\right)$ such that
\[
\frac{\partial^{k+1}}{\partial x^k \partial t}u(x, t)
\in
L^2\left(0, T; L^2(\Omega) \right)
\, \,
\text{for}
\,
\,
k=\overline{0,n}
.
\]
Then, we have
\[
\frac{\partial^k }{\partial^k x}
\left( u(x, t) - u(x, 0) \right)
 =
\int_0^t
   \frac{\partial^{k+1}}{\partial x^k\partial s}
   u(x, s)
\ud s
\]
for any $k=\overline{0, n}$.
Consequently,
\begin{eqnarray*}
\dnorm{u(., t) - u(., 0) }_n^2
 &=&
\sum_{k=0}^{n}
\int_0^1
\left|
\int_0^t
\frac{\partial^{k+1}}{\partial x^k \partial s}u(x,s)
\ud s
\right|^2\ud x
\nn
\\
& \le &
t
\sum_{k=0}^{m_0}
\int_0^1 \int_0^t
 \left| \frac{\partial^{k+1} }{\partial x^k \partial s}u(x,s) \right|^2
\ud x \ud s
\le
D_0^2 t.
\end{eqnarray*}
~
This implies
\begin{equation*}
\dnorm{u_{\varphi, \alpha, \beta}(., t) - u_{\varphi, \alpha, \beta}(., 0)}_n
\le
D_0 t^{1/2},
 \end{equation*}
 where
 $
 D_0
 =
 \left(
 \sum_{k=0}^n
 \dnorm{\frac{\partial^{k+1} u_{\varphi, \alpha, \beta}}{\partial x^k \partial t}}^2_{L^2(0, T; L^2(\Omega))}
 \right)^{1/2}
$.
Thus,
$
u
\in
C^{1/2}_n(T)
$.
This shows the reasonableness of the definition in (\ref{H-gamma}).

Base on the above notations, we have the following theorem.

\begin{theorem}
Let $f, \, \alpha, \, \beta, \, \gamma$ as in Lemma \ref{QG-es}.
Let $\varphi$ be the final data which is belong to $H^{\beta^*/2+r}$ for some $r>0$,
and let $\alpha_\epsilon, \beta_\epsilon, \varphi_\epsilon$
be measurement which satisfy (\ref{data-nonlinear}).
We assume that $u_{\varphi, \alpha, \beta} \in C^\rho_\sigma(T)$ for some $\rho>0$,
and the assumptions of part (c) of the Theorem \ref{Stability-FVP} holds.
Then, we approximate the initial data
 $u_{\varphi, \alpha, \beta}(0)$
 by
 $u_{\varphi_\epsilon, \alpha_\epsilon, \beta_\epsilon}( t_\epsilon)$
 with $t_\epsilon = \epsilon^{\frac{ r } {2(\alpha^*+\rho)(r+2\beta^*+1)}}$.
More specifically,
we have the following estimate
\[
\|
u_{\varphi_\epsilon, \alpha_\epsilon, \beta_\epsilon}(t_\epsilon)
 -
u_{\varphi, \alpha, \beta}(0)
\|_\sigma
\le
P \epsilon^{\frac{ r \rho } {2(\alpha^*+\rho)(r+2\beta^*+1)}},
 \]
 where $P$ independent of $\epsilon$.
\end{theorem}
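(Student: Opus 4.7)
The strategy is a single triangle inequality at the intermediate value $u_{\varphi,\alpha,\beta}(t_\epsilon)$, splitting the error into a ``noise amplification'' piece (from the perturbed data $\varphi_\epsilon,\alpha_\epsilon,\beta_\epsilon$) and a ``bias'' piece (from how far the exact solution at time $t_\epsilon$ sits from its initial trace). Concretely,
\[
\dnorm{u_{\varphi_\epsilon,\alpha_\epsilon,\beta_\epsilon}(t_\epsilon)-u_{\varphi,\alpha,\beta}(0)}_\sigma
\le
\dnorm{u_{\varphi_\epsilon,\alpha_\epsilon,\beta_\epsilon}(t_\epsilon)-u_{\varphi,\alpha,\beta}(t_\epsilon)}_\sigma
+
\dnorm{u_{\varphi,\alpha,\beta}(t_\epsilon)-u_{\varphi,\alpha,\beta}(0)}_\sigma
= I_1+I_2.
\]
The bias term $I_2$ is read off directly from the assumption $u_{\varphi,\alpha,\beta}\in C^{\rho}_\sigma(T)$, yielding $I_2\le E\,t_\epsilon^{\rho}$. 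The noise term $I_1$ is handled by trading pointwise control at $t=t_\epsilon$ for the weighted supremum norm via $\dnorm{w(t_\epsilon)}_\sigma\le t_\epsilon^{-\rho_w}\tnorm{w}_{\sigma,\rho_w}$ and then invoking Theorem~\ref{Stability-FVP}(ii).

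For the noise term, I would take the weight $\rho_w$ to its minimum admissible value $\rho_w=\alpha^*$ (the standing hypothesis $\rho\ge\alpha^*$ in Theorem~\ref{Stability-FVP} is then saturated, and $\nu<1/2-\alpha^*$ is guaranteed by our choice of $\nu$). Assuming $\sigma\le\min\{\beta/2,\beta_\epsilon/2\}$ and applying the embedding $\dnorm{\cdot}_p\le\theta^{p-q}\dnorm{\cdot}_q$ to pass from the $\min\{\beta/2,\beta_\epsilon/2\}$-norm given by Theorem~\ref{Stability-FVP}(ii) down to the $\sigma$-norm, I obtain
\[
I_1\le t_\epsilon^{-\alpha^*}\Bigl(C\,\dnorm{\varphi-\varphi_\epsilon}_{\beta^*/2+r}+D\bigl(|\alpha-\alpha_\epsilon|+|\beta-\beta_\epsilon|\bigr)^{\frac{r}{2(r+2\beta^*+1)}}\Bigr)
\le C'\,t_\epsilon^{-\alpha^*}\,\epsilon^{\frac{r}{2(r+2\beta^*+1)}},
\]
where in the last inequality I used \eqref{data-nonlinear} together with $\epsilon\in(0,1)$ to absorb the linear-in-$\epsilon$ contribution into the slower Hölder-in-$\epsilon$ term.

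The last step is to balance $I_1$ and $I_2$ by choosing $t_\epsilon$ so that $t_\epsilon^{\rho+\alpha^*}\sim\epsilon^{r/(2(r+2\beta^*+1))}$, which gives exactly the prescribed
\[
t_\epsilon=\epsilon^{\frac{r}{2(\alpha^*+\rho)(r+2\beta^*+1)}},
\]
and makes both $I_1$ and $I_2$ of order $\epsilon^{r\rho/(2(\alpha^*+\rho)(r+2\beta^*+1))}$, yielding the stated H\"older-type rate. The remaining work is purely bookkeeping: confirm that for $\epsilon$ small we have $t_\epsilon\in(0,T)$ and $(\alpha_\epsilon,\beta_\epsilon)\in\Delta$, so that $u_{\varphi_\epsilon,\alpha_\epsilon,\beta_\epsilon}$ is a well-defined solution in $C_{\sigma,\alpha^*}(T)$ by Theorem~\ref{existence-FVP}(ii), and that the standing smallness hypothesis $\kappa<K_m$ from Theorem~\ref{Stability-FVP} persists under the perturbation.

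The main obstacle I anticipate is not analytical but notational: the index $\rho$ in the regularization theorem plays a different role than the weight index $\rho$ in the stability estimate (one is the H\"older exponent in $C^\rho_\sigma(T)$, the other is the blow-up weight in $C_{s,\rho}(T)$). The clean appearance of $\alpha^*$ in the final exponent is precisely the signature that the stability weight has been taken to its smallest permissible value $\alpha^*$; misaligning these two uses of ``$\rho$'' would produce a different and incorrect exponent. Beyond this, one should verify that $\sigma\le\beta_*/2$ (or another condition ensuring $\sigma\le\min\{\beta/2,\beta_\epsilon/2\}$ uniformly for $\epsilon$ small) so that the norm-embedding step in the estimate of $I_1$ is legitimate.
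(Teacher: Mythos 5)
Your proposal is correct and follows essentially the same route as the paper: a triangle inequality at $u_{\varphi,\alpha,\beta}(t_\epsilon)$, the stability estimate of Theorem \ref{Stability-FVP} with the $t_\epsilon^{-\alpha^*}$ amplification for the noise term, the $C^\rho_\sigma(T)$ hypothesis for the bias term, and the same balancing choice of $t_\epsilon$. Your remark distinguishing the two roles of $\rho$ (H\"older exponent versus weight, the latter taken at its minimal value $\alpha^*$) correctly pinpoints what the paper does implicitly.
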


\begin{proof}
We put $t_\epsilon = \epsilon^{\frac{ r } {2 (\alpha^*+\rho)(r+2\beta^*+1)}}$.
Using Part (c) of Theorem \ref{Stability-FVP} and (\ref{data-nonlinear}), we get that
\begin{equation}
\label{esitamte-Hm-t}
\dnorm
{
u_{\varphi_\epsilon, \alpha_\epsilon, \beta_\epsilon}(t_\epsilon)
-
u_{\varphi, \alpha, \beta}(t_\epsilon)
}_\sigma
\le
D_1 t_\epsilon^{ - \alpha^*} \epsilon^{\frac{ r } {2(r+2\beta^*+1)}}
\le
D_1\epsilon^{\frac{ r \rho } {2(\alpha^*+\rho)(r+2\beta^*+1)}},
\end{equation}
where $D_1=C+D$ with $C, D$ defined in Part (c) of the Theorem \ref{Stability-FVP}.
On the other hand, since $u_{\varphi, \alpha, \beta} \in C^\rho_\sigma(T)$, this gives
\[
\dnorm
{
u_{\varphi, \alpha, \beta}(t_\epsilon)
-
u_{\varphi, \alpha, \beta}(0)
}_\sigma
\le
E \epsilon^{\frac{ r \rho } {2(\alpha^*+\rho)(r+2\beta^*+1)}}
\]
Combining the latter inequality with (\ref{esitamte-Hm-t}),
we obtain
\[
\dnorm
{
u_{\varphi_\epsilon, \alpha_\epsilon, \beta_\epsilon}( t_\epsilon) - u(0)
}_\sigma
\le
P \epsilon^{\frac{ r \rho} {2(\alpha^*+\rho)(r+2\beta^*+1)}},
 \]
 where $P=D_1+E$.
 This completed the proof of the theorem.
\end{proof}

%
%
%


\section{Proofs}

\subsection{The proof of Lemma \ref{gronwall}.}
\label{gronwall-app}

Put
\[
  Su(t)=v(t) + g(t) \int_0^t (t - \tau )^{\alpha -1} \tau^{-q} u(\tau) \ud \tau.
\]
Using the similar technique as in Theorem \ref{global-existence},
we can prove that there exists $k_0 \in \mathbb{N}$ such that $S^{k_0}$ is contract in $C[0,T]$.
Consequently, there exists a unique $u \in C[0, T]$ such that $u=Su$.

We put $u_0=0, u_{n+1}=Su_{n}$.
The function can be represented by the series $u=\sum_{n=1}^\infty (u_{n+1}-u_{n})$ .
The Weierstrass theorem shows that the series converges in $C[0,T]$ and
\begin{eqnarray*}
   | u(t)|
&\le &
   \| u_1-u_0\|_{C[0,t]}
   \sum_{k=0}^\infty
      \frac{
        \Gamma(1-q)(\| g\|_{C[0,t]}\Gamma(\alpha))^k t^{k(\alpha-q)}
      }{
        \Gamma(k(\alpha-q)-q+1)
      }
\\
&=&
   \Gamma(1-q) \| v\|_{C[0,t]}
   E_{\alpha-q, 1-q} \left(\| g\|_{C[0,t]}\Gamma(\alpha) t^{\alpha-q}\right).
\end{eqnarray*}
Now, we prove the final inequality. Put $w_0=S(w)$, $w_{n+1}=S(w_n)$. Since $g(t)\geq 0$ for $t\in [0,T]$, we have $S(w_1)(t)\leq S(w_2)(t)$ for $w_1(t)\leq w_2(t)$, $t\in [0,T]$. We note that $w\leq w_0$, hence, by induction we obtain $w_{n}\leq w_{n+1}$. Using the contraction principle we obtain $\lim_{n\to\infty}\| w_{n}-u\|_{C[0,T]}=0$. Since $w_n\leq w_{n+1}$ for every $n=0,1,\ldots$, we obtain $w(t)\leq w_0(t)\leq u(t)$ for $t\in [0,T]$. From \eqref{u-bound} we obtain the desired inequality.

\subsection{The proof of Lemma \ref{Q-lemma}.}
\label{Q-lemma-app}

We first prove (i). We have
\begin{eqnarray*}
   \dnorm{E_\alpha(-t^\alpha A^\beta)\zeta}^2_{s+r}
& = &
   \int_\theta^\infty \lambda^{2(s+r)}
   E_\alpha(-t^\alpha \lambda^\beta) \ud \dnorm{S_\lambda\zeta}^2
\\
&\le &
  \int_\theta^\infty \lambda^{2(s+r)}
  \ud\dnorm{S_\lambda\zeta}^2
=
   \|\zeta\|^2_{s+r}
\end{eqnarray*}
Now, we consider Part (ii). We have $E_{\alpha,\alpha}(z)\ge 0$ (see \cite{Gorenflo-Mittag}, Chap. 3).
 Hence, Lemma \ref{sakamoto} yields
\begin{equation}
\label{es-E-aa}
   \int_{t_1}^{t_2} |\E_{\alpha, \beta}(\lambda, t_2, \tau)| \ud \tau
=
   \int_{t_1}^{t_2} \E_{\alpha, \beta}(\lambda, t_2, \tau) \ud \tau
=
   \frac{1}{\lambda^\beta} H_0(\lambda,t_1,t_2)
\end{equation}
Hence, by the Holder inequality, Lemma \ref{trong1} and \eqref{spectral-representation}, we obtain for $t\in (0,T]$
\begin{eqnarray*}
   \dnorm{Q_{ \alpha, \beta, A}(w)( t_1,t_2 )}_{s+r}^2
&=&
     \dnorm
     {
        \int_{t_1}^{t_2}  |\E_{\alpha, \beta}(A, t_2, \tau)| w(\tau) \ud \tau
     }_{s+r}^2 \nonumber
\\
&\le &
         \int_\theta^{+\infty}
            \lambda^{2(s+r)}
            \int_{t_1}^{t_2}
               |\E_{\alpha, \beta}(\lambda, t_2, \tau)|
            \ud \tau
            \times
            \int_{t_1}^{t_2}
                |\E_{\alpha, \beta}(\lambda, t_2, \tau)|
                \ud\dnorm{\es w(\tau)}^2
            \ud \tau\nonumber
\\
&\le &
    \int_{t_1}^{t_2}
      \int_\theta^{+\infty}
         \frac{\lambda^{2s}}{\lambda^\beta} H_0(\lambda,t_1,t_2)
         |\E_{\alpha, \beta}(\lambda, t_2, \tau)|
      \lambda^{2r}
      \ud\dnorm{\es w(\tau)}^2
   \ud \tau\nonumber
\\
&< &
  \frac{1}{\Gamma(\alpha)}
  \sup_{\lambda\geq\theta} \lambda^{2s-\beta} H_0(\lambda,t_1,t_2)
   \int_{t_1}^{t_2}
   (t_2 -\tau)^{\alpha-1}  \dnorm{w(\tau)}_r^2
   \ud \tau.
\end{eqnarray*}
For  $0<t_1<t_2$, we have
\begin{eqnarray}
   \dnorm{R_{ \alpha, \beta, A}(w)( t_1, t_2 )}_{s+r}^2
&=&
     \dnorm
     {
        \int_0^{t_1}  |\E_{\alpha, \beta}(A, t_1, \tau)-\E_{\alpha, \beta}(A, t_2, \tau)| w(\tau) \ud \tau
     }_{s+r}^2 \nonumber
\\
&\le &
         \int_\theta^{+\infty}
            \lambda^{2(s+r)}
            \int_0^{t_1}
               |\E_{\alpha, \beta}(\lambda, t_1, \tau)-\E_{\alpha, \beta}(\lambda, t_2, \tau)|
            \ud \tau   \nonumber\\
& &      \times
            \int_0^{t_1}
                |\E_{\alpha, \beta}(\lambda, t_1, \tau)-\E_{\alpha, \beta}(\lambda, t_2, \tau)|
                \ud\dnorm{\es w(\tau)}^2
            \ud \tau\nonumber
\end{eqnarray}
From the complete monotonicity of the Mittag-Leffler function $E_{\alpha}(-z)$ for $z\geq 0$ (see \cite{Gorenflo-Mittag}, Chap. 3)
we have  $E_{\alpha,\alpha}(-z)\geq 0$ is decreasing and  $0\leq E_{\alpha}(-z)\leq 1$ for $z\geq 0$. Hence,  we obtain
\begin{eqnarray*}
   |\E_{\alpha, \beta}(\lambda, t_1, \tau)-\E_{\alpha, \beta}(\lambda, t_2, \tau)|
&=&
   -\E_{\alpha, \beta}(\lambda, t_1, \tau)
+
   \E_{\alpha, \beta}(\lambda, t_2, \tau)\\
&=&
   \frac{1}{\lambda^\beta}
   \frac{d}{d\tau}
   (
      E_\alpha(-\lambda^\beta(t_1-\tau)^\alpha)
      -
      E_\alpha(-\lambda^\beta(t_2-\tau)^\alpha)
   ).
\end{eqnarray*}
It follows that
\begin{eqnarray*}
   \int_0^{t_1}
      |\E_{\alpha, \beta}(\lambda, t_1, \tau)-\E_{\alpha, \beta}(\lambda, t_2, \tau)|
   \ud\tau
&=&
   \frac{1}{\lambda^\beta}H(\lambda,t_1,t_2)
\end{eqnarray*}
Hence
\begin{eqnarray*}
\dnorm{R_{ \alpha, \beta, A}(w)( t_1, t_2 )}_{s+r}^2 &\le &
    \int_0^{t_1}
      \int_\theta^{+\infty}
         \frac{\lambda^{2s}}{\lambda^\beta}
         |\E_{\alpha, \beta}(\lambda, t_1, \tau)-\E_{\alpha, \beta}(\lambda, t_2, \tau)|
      \lambda^{2r}
      \ud\dnorm{\es w(\tau)}^2
   \ud \tau\nonumber
\\
&\le &
   \frac{1}{\Gamma(\alpha)}
   \sup_{\lambda \ge \theta}\lambda^{2s-\beta}H(\lambda,t_1,t_2)
   \int_0^{t_1}
   (t_1 -\tau)^{\alpha-1}  \dnorm{w(\tau)}_r^2
   \ud \tau.
\end{eqnarray*}

\subsection{The proof of Lemma \ref{Step1}.}
\label{app-step1}

We can use (\ref{Q-es}) and by direct computations yield
\begin{align*}
\lefteqn
{
\dnorm
{
\F_{\xi, \wt{\alpha}, \wt{\beta}, A}(w) (t)
-
\F_{\zeta, \wt{\alpha}, \wt{\beta}, A}(v) (t)
}^2_s
}
\\
&\le
2 \dnorm
{
   E_{\wt{\alpha}} \big(- A^{\wt{\beta}} t^{\wt{\alpha}} \big)
   (\xi-\zeta)
}_s^2
+
2 \dnorm
{
    Q_{\wt{\alpha}, \wt{\beta}, A}(w)(0, t)
    -
    Q_{\wt{\alpha}, \wt{\beta}, A}(v)(0, t)
}_s^2
\\
& \le
2 \dnorm{\zeta - \xi}_s^2
+
\frac{2}{\Gamma(\wt{\alpha})}\theta^{2s-\wt{\beta}}
\int_0^t
   (t-\tau)^{\wt{\alpha} - 1}
   \dnorm{f( \tau, w) - f( \tau, v)}^2
\ud \tau
\\
& \le
2 \dnorm{\zeta - \xi}_s^2
+
\frac{2}{\Gamma(\wt{\alpha})} \theta^{2s-\wt{\beta}} L^2(\m)
\int_0^t
   (t-\tau)^{\wt{\alpha} - 1} \tau^{-2\nu}
   \dnorm{w(\tau) - v(\tau)}_s^2
\ud \tau.
\end{align*}
Since
$
u_{\xi, \wt{\alpha}, \wt{\beta}}$ and  $u_{\zeta, \wt{\alpha}, \wt{\beta}}
$
are solution of equations
$
\F_{\xi, \wt{\alpha}, \wt{\beta}, A}(w) = w
$
and
$
\F_{\xi, \wt{\alpha}, \wt{\beta}, A}(v) = v
$,
respectively, by Lemma \ref{gronwall}, we conclude that
\begin{eqnarray*}
   \dnorm{
      u_{\xi, \wt{\alpha}, \wt{\beta}}(t)
     -
      u_{\zeta, \wt{\alpha}, \wt{\beta}}(t)
   }_s^2
\le
   2 \Gamma(1-2\nu)
   E_{\wt{\alpha}-2\nu, 1-2\nu}
      \left( 2 \theta^{2s-\wt{\beta}} L^2(\m) t^{\wt{\alpha}-2\nu}\right)
   \dnorm{\zeta - \xi}_s^2.
\end{eqnarray*}
This leads to the result of Lemma \ref{Step1}.

\subsection{The proof of Lemma \ref{Step2}.}
\label{app-step2}

By direct computation, we have
\begin{eqnarray}
\label{q-al-al}
\dnorm
{
   \F_{\zeta, \wt{\alpha}, \wt{\beta}, A}(v)(t)
   -
   \F_{\zeta, \alpha, \beta, A}(u)(t)
}_s^2
&\le &
2 \dnorm
{
  \left(
     E_{\wt{\alpha}} \big(- A^{\wt{\beta}} t^{\wt{\alpha}} \big)
     -
     E_\alpha \big(- A^\beta t^\alpha \big)
  \right) \zeta
}_s^2
 \nn
 \\
 & & \, +
2 \dnorm
{
     Q_{\wt{\alpha}, \wt{\beta}, A}(v)(0, t)
     -
     Q_{\alpha, \beta, A}(u)(0, t)
}_s^2
\nn
\\
& \le &
2  I_1 + 4(I_2+I_3),
\end{eqnarray}
where
\begin{eqnarray*}
I_1
& =  &
\dnorm
{
  \left(
     E_{\wt{\alpha}} \big(- A^{\wt{\beta}} t^{\wt{\alpha}} \big)
     -
     E_\alpha \big(- A^\beta t^\alpha \big)
  \right) \zeta
}_s^2
,
\\
I_2
&= &
\dnorm
{
     Q_{\wt{\alpha}, \wt{\beta}, A}(v)(0, t)
     -
     Q_{\wt{\alpha}, \wt{\beta}, A}(u)(0, t)
}_s^2
,
\\
I_3
& = &
\dnorm
{
     Q_{\wt{\alpha}, \wt{\beta}, A}(u)(0, t)
     -
     Q_{\alpha, \beta, A}(u)(0, t)
}_s^2
  ,
\end{eqnarray*}
and function $Q$ defined in (\ref{Q-define}).
%
%
We will estimate $I_k (k=1, 2, 3)$ one by one.
\\
\textit{ \bf Estimate for  $I_1$}.
To give an estimation for $I_1$, we separate the sum $I_1$ into two sum as follows
\begin{equation}
\label{i1}
   I_1 = I_{11}(N) + I_{12}(N),
\end{equation}
where
\begin{eqnarray*}
I_{11}( N)
&=&
\int_\theta^N
  \lambda^{2s}
  \left|
      E_{\wt{\alpha}} \big(- \lambda^{\wt{\beta}} t^{\wt{\alpha}} \big)
      -
       E_\alpha \big(- \lambda^\beta t^\alpha \big)
  \right|^2
\ud \dnorm{\es \zeta}^2,
 \\
I_{12}(N)
&=&
\int_{\lambda>N}
  \lambda^{2s}
  \left|
      E_{\wt{\alpha}} \big(- \lambda^{\wt{\beta}} t^{\wt{\alpha}} \big)
      -
       E_\alpha \big(- \lambda^\beta t^\alpha \big)
  \right|^2
\ud \dnorm{\es \zeta}^2.
\end{eqnarray*}
For convenience in estimating for $I_{11}(N), I_{12}(N)$,
let us assume $N> \max\{e, \theta\}$.
\\
\textit{\bf Estimate for $I_{11}(N)$.} By Lemma \ref{trong1},
there exist two constants
$
 C=C(\alpha_*, \alpha^*, \beta_*, \beta^*, T)>0
$
,
$
 C_0=C_0(\alpha_*, \alpha^*, \beta_*, \beta^*, \theta, T)>0
$
such that
\begin{eqnarray}
\label{i11}
I_{11}(N)
& \le &
C \left( |\alpha - \wt{\alpha}| + |\beta - \wt{\beta}|\right)^2
\left(
   \int_\theta^N \lambda^{2(\beta^* + s)}(1+ |\ln \lambda |)^2 \ud \dnorm{\es \zeta}^2
\right)
\nn
\\
&\le &
C_0 \left( |\alpha - \wt{\alpha}| + |\beta - \wt{\beta}|\right)^2
N^{2\beta^*}  \ln^2 N
  \int_\theta^N \lambda^{2s}\ud \dnorm{\es \zeta}^2
\nn
\\
&\le &
C_N  \left( |\alpha - \wt{\alpha}| + |\beta - \wt{\beta}|\right)^2,
\end{eqnarray}
where $C_N=C_0\dnorm{\zeta}^2_s N^{2\beta^*} \ln^2 N$.
~
\\
~
\textit{\bf Estimate for $I_{12}(N)$.}
We note that $0\le E_\alpha(-x) \le 1$ for $x>0$. This gives
\begin{equation}
\label{i12}
I_{12}(N)
\le
\int_{\lambda>N} \lambda^{2s} \ud \dnorm{\es \zeta}^2.
\end{equation}
Substituting (\ref{i11}) and (\ref{i12}) into (\ref{i1}), we obtain
\begin{equation}
\label{es-i1}
I_1
\le
C_N (|\alpha - \wt{\alpha} | + | \beta - \wt{\beta} |)^2
   +
\int_{\lambda>N} \lambda^{2s} \ud \dnorm{\es \zeta}^2,
\end{equation}
where $C_N$ defined in (\ref{i11}).
%
%
\\
\\
\textit{ \bf Estimate for  $I_2$}.
Similarly to the proof of Lemma \ref{Step1}, we get
\begin{equation}\label{i2}
   I_2
\le
   \frac{1}{\Gamma(\wt{\alpha})}\theta^{2s-\wt{\beta}} L^2(\m)
   \int_0^t
   (t - \tau)^{\wt{\alpha} - 1} \tau^{-2\nu}\dnorm{v(\tau) - u(\tau)}_s^2
\ud \tau.
\end{equation}
%
%
\textit{\bf Estimate for  $I_3$}.
Recall the definition $Q$ which defined in (\ref{Q-define}) as follows
\[
Q_{\alpha, \beta, A}(u)(0, t)
 =
\int_0^t
  \E_{\alpha, \beta}(A, t, \tau) f(\tau, u)
\ud \tau
.
\]
By the Holder inequality and direct computation, we have
\begin{eqnarray}\label{i3}
I_3
&\le &
\int_\theta^{+\infty}
\lambda^{2s}
\int_0^t
   \left|
      \E_{\alpha, \beta}(\lambda, t, \tau)
      -
      \E_{\wt{\alpha}, \wt{\beta}}(\lambda, t, \tau)
   \right|
\ud\tau
\times
\nn
\\
&&
\times
\int_0^t
   \left|
      \E_{\alpha, \beta}(\lambda, t, \tau)
      -
      \E_{\wt{\alpha}, \wt{\beta}}(\lambda, t, \tau)
   \right|
  \ud \dnorm{\es f(\tau, u(\tau))}^2
\ud \tau
\nn
\\
&=&
I_{31}(N)+I_{32}(N),
\end{eqnarray}
where
\begin{eqnarray}
I_{31}(N)
 &=&
\int_\theta^N
\lambda^{2s}
\int_0^t
   \left|
      \E_{\alpha, \beta}(\lambda, t, \tau)
      -
      \E_{\wt{\alpha}, \wt{\beta}}(\lambda, t, \tau)
   \right|
\ud \tau
\times
\nn
\\
\label{df-i31}
&&
\times
\int_0^t
   \left|
      \E_{\alpha, \beta}(\lambda, t, \tau)
      -
      \E_{\wt{\alpha}, \wt{\beta}}(\lambda, t, \tau)
   \right|
  \ud \dnorm{\es f(\tau, u(\tau))}^2
\ud \tau ,
\nn
\\
I_{32}(N)
& =&
\int_{\lambda> N}
\lambda^{2s}
\int_0^t
   \left|
      \E_{\alpha, \beta}(\lambda, t, \tau)
      -
      \E_{\wt{\alpha}, \wt{\beta}}(\lambda, t, \tau)
   \right|
\ud\tau
\times
\nn
\\
&&
\times
\int_0^t
   \left|
      \E_{\alpha, \beta}(\lambda, t, \tau)
      -
      \E_{\wt{\alpha}, \wt{\beta}}(\lambda, t, \tau)
   \right|
  \ud \dnorm{\es f(\tau, u(\tau))}^2
\ud \tau.
\end{eqnarray}
We will estimate for $ I_{31}(N)$ and $ I_{32}(N)$ one by one.
\\
\textit{\bf Estimate for $ I_{31}(N)$.}
By Lemma \ref{trong1}, we have
\begin{equation*}
\int_0^t
   \left|
      \E_{\alpha, \beta}(\lambda, t, \tau)
      -
      \E_{\wt{\alpha}, \wt{\beta}}(\lambda, t, \tau)
   \right|
\ud\tau
\le
C_1
  \left(
    (1+\lambda^\beta) |\alpha - \wt{\alpha}|
    +
     |\lambda^\beta - \lambda^{\wt{\beta}} |
  \right).
\end{equation*}
By the mean value theorem, for $\lambda \le N$ with $N$ large enough, we obtain
\begin{eqnarray}
\label{i31-1}
\int_0^t
   \left|
      \E_{\alpha, \beta}(\lambda, t, \tau)
      -
      \E_{\wt{\alpha}, \wt{\beta}}(\lambda, t, \tau)
   \right|
\ud\tau
&\le &
C_2 \lambda^{\beta^*} |\ln \lambda|
\left( |\alpha - \wt{\alpha}|+ |\beta-\wt{\beta}| \right)
.
\end{eqnarray}
On the other hand, there exists $C_3=C_3(\alpha_*, \alpha^*, \beta_*)$ such that
\begin{eqnarray}
\label{i31-2}
\left|
  \E_{\alpha, \beta}(\lambda_k, t, \tau)
  -
  \E_{\wt{\alpha}, \wt{\beta}}(\lambda_k, t, \tau)
\right|
& \le &
C_3 \left( (t-\tau)^{\alpha-1}+(t-\tau)^{\wt{\alpha}-1}\right)
\nn
\\
& \le &
2 C_3 \left( (t-\tau)^{\alpha_*-1}+(t-\tau)^{\alpha^*-1}\right)
.
\end{eqnarray}
Plugging (\ref{i31-1}) and (\ref{i31-2}) into (\ref{df-i31}), we get that
\begin{eqnarray}
\label{i31-es1}
   I_{31}(N)
 &\le &
   C_4 N^{\beta^*+2s} \ln N
   \left( |\alpha - \wt{\alpha}|+ |\beta-\wt{\beta}| \right)
\nn
\\
&\times &
   \int_\theta^N
      \int_0^t
        \left( (t-\tau)^{\alpha^*-1}+(t-\tau)^{\alpha_*-1}\right)
        \ud \dnorm{\es f(\tau, u(\tau))}^2
     \ud \tau
\end{eqnarray}
for $N$ large enough
and $C_4=2C_2C_3$.
~
~
Furthermore, thank to the condition (\ref{local-lips}), we get that
\begin{eqnarray}
\label{sum-a-es}
\lefteqn
{
   \int_\theta^N
      \int_0^t
        \left( (t-\tau)^{\alpha^*-1}+(t-\tau)^{\alpha_*-1}\right)
        \ud \dnorm{\es f(\tau, u(\tau))}^2
     \ud \tau
}
\nn
\\
&\le &
\int_\theta^{+\infty}
\int_0^t
  \left( (t-\tau)^{\alpha^*-1}+(t-\tau)^{\alpha_*-1}\right) \ud \dnorm{\es f(\tau, u(\tau))}^2
\ud \tau
\nn
\\
&\le &
\int_0^t
  \left( (t-\tau)^{\alpha^*-1}+(t-\tau)^{\alpha_*-1}\right)
  \left(\dnorm{f(\tau, 0)}^2 + L^2(\m) \m^2  t^{-2\nu} \right)
\ud \tau
\nn
\\
&:=&
C_5,
\end{eqnarray}
where $C_5=C_5(\alpha_*, \alpha^*, \beta_*, \m)$.
Combining the inequality (\ref{i31-es1}) with (\ref{sum-a-es}),
we obtain
\begin{equation}
 \label{i31}
   I_{31}(N)
\le
   D_N
   \left(|\alpha - \wt{\alpha}| + |\beta - \wt{\beta}|\right),
 \end{equation}
where $D_N=C_4C_5 N^{\beta^*+2s} \ln N$.
\\
\textit{\bf Estimate  for $I_{32}(N)$.}
Thanks to (\ref{es-E-aa}), one has
\begin{eqnarray*}
\int_0^t
\left|
  \E_{\alpha, \beta}(\lambda, t, \tau)
 -
  \E_{\wt{\alpha}, \wt{\beta}}(\lambda, t, \tau)
\right|
\ud \tau
& \le &
\int_0^t
  \E_{\alpha, \beta}(\lambda, t, \tau)
\ud \tau
  +
\int_0^t
  \E_{\wt{\alpha}, \wt{\beta}}(\lambda, t, \tau)
\ud \tau
\\
&\le &
1/\lambda^\beta + 1/\lambda^{\wt{\beta}}.
\end{eqnarray*}
Consequently,
\[
\lambda^s \int_0^t
\left|
  \E_{\alpha, \beta}(\lambda, t, \tau)
 -
  \E_{\wt{\alpha}, \wt{\beta}}(\lambda, t, \tau)
\right|
\le C_6,
\]
where
$C_6=C_6(\beta_*, \beta^*, \theta)$,
and that
\begin{equation}
\label{i32}
I_{32}(N)
\le
C_6 \int_{\lambda>N}
\int_0^t
\left|
  \E_{\alpha, \beta}(\lambda, t, \tau)
 -
  \E_{\wt{\alpha}, \wt{\beta}}(\lambda, t, \tau)
\right|
\ud \dnorm{\es f(\tau, u(\tau))}^2
\ud\tau
\end{equation}
From (\ref{es-i1}), (\ref{i3}), (\ref{i31}) and (\ref{i32}),
for $|\alpha - \wt{\alpha} | + | \beta - \wt{\beta} |\le 1$,
we obtain
\begin{eqnarray}
\label{i1+i3}
   I_1 + I_3
&\le &
   E_N \left(|\alpha - \wt{\alpha} | + | \beta - \wt{\beta} | \right)
  +
   2 \int_{\lambda>N} \lambda^{2s} \ud \dnorm{ \es \zeta}^2
\nn
\\
&+&
   4 C_6
   \int_{\lambda>N}
   \int_0^t
   \left|
        \E_{\alpha, \beta}(\lambda, t, \tau)
       -
       \E_{\wt{\alpha}, \wt{\beta}}(\lambda, t, \tau)
    \right|
   \ud \dnorm{\es f(\tau, u(\tau))}^2
   \ud\tau
   ,
\end{eqnarray}
where $E_N=2C_N+6D_N$
with $C_N$ defined in (\ref{i11}) and $D_N$ defined in (\ref{i31}).

Let us mention (\ref{sum-a-es}) that
\begin{equation*}
\int_\theta^{+\infty}
\int_0^t
  \left( (t-\tau)^{\alpha^*-1}+(t-\tau)^{\alpha_*-1}\right) \ud \dnorm{\es f(\tau, u(\tau))}^2
\ud \tau
\le
C_5
\end{equation*}
and $\zeta \in D(A^s)$.
This leads to the fact that there exists $N=N(\epsilon)$
independent of $\alpha, \wt{\alpha}$ and $\beta, \wt{\beta}$ such that
\[
   2 \int_{\lambda>N} \lambda^{2s} \ud \dnorm{ \es \zeta}^2
+
   4 C_6
   \int_{\lambda>N}
   \int_0^t
   \left|
        \E_{\alpha, \beta}(\lambda, t, \tau)
       -
       \E_{\wt{\alpha}, \wt{\beta}}(\lambda, t, \tau)
    \right|
   \ud \dnorm{\es f(\tau, u(\tau))}^2
   \ud\tau
 <
 \epsilon.
\]
By (\ref{i1+i3}), we obtain the following estimate
\begin{equation}
\label{i-1-3}
I_1+ I_3
\le
\epsilon
+
P_\epsilon (|\alpha -\wt{\alpha}| + |\beta - \wt{\beta}|).
\end{equation}
Substituting (\ref{i2}) and (\ref{i-1-3}) into (\ref{q-al-al}), we obtain
\begin{eqnarray*}
\lefteqn
{
\dnorm{
  \F_{\zeta, \wt{\alpha}, \wt{\beta}, A}(v)(t)
   -
  \F_{\zeta, \alpha, \beta, A}(u)(t)
}_s^2
}
 \\
& \le &
4\left(\epsilon + P_\epsilon(|\alpha -\wt{\alpha}| + |\beta - \wt{\beta}|)\right)
+
\frac{4}{\Gamma(\wt{\alpha})}\theta^{2s-\wt{\beta}} L^2(\m)
\int_0^t
  (t - \tau)^{\wt{\alpha} -1} \tau^{-2\nu} \dnorm{v(\tau)-u(\tau)}^2_s
\ud \tau.
\end{eqnarray*}
Since $u_{\zeta, \wt{\alpha}, \wt{\beta}}$ and $u_{\zeta, \alpha, \beta}$ are the solution of
the equations
$\F_{\zeta, \wt{\alpha}, \wt{\beta}, A}(v)=v$ and $\F_{\zeta, \alpha, \beta, A}(u)=u$,
respectively.
By Lemma \ref{gronwall}, we conclude that
\begin{eqnarray*}
\dnorm
  {
    u_{\zeta, \wt{\alpha}, \wt{\beta}}(t)
   -
    u_{\zeta, \alpha, \beta}(t)
   }^2_s
\le
   P_0 \left(\epsilon+P_\epsilon(|\alpha -\wt{\alpha}| + |\beta - \wt{\beta}|)\right)
   E_{\wt{\alpha}-2\nu, 1-2\nu}
   \left(4L^2(\m) \theta^{2s-\wt{\beta}} t^{\wt{\alpha}-2\nu}\right),
\end{eqnarray*}
where $P_0=4 \Gamma(1-2\nu)$.
This completes the proof of Lemma \ref{Step2}.

\subsection{The proof of Lemma \ref{Step3}.}
\label{app-step3}

Analogously, the proof of Lemma \ref{Step2},
we can use Lemma \ref{gronwall} to prove that
\begin{equation*}
\dnorm
{
  u_{\zeta, \wt{\alpha}, \wt{\beta}}(t)
  -
  u_{\zeta, \alpha, \beta}(t)
}_s
\le
Q_0
\left(
   N^{-\gamma_2}
    +
   N^{\gamma_1} \ln^2 N
   \left(|\alpha - \wt{\alpha}| + |\beta - \wt{\beta} |\right)
\right)^{1/2},
\end{equation*}
where $Q_0$ is independent of $N, \, \alpha , \, \wt{\alpha}, \, \beta, \, \wt{\beta}$.
~
\\
~
Since $\ln N<N$, we obtain
\begin{equation}
\label{uv-es}
\dnorm
{
  u_{\zeta, \wt{\alpha}, \wt{\beta}}(t)
  -
  u_{\zeta, \alpha, \beta}(t)
}_s
\le
Q_0
\left(
  N^{-\gamma_2}
  +
  N^{\gamma_1+2}
  \left(|\alpha - \wt{\alpha}| + |\beta - \wt{\beta} |\right)
\right)^{1/2},
\end{equation}
Let us suppose that $|\alpha - \wt{\alpha}| + |\beta - \wt{\beta} | \le 1$,
and we can choose
$
N
=
\left[
  (|\alpha - \wt{\alpha}| + |\beta - \wt{\beta} |)^{- 1/(2(\gamma_1+\gamma_2+1))}
\right] +1
$.
It is easy to see that
$
(|\alpha - \wt{\alpha}| + |\beta - \wt{\beta} |)^{- 1/(\gamma_1+\gamma_2++2)}
<
N
\le
2 (|\alpha - \wt{\alpha}| + |\beta - \wt{\beta} |)^{-1/(\gamma_1+\gamma_2+2)}
$.
Hence, by (\ref{uv-es}), we obtain
\begin{equation*}
\dnorm
{
  u_{\zeta, \wt{\alpha}, \wt{\beta}}(t)
  -
  u_{\zeta, \alpha, \beta}(t)
}_s
\le
Q_0 \left( 2^{\gamma_1+2} +1 \right)
\left(
    |\alpha - \wt{\alpha}| + |\beta - \wt{\beta} |
\right)^{\gamma_2/\left(2(\gamma_1+\gamma_2+2)\right)}.
\end{equation*}
~
~
This completed the proof of Lemma \ref{Step3}.

\subsection{The proof of Lemma \ref{QG-es}.}
\label{app-QG-es}

{\bf Proof of part (1).}
By Lemma \ref{Q-lemma}, we have
\begin{eqnarray*}
Q_{\alpha, \beta, A}(w_1)(t) -Q_{\alpha, \beta, A}(w_2)(t)
=
\int_0^t
  \E_{\alpha, \beta}(A, t, \tau)
  \left(f(\tau, w_1) -f(\tau, w_2)\right)
\ud \tau
.
\end{eqnarray*}
Using (\ref{Q-es}), one has
\begin{eqnarray}
\label{H-es1-nonlinear}
\| Q_{\alpha, \beta, A}(w_1)(t) -Q_{\alpha, \beta, A}(w_2)(t)\|_s^2
& \le &
\frac{\theta^{2s-\beta}}{\Gamma(\alpha)}
\int_0^t
  (t-\tau)^{\alpha - 1} \tau^{-2\nu}
  \| w_1(\tau)-w_2(\tau)\|^2
\ud \tau.
\nn
\\
&\le &
\kappa^2\frac{\theta^{2s-\beta}}{\Gamma(\alpha)} B(\alpha, 1-2\rho-2\nu)\tnorm{w_1-w_2)}_{s, \rho}^2
t^{\alpha-2\rho-2\nu}
\nn
\\
&=&
\kappa^2 \theta^{2s-\beta} E_0
\tnorm{w_1-w_2)}_{s, \rho}^2
t^{\alpha-2\rho-2\nu}
,
\end{eqnarray}
where $E_0=B(\alpha, 1-2\rho-2\nu)/\Gamma(\alpha)$.
The latter inequality deduces the result of part (1).
\\
~
~
{\bf Proof of part (2).}
By triangle inequality, we have
\begin{eqnarray*}
\dnorm
{
 G_{\varphi, \alpha, \beta, A} (w_1) - G_{\wt{\varphi}, \alpha, \beta, A} (w_2)
}_s
 &\le &
  \dnorm{\varphi - \wt{\varphi}}_s
   +
  \dnorm{Q_{\alpha, \beta, A}(w_1)(T) - Q_{\alpha, \beta, A}(w_2)(T)}_s
  .
\end{eqnarray*}
Hence, thank to (\ref{H-es1-nonlinear}), we obtain the result of part (2).
~
\\
~
{\bf Proof of part (3).}
Let us mention the Lemma \ref{Q-lemma} that
\begin{equation*}
\dnorm
{
 Q_{\alpha, \beta, A}(w_1)(t)
}_s^2
\le
\frac{\theta^{2s-\beta}}{\Gamma(\alpha)}
\int_0^t  (t-\tau)^{\alpha - 1} \| f(\tau, w_1)\|^2\ud \tau
,
\end{equation*}
By {\bf Assumption F1}, we have
\begin{eqnarray*}
\dnorm{f(t, w_1)}
\le
\kappa t^{-\nu} \dnorm{w_1(t)}_s + \dnorm{f(t, 0)}
.
\end{eqnarray*}
Thus,
\begin{eqnarray*}
\| Q_{\alpha, \beta, A}(w_1)(t) \|_s^2
&\le &
\frac{2\theta^{2s-\beta}}{\Gamma(\alpha)}
\int_0^t
  (t-\tau)^{\alpha - 1}
  \left( \tau^{-2\nu} \kappa^2 \dnorm{w_1(\tau)}_s^2
  +
  \dnorm{f(\tau, 0)}^2\right)
\ud \tau
\\
& \le &
2\theta^{2s-\beta}
\left(
\frac{\Theta_\alpha(t)}{\Gamma(\alpha)}
+
\kappa^2 E_0
\tnorm{w_1}_{s,\rho}^2 t^{\alpha-2\rho-2\nu}
\right)
,
\end{eqnarray*}
where
$
\Theta_\alpha(t)
=
\int_0^t (t-\tau)^{\alpha - 1} \dnorm{f(\tau, 0)}^2 \ud \tau
$.
This completes the proof of part (3).
~
\\
~
{\bf Proof of part (4).}
We have
\begin{equation*}
\dnorm{G_{\varphi, \alpha, \beta, A}(w_1)}_s^2
\le
2
\left(
   \| \varphi\|_s^2
   +
   \dnorm{Q_{\alpha, \beta, A}(w_1)(T)}_s^2
\right)
    .
\end{equation*}
Hence, using the result of part (3), we obtain the desired result of part (4).
~
\\
~
{\bf Proof of part (5).}
Since
\begin{equation*}
\dnorm{G_{\varphi, \alpha, \beta, A}(w_1)}_{s+r}^2
\le
2
\left(
   \| \varphi\|_{s+r}^2
   +
   \dnorm{Q_{\alpha, \beta, A}(w_1)(T)}_{s+r}^2
\right)
    .
\end{equation*}
Then by Lemma \ref{Q-lemma} and the assumption $\varphi \in D\left(A^{s+r} \right)$
we obtain the results of part (5).
~
~
This completes the proof of the Lemma \ref{QG-es}.

\subsection{The proof of Lemma \ref{Step1-2}.}
\label{app-Step1-2}

Put
$
\gamma
=
\min\{\beta/2, \, \wt{\beta}/2\}
$.
We observe that
\begin{equation*}
\cq_{\wt{\varphi}, \wt{\alpha}, \wt{\beta}, A}(w)(t)
-
\cq_{\varphi, \wt{\alpha}, \wt{\beta}, A}(v)(t)
=
\left[
   \cq_{\wt{\varphi}, \wt{\alpha}, \wt{\beta}, A}(w)(t)
   -
   \cq_{\wt{\varphi}, \wt{\alpha}, \wt{\beta}, A}(v)(t)
\right]
+
P_{\alpha, \beta}(A, t) \left(\wt{\varphi}-\varphi \right)
.
\end{equation*}
Similarly (\ref{Q-w12}), we have
\begin{eqnarray}
\label{Q-w-v}
\tnorm
{
   \cq_{\wt{\varphi}, \wt{\alpha}, \wt{\beta}, A}(w)
    -
   \cq_{\varphi, \wt{\alpha}, \wt{\beta}, A}(v)
}_{\gamma, \, \rho}
\le
\kappa/K_m \tnorm{w - v}_{\gamma, \, \rho}
+
\tnorm
{
P_{\alpha, \beta}(A, t) \left(\wt{\varphi}-\varphi \right)
}_{\gamma, \, \rho}.
\end{eqnarray}
By (\ref{P-es}), we can estimate for second term in the right--hand side as follow
\begin{eqnarray*}
\tnorm
{
P_{\alpha, \beta}(A, t) \left(\wt{\varphi}-\varphi \right)
}^2_{\gamma, \, \rho}
&\le &
E^2 T^{2\rho}
\int_\theta^{+\infty}
  \lambda^{2 \gamma}
  \ud \dnorm{\es \left(\wt{\varphi}-\varphi\right)}^2
\\
& \le &
E^2 T^{2\rho} \max\left\{1, \, \theta^{2\gamma-2r-\beta^*}\right\}
\int_\theta^{+\infty}
  \lambda^{\beta^*+2r}
  \ud \dnorm{\es \left(\wt{\varphi}-\varphi\right)}^2
\\
&=&
E^2 T^{2\rho} \max\left\{1, \, \theta^{2\gamma-2r-\beta^*}\right\}
\| \wt{\varphi} - \varphi\|^2_{\beta^*/2+r}.
\end{eqnarray*}
Since
$
u_{\wt{\varphi}, \wt{\alpha}, \wt{\beta}},
u_{\varphi, \wt{\alpha}, \wt{\beta}}
$
are solutions
of the equations
$
\cq_{\wt{\varphi}, \wt{\alpha}, \wt{\beta}, A}(w)=w,
\,
\cq_{\varphi, \wt{\alpha}, \wt{\beta}, A}(v)=v
$,
respectively.
Substituting the last inequality into (\ref{Q-w-v}), we obtain
\[
\tnorm
{
   u_{\wt{\varphi}, \wt{\alpha}, \wt{\beta}}
   -
   u_{\varphi, \wt{\alpha}, \wt{\beta}}
}_{\gamma, \, \rho}
 \le
\left(1-\kappa/K_m\right)^{-1}
E T^{\rho} \max\left\{1, \, \theta^{\gamma-r-\beta^*/2}\right\}
\| \wt{\varphi} - \varphi\|_{\beta^*/2+r}
.
\]
This completed the proof of Lemma \ref{Step1-2}.

\subsection{The proof of Lemma \ref{Step2-2}.}
\label{app-Step2-2}

Put $\gamma=\min\{ \beta/2, \wt{\beta}/2\}$.
By direct computation, we have
\begin{eqnarray}
\label{Q-es-al-be}
\lefteqn
{
\tnorm
{
  \cq_{\varphi, \wt{\alpha}, \wt{\beta}, A}(v)
  -
  \cq_{\varphi, \alpha, \beta, A}(u)
}_{\gamma, \, \rho}
}
\nn
\\
& \le &
\tnorm
{
  \cq_{\varphi, \wt{\alpha}, \wt{\beta}, A}(v)
  -
  \cq_{\varphi, \wt{\alpha}, \wt{\beta}, A}(u)
}_{\gamma, \, \rho}
 +
\tnorm
{
  \cq_{\varphi, \wt{\alpha}, \wt{\beta}, A}(u)
  -
  \cq_{\varphi, \alpha, \beta, A}(u)
}_{\gamma, \, \rho}
\nn
 \\
&\le &
\kappa/K_m \tnorm{v - u}_{\gamma, \, \rho}
 +
\tnorm
{
  \cq_{\varphi, \wt{\alpha}, \wt{\beta}, A}(u)
  -
  \cq_{\varphi, \alpha, \beta, A}(u)
}_{\gamma, \, \rho}
.
\end{eqnarray}
To get the desired result, we find an estimation for the second term in the last line.
By the triangle inequality and direct computation, we have
\begin{equation}
\label{Qw-es}
\tnorm
{
  \cq_{\varphi, \wt{\alpha}, \wt{\beta}, A}(u)
  -
  \cq_{\varphi, \alpha, \beta, A}(u)
}_{\gamma, \, \rho}
\le
J_1+J_2+J_3
    ,
\end{equation}
where
\begin{eqnarray*}
J_1
& = &
\tnorm
{
  \left[
     P_{\wt{\alpha}, \wt{\beta}}(A, t)
     -
     P_{\alpha, \beta}(A, t)
   \right]
G_{\varphi, \alpha, \beta, A}(u)
}_{\gamma, \, \rho},
\\
J_2
& = &
\tnorm
{
   P_{\alpha, \beta}(A, t)
    \left[
       Q_{\wt{\alpha}, \wt{\beta}, A}(u)(T)
       -
       Q_{\alpha, \beta, A}(u)(T)
     \right]
}_{\gamma, \, \rho}
\nn
\\
J_3
&=&
\tnorm
{
  \left[
     Q_{\wt{\alpha}, \wt{\beta}, A}(u)(t)
     -
     Q_{\alpha, \beta, A}(u)(t)
   \right]
}_{\gamma, \, \rho}
   ,
\end{eqnarray*}
and function $Q$ defined in (\ref{solution-bw-nonlinear}).

Now we find estimates for $J_k (k=1,2)$.

\textit{\bf Estimate for  $J_1$.}
For $N>\theta$, it is easy to see that
\begin{equation}
\label{J1}
 J_1^2
 \le
 J_{11}(N) + J_{12}(N),
\end{equation}
 where
\begin{eqnarray*}
J_{11}(N)
 & = &
\sup_{t \in (0, T]}
t^{2\rho}
\int_\theta^N
   \lambda^{2\gamma}
   \left|
     P_{\wt{\alpha}, \wt{\beta}}(\lambda, t)
     -
     P_{\alpha, \beta}(\lambda, t)
   \right|^2
  \ud \dnorm{\es G_{\varphi, \alpha, \beta, A}(u)}^2,
\, \,
\\
J_{12}(N)
& = &
\sup_{t \in (0, T]}
t^{2\rho}
\int_{\lambda>N}
  \lambda^{2\gamma}
  \left|
     P_{\wt{\alpha}, \wt{\beta}}(\lambda, t)
     -
     P_{\alpha, \beta}(\lambda, t)
   \right|^2
  \ud \dnorm{\es G_{\varphi, \alpha, \beta, A}(u)}^2.
\end{eqnarray*}
By part (b) of Lemma \ref{trong1}, we can find $C_1, C_2$ dependent only on
$\alpha_*, \alpha^*$ such that
\[
\frac{C_{11}}{\lambda}
\le
E_a(-\lambda)
\le
\frac{C_{12}}{\lambda},
\]
for any $a\in [\alpha_*, \alpha^*]$ and for every $\lambda>0$.
Hence, by part (c) of Lemma \ref{trong1} and (\ref{P-es}),
we have
\begin{eqnarray*}
\left| P_{\wt{\alpha}, \wt{\beta}}(\lambda, t) - P_{\alpha, \beta}(\lambda, t) \right|
&= &
\frac
{
   E_{\alpha} \big(- \lambda^\beta t^\alpha \big)
   \snorm
    {
       E_{\wt{\alpha}} \big(- \lambda^{\wt{\beta}} T^{\wt{\alpha}}\big)
       -
       E_{\alpha} \big(- \lambda^\beta T^\alpha\big)
    }
}
{
   E_{\alpha} \big(- \lambda^\beta T^\alpha \big)
   E_{\wt{\alpha}} \big(- \lambda^{\wt{\beta}} T^{\wt{\alpha}} \big)
}
\\
&+&
\frac
{
   \snorm
   {
       E_{\wt{\alpha}} \big(- \lambda^{\wt{\beta}} t^{\wt{\alpha}}\big)
       -
      E_{\alpha} \big(- \lambda^\beta t^\alpha\big)
    }
}
{
E_{\wt{\alpha}} \big(- \lambda^{\wt{\beta}} T^{\wt{\alpha}} \big)
}
\\
 & \le &
  M
  \left(
     T^\alpha /t^\alpha
     +
     1
  \right)
  \lambda^{2\beta^*} ( \ln \lambda + 1 )
  \left(|\alpha - \wt{\alpha}| + | \beta - \wt{\beta}|\right)
\\
& \le &
  4 M
  \left( T^\alpha/t^\alpha \right)
  N^{2\beta^*} \ln N
  \left(|\alpha - \wt{\alpha}| + | \beta - \wt{\beta}|\right)
\end{eqnarray*}
for any $\lambda \le N$ and for $N$ large enough, where $M=M(\alpha_*, \alpha^*, \beta_*, \beta^*)$.
Since $u \in C_{\gamma, \, \rho}(T)$ and has an upper bound,
we can use part (4) of Lemma \ref{QG-es} and (\ref{F-upper}) to get  the following estimate
\begin{eqnarray}
\label{J11-es}
   J_{11}(N)
&\le &
   16 M^2 T^{2\rho}
   N^{4\beta^*} \ln^2 \lambda_N
   \left(|\alpha - \wt{\alpha}| + | \beta - \wt{\beta}|\right)^2
   \dnorm{G_{\varphi, \alpha, \beta, A}(u)}^2_{\gamma}
\nn
\\
& \le &
   M_1^2
   N^{4\beta^*} \ln^2 N
   \left(|\alpha - \wt{\alpha}| + | \beta - \wt{\beta}|\right)^2
,
\end{eqnarray}
where $M_0$, $M_1$ only depend on $T$, $\alpha_*, \alpha^*$, $\beta_*, \beta^*$, $\theta$. On the other hand, we have
\[
\snorm{P_{\wt{\alpha}, \wt{\beta}}(\lambda, t) - P_{\alpha, \beta}(\lambda, t)}
   \le
\snorm{P_{\wt{\alpha}, \wt{\beta}}(\lambda, t)}
+
\snorm{P_{\alpha, \beta}(\lambda, t)}
\le
M \left( T^\alpha/t^\alpha+ T^{\wt{\alpha}}/t^{\wt{\alpha}} \right)
\le
2 M  T^{\alpha^*}/t^{\alpha^*}
.
\]
Consequently, for $N$ large enough,
we have
\begin{eqnarray}
\label{j12-N}
   J_{12}(N)
&\le &
   4 M^2 T^{2\rho}
   \int_{\lambda>N}
   \lambda^{2\gamma}
   \ud \dnorm{\es G_{\varphi, \alpha, \beta, A}(u)}^2
   .
\end{eqnarray}
Since $G_{\varphi, \alpha, \beta, A}(u) \in D(A^\gamma)$,
and by part (4) of Lemma \ref{QG-es} and (\ref{F-upper}),
there exists $N_1>0$ independent of $\gamma$ such that
\begin{equation}
\label{J12-es}
   J_{12}(N) \le \epsilon^2/4
\end{equation}
for any $N \ge N_1$.
Substituting (\ref{J11-es}) and (\ref{J12-es}) into (\ref{J1}),
we obtain
\begin{equation}
\label{J1-es}
   J_1
\le
   M_1
   N^{2\beta^*} \ln N
   \left(|\alpha - \wt{\alpha}| + | \beta - \wt{\beta}|\right)
+
   \epsilon/2
.
\end{equation}
\textit{\bf Estimate for  $J_2$.}
Applying (\ref{P-es}), we have
$
   P_{\alpha, \beta}(\lambda, t)
\le
   E T^\alpha/t^\alpha
\le
   E T^{\rho}/t^{\rho}
$.
This implies
\begin{eqnarray*}
J_2^2
&=&
\tnorm
{
  P_{\alpha, \beta}(A, t)
  \left[
     Q_{\wt{\alpha}, \wt{\beta}, A}(u)(T)
     -
     Q_{\alpha, \beta, A}(u)(T)
  \right]
}_{\gamma, \, \rho}^2
\nn
\\
&\le &
E^2 T^{2\rho}
  \dnorm
  {
    Q_{\wt{\alpha}, \wt{\beta}, A}(u)(T)
    -
    Q_{\alpha, \beta, A}(u)(T)
  }_{\gamma}^2
\nn
\\
&\le &
E^2 J_3^2.
\end{eqnarray*}
The latter inequality lead to
\[
  J_2+J_3 \le (E+1)J_3.
\]
Therefore, we have to find the estimation for $J_3$.
~
~
By the same method that used to estimate for $I_3$ in the proof of Lemma \ref{app-step2},
we can easy to prove that, there exists $N_2=N_2(\epsilon)$ such that for $N \ge N_2$ such that
\begin{equation*}
J_{3}
\le
D_1 N^{\beta^*} \ln N
\left(
    |\alpha - \wt{\alpha}| +|\beta - \wt{\beta}|
\right)^{1/2}
+
\epsilon/(2(E+1))
,
\end{equation*}
where $D_1$ is independent of $\alpha, \, \wt{\alpha}, \, \beta, \, \wt{\beta}$.
From last two  inequalities, we obtain
\[
J_2+J_3
\le
D_2
N^{\beta^*} \ln N
\left(
    |\alpha - \wt{\alpha}| +|\beta - \wt{\beta}|
\right)
+
\epsilon/2
\]
for any $N \ge N_2$.
Here
$D_2$ is independent of $\alpha, \, \wt{\alpha}, \, \beta, \, \wt{\beta} \, \epsilon$.
We substitute the last inequality and (\ref{J1-es}) into (\ref{Qw-es}),
to  obtain
\[
\tnorm
{
  \cq_{\varphi, \wt{\alpha}, \wt{\beta}, A}(u)
  -
  \cq_{\varphi, \alpha, \beta, A}(u)
}_{\gamma, \, \rho}
\le
\epsilon
+
D N^{2 \beta^*} \ln N
\left(
    |\alpha - \wt{\alpha}| +|\beta - \wt{\beta}|
\right)^{1/2}
.
\]
for any $N \ge \max\{N_1, N_2\}$.
~
Combining the latter inequality with (\ref{Q-es-al-be}),
we get the desired result of Lemma \ref{Step2-2}.

\subsection{The proof of Lemma \ref{Step3-2}.}
\label{app-Step3-2}
Since $f(t, w) \in D(A^r)$, we can use part (5) of Lemma \ref{QG-es} to find an estimation
for (\ref{j12-N}) as follow
\begin{eqnarray}
\label{es-J12-N2}
J_{12}(N)
& \le &
   4 M^2 T^{2\rho}
   \int_{\lambda>N}
   \lambda^{2\gamma}
   \ud \dnorm{\es G_{\varphi, \alpha, \beta, A}(u)}^2
\nn
\\
&\le &
   4 M^2 T^{2\rho} N^{-2r}
   \int_{\lambda>N}
   \lambda^{2(\gamma+r)}
   \ud \dnorm{\es G_{\varphi, \alpha, \beta, A}(u)}^2
\nn
\\
&\le &
   E_1^2 N^{-2r}
   ,
\end{eqnarray}
where
$
   E_1
=
   2 M T^\rho
    \dnorm{G_{\varphi, \alpha, \beta, A}(u)}_{\gamma+r}
$.
We can repeat the proof of Lemma (\ref{Step2-2})
and use the estimation (\ref{es-J12-N2}) to obtain the result desired.

\section*{Acknowledgements}
 This research is funded by Vietnam National Foundation for Science and Technology Development (NAFOSTED) under grant number 101.02-2019.321.

\end{document}